\documentclass[reqno]{amsart}
\usepackage[utf8]{inputenc}
\usepackage{xcolor}
\usepackage{units}
\usepackage{url}
\usepackage{enumitem}
\usepackage{dsfont}
\usepackage{amstext}
\usepackage{amsthm}
\usepackage{amssymb}
\usepackage{esint}
\PassOptionsToPackage{normalem}{ulem}
\usepackage{ulem}
\usepackage[pdfusetitle,
 bookmarks=true,bookmarksnumbered=false,bookmarksopen=false,
 breaklinks=false,pdfborder={0 0 0},pdfborderstyle={},backref=page,colorlinks=true]
 {hyperref}
\hypersetup{
 pdfborderstyle=,pdfborderstyle={}}

\makeatletter

\providecolor{lyxadded}{rgb}{0,0,1}
\providecolor{lyxdeleted}{rgb}{1,0,0}
\DeclareRobustCommand{\mklyxadded}[1]{\textcolor{lyxadded}\bgroup#1\egroup}
\DeclareRobustCommand{\mklyxdeleted}[1]{\textcolor{lyxdeleted}\bgroup\mklyxsout{#1}\egroup}
\DeclareRobustCommand{\mklyxsout}[1]{\ifx\\#1\else\sout{#1}\fi}
\numberwithin{equation}{section}
\numberwithin{figure}{section}
\theoremstyle{plain}
\newtheorem{thm}{\protect\theoremname}
\theoremstyle{definition}
\newtheorem{defn}[thm]{\protect\definitionname}
\theoremstyle{plain}
\newtheorem{lem}[thm]{\protect\lemmaname}
\theoremstyle{remark}
\newtheorem{rem}[thm]{\protect\remarkname}
\theoremstyle{definition}
\newtheorem{example}[thm]{\protect\examplename}
\theoremstyle{plain}
\newtheorem{cor}[thm]{\protect\corollaryname}

\usepackage{pdfsync}
\usepackage{graphics}
\usepackage{epstopdf}
\usepackage[all]{xy}
\usepackage{amscd}
\usepackage{enumitem}
\usepackage{mathtools}
\usepackage{stmaryrd}
\usepackage[utf8]{inputenc}
\setlist[enumerate]{leftmargin=*,label=(\roman*),align=left}

\makeatletter
\@namedef{subjclassname@2020}{%
  \textup{2020} Mathematics Subject Classification}
\makeatother

\newcommand{\xyR}[1]{ \makeatletter
\xydef@\xymatrixrowsep@{#1} \makeatother} % end of \xyR
\newcommand{\xyC}[1]{ \makeatletter
\xydef@\xymatrixcolsep@{#1} \makeatother} % end of \xyC
\entrymodifiers={++[ ][F]} % to have more space around objects in diagrams

\newcommand{\ra}{\longrightarrow}
\newcommand{\field}[1]{\mathbb{#1}}
\newcommand{\R}{\field{R}} % reals
\newcommand{\N}{\field{N}} % naturals
\newcommand{\CC}{\field{C}} % naturals
\newcommand{\eps}{\varepsilon} % for the sake of brevity only
\renewcommand{\phi}{\varphi}
\newcommand{\diff}[1]{\ifmmode\mathchoice{\hbox{\rm d}#1}  % displaystyle
 {\hbox{\rm d}#1}  % normal 
 {\scalebox{0.75}{$\hbox{\rm d}#1$}}  % scriptstyle 
 {\scalebox{0.35}{$\hbox{\rm d}#1$}}  % scriptscriptstyle
 \fi} % dt,dx,... for integrals
\newcommand{\abs}[2][\empty]{\ifx#1\empty\left|#2\right|%
\else#1\vert #2 #1\vert\fi}% optional arg=size
\newcommand{\Rtil}{\widetilde \R} % real Colombeau generalized number
\newcommand{\Ctil}{\widetilde \CC} % complex Colombeau generalized number
 
\newcommand{\Eball}{B^{{\scriptscriptstyle \text{\rm E}}}} % ordinary Euclidean ball
\newcommand{\frontRise}[2]{\ifmmode\mathchoice{{\vphantom{#1}}^{\scalebox{0.6}{$#2$}}}  % displaystyle
 {{\vphantom{#1}}^{\scalebox{0.56}{$#2$}}}  % normal 
 {{\vphantom{#1}}^{\scalebox{0.47}{$#2$}}}  % scriptstyle 
 {{\vphantom{#1}}^{\scalebox{0.35}{$#2$}}}\fi} % scriptscriptstyle 
\newcommand{\RCreal}[1]{\frontRise{\R}{#1}\Rtil}
\newcommand{\RCrealinfty}[1]{\frontRise{\R}{#1}\overline{\R}}
\newcommand{\RCcomplex}[1]{\frontRise{\CC}{#1}\Ctil}

\newcommand{\RCrealrho}{\RCreal{\rho}}
\newcommand{\RCcomplexrho}{\RCcomplex{\rho}}

\newcommand{\hyperN}[1]{	\frontRise{\N}{#1}\widetilde{\N}}
\newcommand{\hypNr}{\hyperN{\rho}}
\newcommand{\hypNs}{\hyperN{\sigma}}
\newcommand{\frontRiseDown}[3]{\ifmmode\mathchoice{{\vphantom{#1}}^{\scalebox{0.6}{$#2$}}_{\scalebox{0.6}{$#3$}}}  % displaystyle
 {{\vphantom{#1}}^{\scalebox{0.56}{$#2$}}_{\scalebox{0.56}{$#3$}}}  % normal 
 {{\vphantom{#1}}^{\scalebox{0.47}{$#2$}}_{\scalebox{0.47}{$#3$}}}  % scriptstyle 
 {{\vphantom{#1}}^{\scalebox{0.35}{$#2$}}_{\scalebox{0.35}{$#3$}}}\fi} % scriptscriptstyle 
\newcommand{\RCrealud}[2]{\frontRiseDown{\R}{#1}{#2}\Rtil}
\newcommand{\RCcomplexud}[2]{\frontRiseDown{\CC}{#1}{#2}\Ctil}

\newcommand{\hyperlimarg}[3]{\mathchoice{\frontRise{\lim}{\raisebox{-0.05em}{$#1\hspace{-0.67em}$}}\lim_{#3\in \hyperN{#2}\,}}
{\frontRise{\lim}{#1\hspace{-0.25em}}\lim_{#3\in \hyperN{#2}\,}}
{\frontRise{\lim}{#1\hspace{-0.25em}}\lim_{#3\in \hyperN{#2}\,}}
{\frontRise{\lim}{#1\hspace{-0.25em}}\lim_{#3\in \hyperN{#2}\,}}}
\newcommand{\hyperlim}[2]{\hyperlimarg{#1}{#2}{n}}

\newcommand{\hyplimsarg}[2]{\mathchoice{\frontRise{\lim}{\raisebox{-0.05em}{$\hspace{-0.67em}$}}\lim_{#2\in \hyperN{#1}\,}}
{\frontRise{\lim}{\hspace{-0.25em}}\lim_{#2\in \hyperN{#1}\,}}
{\frontRise{\lim}{\hspace{-0.25em}}\lim_{#2\in \hyperN{#1}\,}}
{\frontRise{\lim}{\hspace{-0.25em}}\lim_{#2\in \hyperN{#1}\,}}}

\newcommand{\gsf}{\frontRise{\mathcal{G}}{\rho}\mathcal{GC}^{\infty}}
\newcommand{\gsfk}[1]{\frontRise{\mathcal{G}}{\rho}\mathcal{GC}^{#1}}

\newcommand{\ghf}{\frontRise{\mathcal{G}}{\rho}\mathcal{GH}}

\newcommand{\hypersumarg}[3]{\mathchoice{\frontRise{\sum}{\raisebox{-0.2em}{$#1\hspace{-0.67em}$}}\sum_{#3\in \hyperN{#2}\,}}
{\frontRise{\sum}{#1\hspace{-0.25em}}\sum_{#3\in \hyperN{#2}\,}}
{\frontRise{\sum}{#1\hspace{-0.25em}}\sum_{#3\in \hyperN{#2}\,}}
{\frontRise{\sum}{#1\hspace{-0.25em}}\sum_{#3\in \hyperN{#2}\,}}}

\newcommand{\hypersum}[2]{\hypersumarg{#1}{#2}{n}}

\newcommand{\sbpt}[1]{#1_{\text{\rm s}}}

\newcommand{\Dgsf}{\frontRise{\mathcal{D}}{\rho}\mathcal{GD}}

\newcommand{\setconv}[2]{\frontRiseDown{\text{\rm c}}{#1}{#2}\text{\rm conv}}

\makeatother

\providecommand{\corollaryname}{Corollary}
\providecommand{\definitionname}{Definition}
\providecommand{\examplename}{Example}
\providecommand{\lemmaname}{Lemma}
\providecommand{\remarkname}{Remark}
\providecommand{\theoremname}{Theorem}

\begin{document}
\title[Complex hyper-power series and generalized analytic functions]{Complex hyper-power series and generalized complex analytic functions}
\author{Sekar Nugraheni \and Paolo Giordano}
\address{\textsc{Sekar Nugraheni \newline Faculty of Mathematics, University
of Vienna, Austria \newline Faculty of Mathematics and Natural Sciences,
Universitas Gadjah Mada, Indonesia}}
\email{sekar.nugraheni@ugm.ac.id}
\address{\textsc{Paolo Giordano \newline Faculty of Mathematics, University
of Vienna, Austria}}
\email{paolo.giordano@univie.ac.at}
\subjclass[2020]{46F-XX, 46F30, 26E30}
\keywords{Nonlinear analysis of generalized functions, generalized functions
of a complex variable, non-Archimedean analysis}
\begin{abstract}
This paper studies the equivalence between generalized ho\-lo\-mor\-phic
functions (GHF) and complex analytic functions in the framework of
Robinson-Colombeau generalized numbers. In every non-Archimedean ring,
the use of ordinary series is severely restricted by the topological
property that a series converges (in a topology of infinitesimal neighborhoods)
if and only if its general term is infinitesimal. Consequently, classical
Taylor series representations for generalized functions are limited
to infinitesimal neighborhoods. To overcome this drawback, we introduce
and develop the theory of hyperpower series, defined by summation
over the set of hyperfinite natural numbers. We establish the foundational
algebraic and topological properties of hyperpower series, including
their radii of convergence and sets of convergence. Building on this,
we define generalized complex analytic functions and extend several
fundamental theorems of complex analysis to the GHF setting, specifically,
providing generalizations of Goursat’s theorem, Liouville’s theorem,
the identity theorem, and a Paley-Wiener type theorem.
\end{abstract}

\maketitle

\section{\label{sec:Introduction}Introduction}

A major theorem in complex analysis states that all holomorphic functions
are complex analytic functions, and vice versa. GHF theory, see \cite{NuGi24a,NuGi24b},
is likewise linked to hyperseries and to topological properties of
spaces of GHF, providing a framework that may lead to a Cauchy-Kowalevski
theorem for GHF (cf.~\cite{Aragona2005,Tiwari2023} for details about
series in the Colombeau generalized setting and for hyperseries in
the real Robinson-Colombeau generalized setting). However, the appropriate
notion to deal with continuity of this class of generalized functions
is the sharp topology (see \cite{Mukhammadiev2021,Tiwari2022}). In
particular, this topology on the ring $\RCrealrho$ of Robinson-Colombeau
generalized numbers has to deal with balls having infinitesimal radius
$h\in\RCrealrho_{>0}$, and thus $\frac{1}{n}\not\rightarrow0$ if
$n\rightarrow+\infty$, $n\in\N$, because we never have $\frac{1}{n}<h$
if $h$ is infinitesimal. Moreover, if $m\in\N$, $n\in\N_{\leq m}$,
$h\in\RCrealrho_{>0}$ is an infinitesimal number, and $|x_{k+1}-x_{k}|\leq h^{2}$,
$\forall k\in\N$ such that $n\le k\le m$, then we have 
\begin{equation}
|x_{m}-x_{n}|\leq|x_{m}-x_{m-1}|+\dots+|x_{n+1}-x_{n}|\leq(m-n)h^{2}<h,\label{eq:cauchy}
\end{equation}
because $h$ is infinitesimal and $m-n$ is finite. This implies that
$(x_{n})_{n\in\N}\in\RCrealrho^{\N}$ is a Cauchy sequence if and
only if
\[
\lim_{n\to+\infty}\left|x_{n+1}-x_{n}\right|=0.
\]
As a consequence, a series of Robinson-Colombeau generalized numbers
\begin{equation}
\sum_{k=0}^{\infty}a_{k}\text{ converges}\ \iff\ a_{k}\to0\text{ (in the sharp topology)}.\label{eq:convNonArch}
\end{equation}
Moreover, once again from \eqref{eq:cauchy}, it follows that $a_{k}\approx0$
is infinitesimal for all $k\in\N$ sufficiently large.

As a consequence, even if in \cite[Prop. 4.18]{Vernaeve2008}, it
is proved that each complex analytic Colombeau generalized function
can be written as a Taylor series, necessarily this result holds only
in an infinitesimal neighborhood of each point. For example, if the
Taylor series $\sum_{k\in\N}\frac{z^{k}}{k!}$ converges, then necessarily
$a_{k}:=\frac{\left|z^{k}\right|}{k!}\approx0$ for $k\in\N$ sufficiently
large. Therefore, $\left|z\right|=\left(k!a_{k}\right)^{1/k}\approx0$
because $k\in\N$ is finite. Note that the last step would not hold
anymore if $k\in\RCrealrho$ is an infinite number. The impossibility
to maintain this property in a finite neighborhood is a general drawback
of using ordinary series in a (Cauchy complete) non-Archimedean framework.

In this framework, specifically within the non-Archimedean ring $\RCrealrho$,
the hyper-power series $\hypersum{}{\rho}a_{n}(x-c)^{n}$, is defined
as summation over the set of hyperfinite natural numbers $\hypNr\subset\RCrealrho$
with respect to the gauge $\rho=(\rho_{\eps})\ra0$.

In this article, we restrict our focus to a single gauge, in contrast
to the more general definitions of hyper-power series $\hypersum{\rho}{\sigma}a_{n}(x-c)^{n}$
depending on two gauges $\rho$ and $\sigma$, as given in \cite{Tiwari2023,Tiwari2022}.
In spite of this simplification, we are still able to prove that for
GHF, holomorphicity is equivalent to generalized complex analyticity.
Indeed, the key idea is clearly to reduce the necessary computations
to the case of geometric hyperseries, which require only one gauge
(see Section~\ref{sec:gcaf}).

The structure of the paper is as follow. We start with an introduction
to hyper-power series as well as its basic properties such as its
algebraic properties, the definition of radius of convergence, and
the topological properties of the set of convergence. After presenting
the basic definition of generalized complex analytic functions and
its properties, we extend some classical theorems such as Goursat's
theorem, Liouville's theorem, the identity theorem, and a Paley-Wiener
type theorem. Although we developed an expanded mathematical framework
that generalizes classical theory to a broader class of functions,
we frequently have been able to preserve the fundamental intuition
of the classical approach. Hence, often the classical proofs of these
underlying properties can be directly adapted to our framework.

For the general plan of this series of publications devoted to GHF,
see the introduction of \cite{NuGi24a}. We refer to \cite{Giordano2021}
for notions such as the ring of Robinson-Colombeau $\RCcomplexrho$,
the sharp topology, internal sets, and strong internal set, \cite{NuGi24a}
for the notion of basic functions, limits, little-oh, and GHF, and
\cite{NuGi24b} for notions such as generalized $\mathcal{C}^{k}$-functions,
paths, and path integrals as well as their notations and properties.

The work only requires \cite{NuGi24a,NuGi24b} as prior knowledge,
in the sense that it includes all the statements required in the demonstrations
and, of course, the relevant citations.

\section{\label{sec:hyperpowerseries}Hyper-power series}

\subsection{Hypernatural numbers and hypersequences}

We start by defining the set of \emph{hypernatural numbers} in $\RCrealrho$
and the set of $\rho$\emph{-moderate nets of natural numbers}. For
a deeper study of these notions, see \cite{Mukhammadiev2021}.
\begin{defn}
We set:
\begin{enumerate}
\item $\hypNr:=\left\{ [n_{\eps}]\in\RCrealrho:n_{\eps}\in\N\;\forall\eps\right\} $
\item $\N_{\rho}:=\left\{ (n_{\eps})\in\R_{\rho}:n_{\eps}\in\N\;\forall\eps\right\} $.
\end{enumerate}
\end{defn}

Therefore, $n\in\hypNr$ if and only if there exists $(x_{\eps})\in\R_{\rho}$
such that $n=\left[{\rm int}(|x_{\eps}|)\right]$, where ${\rm int}(-)$
is the integer part function. Clearly, $\N\subset\hypNr$. Note that
${\rm int}(-)$ is not well-defined on $\RCrealrho$. In fact, if
$x=1=\left[1-\rho_{\eps}^{\nicefrac{1}{\eps}}\right]=\left[1+\rho_{\eps}^{\nicefrac{1}{\eps}}\right]$,
then ${\rm int}\left(1-\rho_{\eps}^{\nicefrac{1}{\eps}}\right)=0$
whereas ${\rm int}\left(1+\rho_{\eps}^{\nicefrac{1}{\eps}}\right)=1$,
for $\eps$ sufficiently small. Similar counter examples can be set
for floor $\lfloor-\rfloor$ and ceiling $\lceil-\rceil$ functions.
However, the nearest integer function is well defined on $\hypNr$,
as proved in the following:
\begin{lem}
\label{lem:rpi-rni}Let $(n_{\eps})\in\N_{\rho}$ and $(x_{\eps})\in\R_{\rho}$
be such that $[n_{\eps}]=[x_{\eps}]$. Let ${\rm rpi}:\R\rightarrow\N$
be the function rounding to the nearest integer with tie breaking
towards $+\infty$, i.e.~${\rm rpi}(x)=\left\lfloor x+\frac{1}{2}\right\rfloor $.
Then ${\rm rpi}(x_{\eps})=n_{\eps}$ for $\eps$ small. The same result
holds using ${\rm rni}:\R\rightarrow\N$, the function rounding half
towards $-\infty$.
\end{lem}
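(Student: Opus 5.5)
The plan is to unwind the equality $[n_{\eps}]=[x_{\eps}]$ in $\RCrealrho$ into the statement that $(x_{\eps}-n_{\eps})$ is $\rho$-negligible. Concretely, this means that for every $q\in\N$ we have $|x_{\eps}-n_{\eps}|=O(\rho_{\eps}^{q})$ as $\eps\to0^{+}$. From a single such estimate we only need a weak consequence. Since $\rho_{\eps}\to0$, taking $q=1$ gives some $\eps_{0}>0$ such that
\[
|x_{\eps}-n_{\eps}|<\tfrac{1}{2}\qquad\text{for all }\eps\in(0,\eps_{0}].
\]
This is the only input from the Robinson-Colombeau construction. The rest is an elementary fact about real numbers lying strictly within distance $\frac{1}{2}$ of an integer.

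Fix $\eps\le\eps_{0}$ and write $x_{\eps}=n_{\eps}+\delta_{\eps}$ with $|\delta_{\eps}|<\frac{1}{2}$.

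For ${\rm rpi}$, we have $x_{\eps}+\frac{1}{2}=n_{\eps}+\left(\delta_{\eps}+\frac{1}{2}\right)$ with $\delta_{\eps}+\frac{1}{2}\in(0,1)$. Since $n_{\eps}\in\N$ is an integer, this gives $\left\lfloor x_{\eps}+\frac{1}{2}\right\rfloor=n_{\eps}$.

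For ${\rm rni}$, I will use the analogous formula ${\rm rni}(x)=\left\lceil x-\frac{1}{2}\right\rceil$. Here $x_{\eps}-\frac{1}{2}=n_{\eps}+\left(\delta_{\eps}-\frac{1}{2}\right)$ with $\delta_{\eps}-\frac{1}{2}\in(-1,0)$, so the ceiling equals $n_{\eps}$.

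The strict inequality $|\delta_{\eps}|<\frac{1}{2}$ is exactly what makes the tie-breaking convention irrelevant. This also explains the contrast with ${\rm int}$, floor and ceiling in the counterexample preceding the lemma: those functions have a discontinuity at integers, whereas the rounding functions are locally constant on $(n-\frac{1}{2},n+\frac{1}{2})$.

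There is no real obstacle here. The only points needing care are, first, to state clearly which representative-level consequence of the equality in $\RCrealrho$ is used, namely negligibility implies eventual smallness below $\frac{1}{2}$. Second, one should observe that $x_{\eps}$ may be negative for some $\eps$, so that ${\rm rpi}$, ${\rm rni}$ are a priori $\Z$-valued. However, the computation above shows that their value equals $n_{\eps}\in\N$ for $\eps$ small, so the statement with codomain $\N$ holds for all sufficiently small $\eps$.
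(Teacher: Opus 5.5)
Your proof is correct and follows the paper's own argument. The negligibility of $(x_{\eps}-n_{\eps})$ gives $|x_{\eps}-n_{\eps}|<\frac{1}{2}$ for $\eps$ small (the paper routes this through $\rho_{\eps}<\frac{1}{2}$), which places $x_{\eps}+\frac{1}{2}$ strictly between $n_{\eps}$ and $n_{\eps}+1$; your explicit treatment of ${\rm rni}$ via $\lceil x-\frac{1}{2}\rceil$ supplies the ``analogous argument'' that the paper leaves to the reader.
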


\begin{proof}
For $\eps$ small, $\rho_{\eps}<\frac{1}{2}$ and since $[n_{\eps}]=[x_{\eps}]$,
for sufficiently small $\eps$, we also have $n_{\eps}-\rho_{\eps}+\frac{1}{2}<x_{\eps}+\frac{1}{2}<n_{\eps}+\rho_{\eps}+\frac{1}{2}$.
But, $n_{\eps}\leq n_{\eps}-\rho_{\eps}+\frac{1}{2}<n_{\eps}+1$.
Therefore, $\left\lfloor x+\frac{1}{2}\right\rfloor =n_{\eps}$. An
analogous argument can be applied to ${\rm rni}(-)$.
\end{proof}
Actually, this lemma does not allow us to define a \textit{nearest
integer} function ${\rm ni}:\hypNr\ra\N_{\rho}$ as ${\rm ni}\left(\left[x_{\eps}\right]\right):={\rm rpi}\left(x_{\eps}\right)$
because if $[x_{\eps}]=[n_{\eps}]$, the equality $n_{\eps}={\rm rpi}\left(x_{\eps}\right)$
holds only for $\eps$ small. A simpler approach is to choose a representative
$(n_{\eps})\in\N_{\rho}$ for each $x\in\hypNr$ and to define ${\rm ni}(x):=\left(n_{\eps}\right)$.
Clearly, we must consider the net $\left({\rm ni}(x)_{\eps}\right)$
only for $\eps$ small, such as in equalities of the form $x=\left[{\rm ni}(x)_{\eps}\right]$.
This is what we do in the following:
\begin{defn}
The nearest integer function $\text{ni}(-)$ is defined by:
\begin{enumerate}
\item ${\rm ni}:\hypNr\rightarrow\N_{\rho}$
\item If $[x_{\eps}]\in\hypNr$ and ${\rm ni}\left(\left[x_{\eps}\right]\right)=\left(n_{\eps}\right)$
then $\forall^{0}\eps:n_{\eps}={\rm rpi}(x_{\eps})$.
\end{enumerate}
\end{defn}

\noindent In other words, if $x\in\hypNr$, then $x=\left[{\rm ni}(x)_{\eps}\right]$
and ${\rm ni}(x)_{\eps}\in\N$ for all $\eps$. Another possibility
is to reformulate Lem.~\ref{lem:rpi-rni} as
\[
[x_{\eps}]\in\hypNr\Longleftrightarrow[x_{\eps}]=\left[{\rm rpi}(x_{\eps})\right].
\]
Therefore, without loss of generality we may always suppose that $x_{\eps}\in\N$
whenever $[x_{\eps}]\in\hypNr$.
\begin{rem}
\label{rem:hypNr}~
\begin{enumerate}
\item $\hypNr$ with the order $\leq$ induced by $\RCrealrho$, is a directed
set.
\item Generally speaking, if $m$, $n\in\hypNr$, $m^{n}\notin\hypNr$ because
net $\left(m_{\eps}^{n_{\eps}}\right)$ can grow faster than any power
$\rho_{\eps}^{-k}$.
\item If $m\in\hypNr$, then $1^{m}:=\left[(1+z_{\eps})^{m_{\eps}}\right]$,
where $(z_{\eps})$ is $\rho$-negligible, is well defined and $1^{m}=1$.
In fact, $\log\left(1+z_{\eps}\right)^{m_{\eps}}$ is asymptotically
equal to $m_{\eps}z_{\eps}\ra0$, and this shows that $\left(1+z_{\eps}\right)^{m_{\eps}}$
is moderate. Finally, $\left|\left(1+z_{\eps}\right)^{m_{\eps}}-1\right|\leq\left|z_{\eps}\right|m_{\eps}\left(1+z_{\eps}\right)^{m_{\eps}-1}$
by the mean value theorem.
\end{enumerate}
\end{rem}

A map $a:\hypNr\ra\RCcomplexrho$ is called $\rho$-\textit{hypersequence
of generalized complex numbers}. The value $a(n)\in\RCcomplexrho$
at $n\in\hypNr$ of the function $a$ are called \textit{terms} of
the hypersequence, and as usual, denoted using an index as argument:
$a_{n}=a(n)$.
\begin{defn}
Let $(a_{n})_{n}:\hypNr\ra\RCcomplexrho$ be a $\rho$-hypersequence
of generalized numbers. Finally let $l\in\RCcomplexrho$. Then we
say that
\[
l\text{ is the hyperlimit of }(a_{n})_{n}
\]
if 
\[
\forall q\in\N\,\exists M\in\hypNr\,\forall n\in\hypNr:n\geq M\Rightarrow|a_{n}-l|<\diff\rho^{q}.
\]
By using the information that the sharp topology on $\RCcomplexrho$
is Hausdorff (see \cite{Giordano2018}), it follows that there exists
at most one hyperlimit. In this case, we will therefore use the notation
\[
l=\hyperlim{}{\rho}a_{n}.
\]
\end{defn}

\begin{example}
\label{exa:hypsequence}~
\begin{enumerate}
\item A sufficient condition to extend an ordinary sequence $a:\N\rightarrow\RCcomplexrho$
of $\rho$-generalized complex numbers to the whole $\hypNr$ is 
\begin{equation}
\forall n\in\hypNr:\left(a_{{\rm ni}(n)_{\eps}}\right)\in\CC_{\rho}.\label{eq:extensionhypseq}
\end{equation}
In fact, in this way $a_{n}$ is well-defined; on the other hand,
using \eqref{eq:extensionhypseq}, we have defined an extension of
the old sequence $a$ because if $n\in\N$, then ${\rm ni}(n)_{\eps}=n$
for $\eps$ small and hence we get $a_{n}=[a_{n}]$. For example,
the sequence of infinities $a_{n}=\frac{i}{n}+\diff\rho^{-1}\in\RCcomplexrho$
for all $n\in\N$ can be extended to $\hypNr$.
\item \label{enu:hypsequencenotconverge}Generally, we restrict our attention
to hypersequences of generalized numbers $(a_{n})_{n}:\hypNr\ra\RCcomplexrho$
that are well-defined for all $n\in\hypNr$ and converge in $\RCcomplexrho$.
However, in \cite{Giordano2021,Mukhammadiev2021}, it is proved that
$\not\exists\hyperlim{}{\rho}\frac{1}{\log n}$ in $\RCrealrho$ (this
hyperlimit converges to zero only using two gauges).
\item For all $k\in\N_{>0}$, we have $\hyperlim{}{\rho}\frac{1}{n^{k}}=0$.
In fact, for all $n\in\hypNr_{>0}$, we have $0<\frac{1}{n^{k}}<\diff\rho^{q}$
if and only if $n^{k}>\diff\rho^{-q}$, i.e. $n>\diff\rho^{-\frac{q}{k}}$.
Thus, it suffices to take $M_{\eps}:={\rm int}\left(\rho_{\eps}^{-\frac{q}{k}}\right)+1$
in the definition of hyperlimit. Analogously, we can treat rational
functions having degree of denominator greater or equal to that of
numerator.
\end{enumerate}
\end{example}

Extending an $\eps$-wise convergent ordinary sequence $(a_{n})_{n\in\N}:\N\ra\RCcomplexrho$
of $\rho$-generalized numbers to a convergent hyperseries is not
an easy task because the gauge $\rho$ must depend on the speed of
convergence of $(a_{n\eps})$ as $n\to+\infty$, $n\in\N$:
\begin{thm}
\label{thm:epslimitofsequence}Let $(a_{n,\eps})_{n,\eps}:\N\times I\ra\CC$.
Assume that for all $\eps$
\[
\exists\lim_{n\rightarrow+\infty}a_{n,\eps}=:l_{\eps},
\]
i.e.
\begin{equation}
\forall\eps\,\forall q\in\N\,\exists M_{\eps q}\in\N_{>0}\,\forall n>M_{\eps q}:|a_{n,\eps}-l_{\eps}|<\rho_{\eps}^{q},\label{eq:epsconvergent}
\end{equation}
such that $l_{\eps}\in\CC$ and (without loss of generality)
\begin{equation}
M_{\eps q}\leq M_{\eps q+1}\quad\forall\eps\forall q.\label{eq:Meps}
\end{equation}
Set $M_{\eps}:=M_{\eps,\rho_{\eps}^{-1}}^{-1}$. Then, if $(M_{\eps})\in\R_{\rho}$:
\begin{enumerate}
\item $M_{q}:=[M_{\eps q}]\in\hypNr$ for all $q\in\N$;
\item \label{enu:negligiblelimit}If $(l_{\eps})$ is $\rho$-moderate,
then there exists $M\in\hypNr$ and a hypersequence $(a_{n})_{n}:\hypNr\ra\RCcomplex{\rho}$
such that $a_{n}=\left[a_{{\rm ni}(n)_{\eps},\eps}\right]\in\RCcomplex{\rho}$
for all $n\in\hypNr_{\geq M}$ and $l:=\left[l_{\eps}\right]=\hyperlim{}{\rho}a_{n}$.
\end{enumerate}
\end{thm}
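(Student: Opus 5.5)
The proof works $\eps$-wise. The moderateness hypothesis on $(M_{\eps})$ together with the monotonicity \eqref{eq:Meps} controls every $(M_{\eps q})$ at once. I read the definition of $M_{\eps}$ as $M_{\eps}:=M_{\eps,{\rm int}(\rho_{\eps}^{-1})}$, i.e.\ the threshold taken at the $\eps$-dependent index $q={\rm int}(\rho_{\eps}^{-1})$.

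For (i), fix $q\in\N$. Since $\rho_{\eps}\to0$, we have $q\le{\rm int}(\rho_{\eps}^{-1})$ for $\eps$ small. Iterating \eqref{eq:Meps} then gives $0<M_{\eps q}\le M_{\eps}$ for $\eps$ small. Because $(M_{\eps})\in\R_{\rho}$, the net $(M_{\eps q})$ is $\rho$-moderate. It is natural-valued, so $(M_{\eps q})\in\N_{\rho}$ and $M_{q}=[M_{\eps q}]\in\hypNr$.

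For (ii), I first fix a threshold beyond which the terms are moderate. Set $M:=M_{0}+1=[M_{\eps0}+1]\in\hypNr$. For every $\eps$ and every $n>M_{\eps0}$, \eqref{eq:epsconvergent} with $q=0$ gives $|a_{n,\eps}|\le|l_{\eps}|+1$. This bound is moderate because $(l_{\eps})$ is.

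Next I must check that $n\ge M$ in $\hypNr$ really translates into $n_{\eps}\ge M_{\eps0}+1$ for $\eps$ small, where $n_{\eps}:={\rm ni}(n)_{\eps}$. From $n\ge M$ we get $n_{\eps}\ge M_{\eps0}+1-z_{\eps}$ for some negligible $(z_{\eps})$. Both sides are integers up to an error that is eventually $<1$, so $n_{\eps}\ge M_{\eps0}+1$ for $\eps$ small. The same integrality argument, applied to an arbitrary $N\in\hypNr$, will be reused in the last step.

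To get a hypersequence defined on all of $\hypNr$ (the order on $\RCrealrho$ is not total), I set
\[
a_{n}:=\left[a_{\max({\rm ni}(n)_{\eps},\,M_{\eps0}+1),\eps}\right].
\]
This is moderate by the bound above, and it coincides with $\left[a_{{\rm ni}(n)_{\eps},\eps}\right]$ whenever $n\ge M$. It is well defined: two representatives of $n$ in $\N_{\rho}$ agree for $\eps$ small by Lem.~\ref{lem:rpi-rni}, hence so do the corresponding nets, and equivalence classes depend only on small $\eps$.

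It remains to prove $\hyperlim{}{\rho}a_{n}=l$. Given $q\in\N$, I take the threshold $N:=M_{q+1}+M\in\hypNr$, which lies in $\hypNr$ by (i). For $n\ge N$, the integrality argument gives $n_{\eps}>M_{\eps,q+1}$ and $n_{\eps}\ge M_{\eps0}+1$ for $\eps$ small. Then \eqref{eq:epsconvergent} yields $|a_{n,\eps}-l_{\eps}|<\rho_{\eps}^{q+1}$, so $|a_{n}-l|\le\diff\rho^{q+1}<\diff\rho^{q}$. The last inequality is strict in the sense of $\RCrealrho$ because $\diff\rho^{q}-\diff\rho^{q+1}$ is invertible and positive.

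The main obstacle is bookkeeping rather than estimates. One must pass correctly from order relations between hypernaturals to $\eps$-wise inequalities between integer representatives, which holds only for $\eps$ small, via the integrality argument. One must also make sure the defining representative chosen by ${\rm ni}$ does not affect the class.
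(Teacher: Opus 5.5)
Your proof is correct and follows the paper's argument. Part (i) is the same bound $M_{\eps q}\le M_{\eps}$ for $\eps$ small, coming from the monotonicity of $q\mapsto M_{\eps q}$. In part (ii) you likewise fix one value of $q$ (you take $q=0$, the paper $q=1$) to bound the terms moderately past a threshold, and then use the thresholds $M_{q}$ to obtain the hyperlimit.

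The remaining differences are refinements of bookkeeping:
\begin{enumerate}
\item Your $\eps$-wise $\max\left(\mathrm{ni}(n)_{\eps},M_{\eps0}+1\right)$ replaces the paper's case definition ($\left[a_{\mathrm{ni}(n)_{\eps},\eps}\right]$ if $n>M_{1}$, and $1$ otherwise).
\item Your explicit threshold $M=M_{0}+1$ makes the $M$ of the statement concrete.
\item Passing to $q+1$ properly justifies the strict inequality $|a_{n}-l|<\diff\rho^{q}$ in $\RCrealrho$, which the paper simply asserts.
\end{enumerate}
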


\begin{proof}
Since $\forall q\in\N$ $\forall^{0}\eps:q\leq\rho_{\eps}^{-1}$,
\eqref{eq:Meps} implies
\begin{equation}
\forall q\in\N\,\forall^{0}\eps:M_{\eps}\geq M_{\eps q}.\label{eq:Mepsbar}
\end{equation}
We can hence define $M:=\left[M_{\eps}\right]\in\hypNr$, $M_{q}:=\left[M_{\eps q}\right]\in\hypNr$
because of the assumption $(M_{\eps})\in\R_{\rho}$ and of \eqref{eq:Mepsbar}.
Set
\begin{equation}
a_{n}:=\begin{cases}
\left[a_{{\rm ni}(n)_{\eps},\eps}\right] & \text{if }n>M_{1}\text{ in }\hypNr\\
1 & \text{otherwise}
\end{cases}\quad\forall n\in\hypNr.\label{eq:a_nsigma}
\end{equation}
We have to prove that this well-defines a hypersequence $(a_{n})_{n}:\hypNr\ra\RCcomplex{\rho}$.
First of all, the sequence is well-defined with respect to the equality
in $\hypNr$. Moreover, setting $q=1$ in \eqref{eq:epsconvergent},
we get $|a_{n,\eps}-l_{\eps}|<\rho_{\eps}$ for all $\eps$ and for
all $n>M_{\eps1}$. If $n>M_{1}$ in $\hypNr$, then ${\rm ni}(n)_{\eps}>M_{\eps1}$
for $\eps$ small, and hence $|a_{{\rm ni}(n)_{\eps},\eps}-l_{\eps}|<\rho_{\eps}$.
This shows that $a_{n}\in\RCcomplex{\rho}$ and $l=[l_{\eps}]\in\RCcomplex{\rho}$
because we assumed that $(l_{\eps})\in\CC_{\rho}$. Finally, \eqref{eq:Meps}
and \eqref{eq:Mepsbar} yield that if $n>M_{q}$ then $n>M_{1}$ and
hence $\left|a_{n}-l\right|<\diff\rho^{q}$.
\end{proof}
In some cases, it could happen that $\rho_{\eps}^{-1}<M_{\eps q}$
for all $q\in\N$ at least on some subpoint $L\subseteq_{0}I$. In
these cases, we have to change gauge, e.g.~taking $\sigma_{\eps}:=M_{\eps,\rho_{\eps}^{-1}}^{-1}$.
For example, we can have $\sigma_{\eps}=\exp\left(-\rho_{\eps}^{-\rho_{\eps}^{-1}}\right)\leq M_{\eps q}^{-1}=\exp\left(-\rho_{\eps}^{-q}\right)\leq\rho_{\eps}=\eps$
and we are hence forced to change gauge choosing the smaller $\sigma<\rho$.
This is actually what happens in Example \ref{exa:hypsequence}\ref{enu:hypsequencenotconverge}.
The problem is that the $\eps$-wise convergence assumption \eqref{eq:epsconvergent}
allows one to obtain only 
\[
\forall q\in\N\,\exists M\in\hypNs\,\forall n\in\hypNs_{\geq M}:\left|a_{n}-\ell\right|<\diff\rho^{q}\quad\text{i.e.}\quad\hyperlim{\rho}{\sigma}a_{n}=\ell.
\]
Therefore, the assumption $(M_{\eps})\in\R_{\rho}$ is a consequence
of our idea to simplify the theory of hyperseries for GHF using only
one gauge. However, note that we have shown this result only for completeness,
since we essentially never use it for the results related to GHF.

The following result applies to all sharply continuous functions (and
hence to all GHF, GSF, and all Colombeau generalized functions, see
e.g.~\cite{NuGi24a,Giordano2021}; see also \cite{Aragona2005} for
a more general class of functions) because of their continuity in
the sharp topology.
\begin{thm}
Suppose that $f:U\subseteq\RCcomplexrho\ra\RCcomplexrho$. Then $f$
is sharply continuous function at $z=c$ if and only if it is hyper-sequentially
continuous, i.e.~for any hypersequence $(z_{n})_{n}$ in $U$ converging
to $c$, the hypersequence $\left(f(z_{n})\right)_{n}$ converges
to $f(c)$, i.e.~$f\left(\hyperlim{}{\rho}z_{n}\right)=\hyperlim{}{\rho}f(z_{n})$.
\end{thm}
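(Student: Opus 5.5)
Both implications follow the classical proof that continuity equals sequential continuity in a metric space. The sharp topology is generated by balls $B_r(c)=\{z\in\RCcomplexrho:|z-c|<r\}$ with $r\in\RCrealrho_{>0}$, and it suffices to use radii $r=\diff\rho^{q}$, $q\in\N$. The only new feature is that sequences are indexed by $\hypNr$ rather than $\N$. So in the converse direction the counterexample hypersequence must be defined for every hypernatural index, and it must be well defined with respect to equality in $\hypNr$.

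\emph{($\Rightarrow$)} Assume $f$ is sharply continuous at $c$, and let $(z_n)_n$ be a hypersequence in $U$ with $\hyperlim{}{\rho}z_n=c$. Fix $q\in\N$ and choose $p\in\N$ such that $z\in U$ and $|z-c|<\diff\rho^{p}$ imply $|f(z)-f(c)|<\diff\rho^{q}$. By the definition of hyperlimit there is $M\in\hypNr$ with $|z_n-c|<\diff\rho^{p}$ for all $n\ge M$. Hence $|f(z_n)-f(c)|<\diff\rho^{q}$ for all $n\ge M$, which proves $f(c)=\hyperlim{}{\rho}f(z_n)$.

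\emph{($\Leftarrow$)} I argue by contraposition. Suppose $f$ is not sharply continuous at $c$. Then there is $q\in\N$ such that for every $p\in\N$ some $w_p\in U$ satisfies $|w_p-c|<\diff\rho^{p}$ but $|f(w_p)-f(c)|\not<\diff\rho^{q}$. From the countable family $(w_p)_{p\in\N}$ I build a hypersequence. The plan is to set $z_n:=w_p$ whenever $n\in\hypNr$ lies in the band $\diff\rho^{-p}\le n<\diff\rho^{-(p+1)}$, and $z_n:=w_0$ otherwise.

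This is the main obstacle. The order on $\RCrealrho$ is not total, so these bands do not partition $\hypNr$, and the case distinction depends only on the class of $n$, not on a representative. So a hypernatural $n\ge\diff\rho^{-p}$ may fail to lie in any band, and then the rule above gives $z_n=w_0$, which need not be close to $c$. Since $n\ge\diff\rho^{-p}$ and $n<\diff\rho^{-(p+1)}$ are both statements about the class of $n$, the definition is well defined. The remaining problem is coverage. To fix it, I would replace the bands by the rule
\[
z_n:=w_{p(n)},\qquad p(n):=\max\{p\in\N:\ p\le k,\ n\ge\diff\rho^{-p}\}
\]
for a suitable cap $k$, or alternatively use a nested choice that only requires $n\ge\diff\rho^{-p}$ to imply $|z_n-c|<\diff\rho^{p}$. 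The simplest version is to choose $w_p$ decreasing in distance, so that $|w_{p'}-c|<\diff\rho^{p}$ whenever $p'\ge p$, and then take $p(n)$ to be the largest $p$ with $n\ge\diff\rho^{-p}$. A residual difficulty is that this set of $p$ may be unbounded for a given $n$. I would handle it by working $\eps$-wise: using representatives $n_\eps\in\N$, set $p_\eps(n):=\max\{p\le\rho_\eps^{-1}\cdots\}$, glue the nets accordingly, and check moderateness.

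With $z_n$ in hand, $\hyperlim{}{\rho}z_n=c$, taking $M:=\diff\rho^{-p}$ rounded to an element of $\hypNr$ for each $p$. On the other hand, for every $M\in\hypNr$ there are $n\ge M$ with $z_n=w_p$ for some $p$, and then $|f(z_n)-f(c)|\not<\diff\rho^{q}$. So $(f(z_n))_n$ does not converge to $f(c)$, which contradicts hyper-sequential continuity.
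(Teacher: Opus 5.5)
Your ($\Rightarrow$) direction is fine. The converse has a genuine gap, and it sits exactly where you abandon your ``simplest version''. The residual difficulty you worry about does not occur. For $n\in\hypNr$ let $S(n):=\{p\in\N: n\ge\diff\rho^{-p}\}$. This set is downward closed. Since $n$ is $\rho$-moderate, there is $N\in\N$ with $n\le\diff\rho^{-N}$, and $\diff\rho^{-p}\le\diff\rho^{-N}$ is false for $p>N$, so $S(n)\subseteq\{0,\dots,N\}$. Hence $S(n)$ is either empty or has a standard maximum $P(n)$, and $P(n)$ depends only on the class of $n$.

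Set $z_n:=w_{P(n)}$, and $z_n:=w_0$ if $S(n)=\emptyset$. This gives a hypersequence with values in $\{w_p:p\in\N\}\subseteq U$. If $n\ge M_p:=\left[\lceil\rho_\eps^{-p}\rceil\right]\in\hypNr$, then $p\in S(n)$, so $|z_n-c|<\diff\rho^{P(n)}\le\diff\rho^{p}$. At the same time, every single $f(z_n)$ fails $|f(z_n)-f(c)|<\diff\rho^{q}$. No monotonicity of the $w_p$ is needed.

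The $\eps$-wise patch you propose instead would break the argument rather than complete it. Suppose a representative of $n$ equals $\lceil\rho_\eps^{-1}\rceil$ on a subpoint $L$ and $\lceil\rho_\eps^{-2}\rceil$ on its complement. Then $[w_{p_\eps(n),\eps}]$ is the interleaving of $w_1$ on $L$ and $w_2$ off $L$. The statement allows an arbitrary $U$ and an arbitrary set-theoretic map $f$ with no net structure, so this glued point need not lie in $U$. You then do not have a hypersequence in $U$, and hyper-sequential continuity cannot be invoked.

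For comparison, the paper avoids any reindexing. It negates continuity over all radii $\delta\in\RCrealrho_{>0}$, chooses a bad point $z_\delta$ for every such $\delta$, and puts $z_n:=z_{1/n}$. Well-definedness is automatic because $1/n$ depends only on the class of $n$, and $z_n\to c$ follows from $\hyperlim{}{\rho}\frac{1}{n}=0$. The corrected version of your construction is a valid alternative. It needs only countably many choices, one per $p\in\N$, instead of one for each $\delta\in\RCrealrho_{>0}$.
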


\begin{proof}
We only prove that the hyper-sequential continuity is a sufficient
condition, because the other implication is a trivial generalization
of the classical one. By contradiction, assume that for some $Q\in\N$
\begin{equation}
\forall\delta\in\RCrealrho_{>0}\,\exists z_{\delta}\in U:|z_{\delta}-c|<\diff\rho^{n},\;\left|f(z_{\delta})-f(c)\right|>_{s}\diff\rho^{Q},\label{eq:contracontinuous}
\end{equation}
see \cite[Sec.~1.3]{NuGi24a} for the relations $>_{s}$, $=_{s}$,
and more generally, for the language of subpoints. For $n\in\hypNr_{>0}$
set $\delta_{n}:=\frac{1}{n}$ and $z_{n}:=z_{\delta_{n}}$. Then
for all $n\in\hypNr$, from \eqref{eq:contracontinuous} we get $|z_{n}-c|<\diff\rho^{n}\rightarrow0$
as $n\rightarrow+\infty$ in $n\in\hypNr$. Therefore, $(z_{n})_{n}$
is an hypersequence of $U$ that converges to $c$, which yields $f(z_{n})\rightarrow f(c)$,
in contradiction with \eqref{eq:contracontinuous}.
\end{proof}
In the following, we deal with Cauchy hypersequences.
\begin{defn}
\label{def:Cauchyhypsequence}We say $(z_{n})_{n\in\hypNr}$ is a
Cauchy hypersequence if 
\[
\forall q\in\N\,\exists M\in\hypNr\,\forall n,m\in\hypNr_{\geq M}:\left|z_{n}-z_{m}\right|<\diff\rho^{q}.
\]
\end{defn}

\begin{thm}
\label{thm:Cauchycriteria}A hypersequence converges if and only if
it is a Cauchy hypersequence.
\end{thm}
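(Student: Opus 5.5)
The forward implication follows the classical argument. If $l=\hyperlim{}{\rho}z_{n}$, then for a given $q\in\N$ we apply the definition of hyperlimit with $q+1$. This yields $M\in\hypNr$ such that $|z_{n}-l|<\diff\rho^{q+1}$ for all $n\geq M$. For $n,m\geq M$ we then have $|z_{n}-z_{m}|<2\diff\rho^{q+1}<\diff\rho^{q}$, since $2\rho_{\eps}<1$ for $\eps$ small.

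For the converse, I would reduce to the Cauchy completeness of $\RCcomplexrho$ in the sharp topology, which is known (see \cite{Giordano2021}) for ordinary sequences indexed by $\N$. Starting from a Cauchy hypersequence $(z_{n})_{n\in\hypNr}$, for each $q\in\N$ choose $M_{q}\in\hypNr$ as in Def.~\ref{def:Cauchyhypsequence}. Since $\hypNr$ is directed (Rem.~\ref{rem:hypNr}), the $M_{q}$ can be replaced by $\max(M_{1},\dots,M_{q})$, which is again in $\hypNr$ (take $\eps$-wise maxima of natural-number representatives). So we may assume $M_{q}\leq M_{q+1}$. Set $w_{q}:=z_{M_{q}}$, an ordinary sequence indexed by $q\in\N$. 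Then $|w_{q+1}-w_{q}|<\diff\rho^{q}$, because $M_{q+1}\geq M_{q}$. Hence $(w_{q})_{q\in\N}$ is an ordinary Cauchy sequence in the sharp topology: for $p\geq q$ we get $|w_{p}-w_{q}|<\diff\rho^{q}$ directly from the choice of $M_{q}$, without any telescoping. By sharp Cauchy completeness of $\RCcomplexrho$ there is $l\in\RCcomplexrho$ with $w_{q}\to l$.

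It remains to show that $l=\hyperlim{}{\rho}z_{n}$. Given $q$, pick $p\geq q+1$ with $|w_{p}-l|<\diff\rho^{q+1}$. For $n\geq M_{q+1}$ we have $p\geq q+1$, so $M_{p}\geq M_{q+1}$. Then
\[
|z_{n}-l|\leq|z_{n}-z_{M_{p}}|+|w_{p}-l|<2\diff\rho^{q+1}<\diff\rho^{q}.
\]

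The main obstacle is the completeness step. If one prefers not to cite completeness of $\RCcomplexrho$, it must be proved directly, by constructing a representative $(l_{\eps})$ from representatives of $w_{q}$ via a diagonal choice. Concretely, choose a decreasing sequence $\eps_{q}\to0$ such that the inequalities $|w_{q+1,\eps}-w_{q,\eps}|<\rho_{\eps}^{q}$ hold for $\eps<\eps_{q}$, and set $l_{\eps}:=w_{q,\eps}$ for $\eps\in[\eps_{q+1},\eps_{q})$. One then has to check moderateness of $(l_{\eps})$ and the estimate $|l-w_{q}|\leq\diff\rho^{q}$ using the standard argument that the tail sums $\sum_{k\geq q}\rho_{\eps}^{k}$ are controlled. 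A secondary care point is that all inequalities $<$ are meant in the sense of $\RCrealrho$, so the strict inequalities from the Cauchy condition must be handled with a margin, which is why we pass from $\diff\rho^{q+1}$ to $\diff\rho^{q}$.
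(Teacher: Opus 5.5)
Your proof is correct. The paper does not prove this statement itself and simply cites \cite{Mukhammadiev2021}. Your argument is the standard route to this result and works verbatim for $\RCcomplexrho$-valued hypersequences: you force the Cauchy indices $M_q$ to increase via $\eps$-wise maxima in $\hypNr$, pass to the ordinary sequence $w_q=z_{M_q}$, and invoke the sharp Cauchy completeness of $\RCcomplexrho$, with the diagonal construction as a correct self-contained substitute for that completeness step.
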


\noindent See \cite{Mukhammadiev2021} for the proof.

In the following results, we generalized the classical algebraic operations
and relations between limits. Thanks to definition of sharp topology,
these results can be proved by trivially generalizing classical proofs.
\begin{thm}
\label{thm:classicaloperations}Let $z$, $w:\hypNr\ra\RCcomplexrho$
be hypersequences, then we have:
\begin{enumerate}
\item If $\hyperlim{}{\rho}z_{n}=a$ and $\hyperlim{}{\rho}w_{n}=b$ then
$\hyperlim{}{\rho}\left(z_{n}+w_{n}\right)=a+b$, $\hyperlim{}{\rho}\left(z_{n}\cdot w_{n}\right)=a\cdot b$.
\item If $w_{n}$ is invertible for all $n\in\hypNr$ then $\hyperlim{}{\rho}\left(\frac{z_{n}}{w_{n}}\right)=\frac{a}{b}$.
\item If for all $n\in\hypNr$, $z_{n}=x_{n}+iy_{n}$, then 
\[
\hyperlim{}{\rho}z_{n}=\hyperlim{}{\rho}x_{n}+i\hyperlim{}{\rho}y_{n},
\]
if these hyperlimits exist.
\end{enumerate}
\end{thm}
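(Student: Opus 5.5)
The plan is to transpose the classical $\eps$-$N$ proofs, replacing $\eps$ by $\diff\rho^{q}$ with $q\in\N$ and thresholds $N\in\N$ by $M\in\hypNr$. Two facts are used throughout. First, $\hypNr$ is directed (Rem.~\ref{rem:hypNr}), so finitely many thresholds $M_1,M_2$ can be replaced by a common one, e.g.\ $\max(M_1,M_2)=[\max(M_{1\eps},M_{2\eps})]\in\hypNr$. Second, in the sharp topology it suffices to obtain bounds of the form $C\,\diff\rho^{q}$ with $C\in\RCrealrho$ moderate: since $C<\diff\rho^{-r}$ for some $r\in\N$, we can absorb $C$ by starting from $q+r$.

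For (i), the sum follows from $|z_n+w_n-a-b|\le|z_n-a|+|w_n-b|<2\diff\rho^{q+1}<\diff\rho^{q}$ for $n\ge\max(M_1,M_2)$. For the product, write $z_nw_n-ab=(z_n-a)w_n+a(w_n-b)$. Taking $q=0$ gives $|w_n|\le|b|+1$ for $n$ large, and $|a|+|b|+1<\diff\rho^{-r}$ for some $r\in\N$. Applying the hyperlimit definitions with $q+r+1$ then yields $|z_nw_n-ab|<\diff\rho^{q}$ eventually.

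For (ii), I will reduce to $\hyperlim{}{\rho}\frac{1}{w_n}=\frac{1}{b}$ and then use (i). This is the main obstacle: invertibility of each $w_n$ is not by itself enough. We need $b$ invertible, which I take as implicitly assumed since $\frac{a}{b}$ must make sense, and we need a uniform lower bound $|w_n|\ge c>0$ for $n$ large. Since $b$ is invertible, $|b|\ge\diff\rho^{s}$ for some $s\in\N$. Choosing $n\ge M$ with $|w_n-b|<\diff\rho^{s+1}$ gives $|w_n|\ge\diff\rho^{s}-\diff\rho^{s+1}\ge\frac12\diff\rho^{s}$. Then
\[
\left|\frac{1}{w_n}-\frac{1}{b}\right|=\frac{|b-w_n|}{|w_n||b|}\le2\diff\rho^{-2s}|w_n-b|,
\]
and the moderate-constant absorption finishes the argument.

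For (iii), if $x_n\to x$ and $y_n\to y$, then $|z_n-(x+iy)|\le|x_n-x|+|y_n-y|$, which is exactly the sum argument of (i) applied to $x_n$ and $iy_n$. The converse direction also holds, since $|x_n-x|,|y_n-y|\le|z_n-(x+iy)|$, but it is not needed for the statement as given. In all parts I take the inequalities in $\RCrealrho$ to be the usual ones, and the "for some $r$" moderateness bounds come from the definition of $\RCrealrho$.
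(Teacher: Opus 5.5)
Your proof is correct and takes the approach the paper intends: the paper gives no written proof, only remarking that these facts follow by trivially generalizing the classical arguments, with $\diff\rho^{q}$ in place of $\varepsilon$ and thresholds taken in the directed set $\hypNr$. You also correctly read (ii) as requiring $b$ to be invertible (invertibility of each $w_n$ alone does not suffice); this hypothesis, which the statement leaves implicit, is what gives the uniform lower bound $|w_n|\ge\frac12\diff\rho^{s}$ for large $n$.
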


\subsection{Definition and properties of hyperseries}

In this section, we introduce the non-Archimedean ring of sequences
of complex hyperseries. Using a classical quotient of suitable moderate
nets $\left(a_{n\eps}\right)_{n,\eps}$ over negligible ones, we can
give a meaning to hyperfinite sums of the form $\sum_{n=N}^{M}a_{n}$
for all $N$, $M\in\hypNr$. This is not a trivial step since, e.g.,
there is a continuum amount of hyperfinite numbers even between $N\in\N$
and $N+1$. Of course, considering the hyperlimit of the hypersequence
$N\in\hypNr\longmapsto\sum_{n=0}^{N}a_{n}\in\RCcomplexrho$, we obtain
the definition of convergent hyperseries. For more details and proofs
about the basic notions introduced here, the reader can refer to \cite{Tiwari2022}
for the real cases.
\begin{defn}
\label{def:hyperseries}The quotient set $\left(\CC^{\N\times I}\right)_{\rho}\slash\sim_{\rho}=:\RCcomplexrho_{{\rm s}}$
of the set $\left(\CC^{\N\times I}\right)_{\rho}$ of nets which are
$\rho$\textit{-moderate over hypersums}, i.e.~satisfying
\begin{equation}
\left(a_{n\eps}\right)_{n,\eps}\in\left(\CC^{\N\times I}\right)_{\rho}\;:\Longleftrightarrow\;\forall N\in\hypNr:\left(\sum_{n=0}^{{\rm ni}(N)_{\eps}}a_{n\eps}\right)\in\CC_{\rho},\label{eq:moderateseries}
\end{equation}
by $\rho$\textit{-negligible nets}, i.e.~such that $\left(a_{n\eps}\right)_{n,\eps}\sim_{\rho}0$,
where:
\begin{equation}
\left(a_{n\eps}\right)_{n,\eps}\sim_{\rho}\left(\bar{a}_{n\eps}\right)_{n,\eps}\;:\Longleftrightarrow\;\forall M,N\in\hypNr:\left(\sum_{n={\rm ni}(N)_{\eps}}^{{\rm ni}(M)_{\eps}}(a_{n\eps}-\bar{a}_{n\eps})\right)\sim_{\rho}0\label{eq:negligibleseries}
\end{equation}
is called the \textit{space of sequences for complex hyperseries}
(this name explains the meaning of the subscript in $\RCcomplexrho_{{\rm s}}$).
Nets of $\CC^{\N\times I}$ are denoted as $(a_{n\eps})_{n,\eps}$
or simply $(a_{n\eps})$; equivalence classes of $\RCcomplexrho_{{\rm s}}$
are denoted as $(a_{n})_{n}=[(a_{n\eps})_{n,\eps}]=[a_{n\eps}]\in\RCcomplexrho_{{\rm s}}$.
Note that in the term $a_{n\eps}$, we have $n\in\N$, and not $n\in\hypNr$.
Similarly, we can define the ring $\RCrealud{\rho}{}{}_{\mathrm{s}}:=\left(\R^{\N\times I}\right)_{\rho}\slash\sim_{\rho}$.\\
For some results, such as e.g.~the ratio test Thm.~\ref{thm:ratiotest}
and the root test Thm.~\ref{thm:roottest} (where we need to perform
term by term operations in a hyperseries) the following \emph{uniform}
notions or moderateness will be useful:
\begin{align*}
(a_{n\eps})_{n,\eps}\in\left(\CC^{\N\times I}\right)_{\mathrm{u},\rho}\quad & :\iff\quad\forall Q\in\N\,\forall^{0}\eps\,\forall n\in\N:\ |a_{n\eps}|\le\rho_{\eps}^{-Q},\\
(a_{n\eps})_{n,\eps}\sim_{\mathrm{u},\rho}(\bar{a}_{n\eps})_{n,\eps}\quad & :\iff\quad\forall q\in\N\,\forall^{0}\eps\,\forall n\in\N:\ |a_{n\eps}-\bar{a}_{n\eps}|\le\rho_{\eps}^{q}.
\end{align*}
The corresponding quotient ring is denoted by $\RCcomplexrho_{\mathrm{u}}:=\left(\CC^{\N\times I}\right)_{\mathrm{u},\rho}/\sim_{\mathrm{u},\rho}$
and the equivalence classes by $[a_{n\eps}]_{\mathrm{u}}\in\RCcomplexrho_{\mathrm{u}}$.
Similarly, we can define the ring $\RCrealud{\rho}{}{}_{\mathrm{u}}:=\left(\R^{\N\times I}\right)_{\mathrm{u},\rho}\slash\sim_{\mathrm{u},\rho}$.
\end{defn}

\noindent Clearly, the adjective \emph{uniform} refers to the logical
structure $\exists\eps_{0}\,\forall\eps\le\eps_{0}\,\forall n\in\N$.
Moreover, it is easy to prove that both $\RCcomplexrho\subseteq\RCcomplexrho_{{\rm s}}$
and $\RCcomplexrho\subseteq\RCcomplexrho_{{\rm u}}$ via the embedding
$[x_{\eps}]\mapsto[(x_{\eps})_{n,\eps}]_{\mathrm{s}(u)}$, and that
the map $\lambda:[a_{n\eps}]_{\mathrm{u}}\in\RCcomplexrho_{\mathrm{u}}\mapsto[a_{n\eps}]_{\mathrm{s}}\in\RCcomplexrho_{\mathrm{s}}$
is well-defined and linear. On the other hand, the possible injectivity
of $\lambda$ is still and open problem.
\begin{defn}
\label{def:convergenthyperseries}Let $(a_{n})_{n}=[a_{n\eps}]\in\RCcomplexrho_{{\rm s}}$,
then the term 
\begin{equation}
\sum_{n=N}^{M}a_{n}:=\left[\sum_{n={\rm ni}(N)_{\eps}}^{{\rm ni}(M)_{\eps}}a_{n\eps}\right]\in\RCcomplexrho\quad\forall N,M\in\hypNr\label{eq:hyperfinitesum}
\end{equation}
is called $\rho$\textit{-hypersum} of $(a_{n})_{n}$. Moreover, we
say that $s$ is the $\rho$\textit{-sum of the hyperseries with terms}
$(a_{n})_{n}$\textit{ as} $n\in\hypNr$ if the hypersequence $N\in\hypNr\longmapsto\sum_{n=0}^{N}a_{n}\in\RCcomplexrho$
converges to $s\in\RCcomplexrho$. In this case, we write 
\begin{equation}
s=\hyperlimarg{}{\rho}{N}\sum_{n=0}^{N}a_{n}=:\hypersum{}{\rho}a_{n}.\label{eq:convergenthypseries}
\end{equation}
As usual, we also say that the hyperseries $\hypersum{}{\rho}a_{n}$
\emph{is convergent} if 
\[
\exists\,\hyperlimarg{}{\rho}{N}\sum_{n=0}^{N}a_{n}\in\RCcomplexrho.
\]
Whereas, we say that a hyperseries $\hypersum{}{\rho}a_{n}$ \emph{does
not converge} if $\hyperlim{}{\rho}\sum_{n=0}^{N}a_{n}$ does not
exist in $\RCcomplexrho$. More specifically, if $\hyperlim{}{\rho}\sum_{n=0}^{N}a_{n}=+\infty\,(-\infty)$,
we say that $\hyperlim{}{\rho}\sum_{n=0}^{N}a_{n}$ \emph{diverges}
to $+\infty$ $(-\infty)$.
\end{defn}

Note that if $N$, $M\in\N$, then $\sum_{n=N}^{M}a_{n}=a_{N}+\dots+a_{M}\in\RCcomplexrho$
is the usual finite sum. In particular, we have the following properties:
\begin{lem}
\label{lem:extensionN}If $N=[N_{\eps}]\in\hypNr$, with $N_{\eps}\in\N$,
and $(a_{n})_{n}=[a_{n\eps}]\in\RCcomplexrho_{{\rm s}}$, then $a_{N}:=\left[a_{N_{\eps},\eps}\right]\in\RCcomplexrho$
is well defined. That is, any sequence $(a_{n})_{n}\in\RCcomplexrho_{{\rm s}}$
can be extended from $\N$ to the entire set $\hypNr$ of hyperfinite
numbers.
\end{lem}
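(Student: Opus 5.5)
The idea is to write the single term $a_{N}$ as a difference of two hypersums, and then use the two defining properties of $\RCcomplexrho_{{\rm s}}$. Moderateness of the class representative will come from \eqref{eq:moderateseries}, and independence of representatives will come from \eqref{eq:negligibleseries}. Concretely, for $N_{\eps}\ge1$ I would use
\[
a_{N_{\eps},\eps}=\sum_{n=0}^{N_{\eps}}a_{n\eps}-\sum_{n=0}^{N_{\eps}-1}a_{n\eps},
\]
and for $N_{\eps}=0$ simply $a_{0\eps}=\sum_{n=0}^{0}a_{n\eps}$.

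\textbf{Moderateness.} Take $N=[N_{\eps}]\in\hypNr$. Then $N+1\in\hypNr$, and so is $N':=[\max(N_{\eps}-1,0)]$, since this net is still $\rho$-moderate with values in $\N$. By Lem.~\ref{lem:rpi-rni}, ${\rm ni}(N)_{\eps}=N_{\eps}$ for $\eps$ small, and similarly for $N'$. Applying \eqref{eq:moderateseries} to $N$ and to $N'$ makes both partial sums $\rho$-moderate. On the set of $\eps$ where $N_{\eps}\ge1$, their difference is exactly $a_{N_{\eps},\eps}$. Where $N_{\eps}=0$, the term $a_{0\eps}$ is itself the partial sum up to ${\rm ni}(0)_{\eps}=0$, hence also moderate. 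Taking the maximum of the two bounds, which are uniform in $\eps$ small, gives $(a_{N_{\eps},\eps})\in\CC_{\rho}$.

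\textbf{Independence of representatives.} There are two choices to control.
\begin{enumerate}
\item If $[N_{\eps}]=[\bar N_{\eps}]$ with $N_{\eps},\bar N_{\eps}\in\N$, then Lem.~\ref{lem:rpi-rni} gives $N_{\eps}=\bar N_{\eps}$ for $\eps$ small, so nothing changes.
\item If $(a_{n\eps})\sim_{\rho}(\bar a_{n\eps})$, apply \eqref{eq:negligibleseries} with lower and upper limits both equal to $N$. This directly gives $(a_{N_{\eps},\eps}-\bar a_{N_{\eps},\eps})\sim_{\rho}0$.
\end{enumerate}
Together these show that $a_{N}$ is well defined.

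\textbf{Extension and main obstacle.} For $N\in\N$ we have ${\rm ni}(N)_{\eps}=N$ for $\eps$ small, so $a_{N}=[a_{N\eps}]$ coincides with the ordinary term, and the map is indeed an extension. The only delicate point is bookkeeping: ${\rm ni}(N)_{\eps}$ agrees with $N_{\eps}$ only for $\eps$ small, and the case split $N_{\eps}=0$ versus $N_{\eps}\ge1$ may vary with $\eps$. Both issues are handled by the $\forall^{0}\eps$ quantifiers, so I expect no real difficulty.
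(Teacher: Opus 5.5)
Your proposal is correct and follows the paper's proof. Moderateness comes from writing $a_{N_\eps,\eps}$ as the difference of two partial sums, each moderate by \eqref{eq:moderateseries}, and independence from the representative of $(a_n)_n$ comes from \eqref{eq:negligibleseries} with $M=N$. Your extra treatment of the $\eps$ with $N_\eps=0$ and of the choice of representative of $N$ (via Lem.~\ref{lem:rpi-rni}) makes explicit details the paper leaves implicit.
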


\begin{proof}
In fact, if $(N_{\eps})\in\N_{\rho}$, then from $(a_{n})_{n}\in\RCcomplexrho_{{\rm s}}$
we get the existence of $Q_{i}\in\N$ such that
\[
\left|a_{N_{\eps},\eps}\right|=\left|\sum_{n=0}^{N_{\eps}}a_{n\eps}-\sum_{n=0}^{N_{\eps}-1}a_{n\eps}\right|\leq\rho_{\eps}^{-Q_{1}}+\rho_{\eps}^{-Q_{2}}.
\]
Finally, if $(a_{n})_{n}=\left[a_{n\eps}\right]=\left[\bar{a}_{n\eps}\right]\in\RCcomplexrho_{{\rm s}}$,
then directly from \eqref{eq:negligibleseries} with $M=N$ we get
$\left[\left(a_{{\rm ni}(N)_{\eps},\eps}\right)_{n,\eps}\right]=\left[\left(\bar{a}_{{\rm ni}(N)_{\eps},\eps}\right)_{n,\eps}\right]$.
Therefore, $a_{N}:=[a_{N_{\eps},\eps}]\in\RCcomplexrho$ is well-defined.
In particular, this applies with $N=n\in\N$, so that any equivalence
class $(a_{n})_{n}=\left[\left(a_{n\eps}\right)_{n,\eps}\right]\in\RCcomplexrho_{{\rm s}}$
also defines an ordinary sequence $\left(a_{n}\right)_{n\in\N}=\left(\left[a_{n\eps}\right]\right)_{n\in\N}$
of $\RCcomplexrho$. On the other hand, let us explicitly note that
if $\left(a_{n}\right)_{n\in\N}=\left(\bar{a}_{n}\right)_{n\in\N}$,
i.e. if $a_{n}=\bar{a}_{n}$ for all $n\in\N$, then not necessarily
\eqref{eq:negligibleseries} holds, i.e. we can have $(a_{n})_{n}\neq(\bar{a}_{n})_{n}$
as elements of the quotient module $\RCcomplexrho_{{\rm s}}$.
\end{proof}
\begin{example}
\label{exa:hypseries}The following examples of convergent hyperseries
justify our definition of hyperseries by recovering classical examples
such as geometric and exponential complex hyperseries. We recall that
this is not possible using classical series in a non-Archimedean setting.
\begin{enumerate}
\item Let $N=[N_{\eps}]\in\hypNr$, where $N_{\eps}\in\N$ for all $\eps$,
then $\sum_{n=N}^{N}a_{n}=\left[a_{N_{\eps},\eps}\right]=a_{N}\in\RCcomplexrho$.
We recall that $a_{N}=\left[a_{N_{\eps},\eps}\right]$ is the extension
of $(a_{n})_{n}\in\RCcomplexrho_{{\rm s}}$ (see Lem. \ref{lem:extensionN}).
\item \label{enu:geometryconverge}For all $k\in\RCcomplexrho$, $0<|k|<1$,
we have
\begin{equation}
\hypersum{}{\rho}k^{n}=\frac{1}{1-k}.\label{eq:geometricseries}
\end{equation}
We first note that $|k^{n}|\leq1$ for all $n\in\N$, so that $(k^{n})_{n}\in\RCcomplexrho_{{\rm s}}$.
Now,
\[
\hypersum{}{\rho}k^{n}=\hyperlimarg{}{\rho}{N}\sum_{n=0}^{N}k^{n}=\hyperlimarg{}{\rho}{N}\frac{1-k^{N+1}}{1-k}.
\]
But $|k|^{N+1}<\diff\rho^{q}$ if and only if $(N+1)\log|k|<q\log\diff\rho$.
Since $0<|k|<1$, we have $\log|k|<0$ and we obtain $N>q\frac{\log d}{\log|k|}-1$.
It suffices to take $M_{\eps}:={\rm int}\left(q\frac{\log\rho_{\eps}}{\log|k|_{\eps}}\right)$
in the definition of hyperlimit.
\item \label{enu:geometrydiverge}Let $k\in\RCrealrho_{>0}$ such that $|k|\sbpt{>}1$
then the hyperseries $\hypersum{}{\rho}k^{n}$ is not convergent.
In fact, by contradiction, in the opposite case we would have $\sum_{n=0}^{N}k^{n}\in(l-1,l+1)$
for some $l\in\RCrealrho$ for all $N$ sufficiently large, but this
is impossible because for all fixed $L\in\RCrealrho_{>0}$ and for
$N$ sufficiently large,
\[
\sum_{n=0}^{N}k^{n}=\hyperlimarg{}{\rho}{N}\frac{1-k^{N+1}}{1-k}\sbpt{>}L,
\]
because $\hyperlimarg{}{\rho}{N}k^{N+1}=_{s}+\infty$.
\item \label{enu:expHyperseries}For all $z\in\RCcomplexrho$ such that
$|z|<-K\log\diff\rho$, for some $K\in\R$, we have $\hypersum{}{\rho}\frac{z^{n}}{n!}=e^{z}$.
Since $|z|<-K\log\diff\rho$, we have $|z_{\eps}|\leq-K\log\rho_{\eps}$
for $\eps$ small. Thereby, $\frac{|z_{\eps}|^{n}}{n!}\leq e^{-K\log\rho_{\eps}}=\rho_{\eps}^{-K}$
for all $n\in\N$ and thus $\left(\frac{z^{n}}{n!}\right)_{n}\in\RCcomplexrho_{{\rm s}}$.
For all $N=[N_{\eps}]\in\hypNr$, $N_{\eps}\in\N$, and all $\eps$,
we have
\begin{equation}
\sum_{n=0}^{N_{\eps}}\frac{z_{\eps}^{n}}{n!}=e^{z_{\eps}}-\sum_{n=N_{\eps}+1}^{+\infty}\frac{z_{\eps}^{n}}{n!}.\label{eq:tailofe}
\end{equation}
Now, take $N\in\hypNr$ such that $\frac{-K\log\diff\rho}{N+1}<\frac{1}{2}$,
so that we can assume $N_{\eps}+1>2\left(\log\rho_{\eps}^{-K}\right)$
for all $\eps$. We have $\left|\sum_{n=N_{\eps}+1}^{+\infty}\frac{z_{\eps}^{n}}{n!}\right|\leq\sum_{n>N_{\eps}}\frac{\left(\log\rho_{\eps}^{-K}\right)^{n}}{n!}$,
and for all $n\geq N_{\eps}$, we have (by induction)
\[
\frac{\left(\log\rho_{\eps}^{-K}\right)^{n+1}}{(n+1)!}<\frac{1}{2^{n+1}}.
\]
Therefore $\left|\sum_{n=N_{\eps}+1}^{+\infty}\frac{z_{\eps}^{n}}{n!}\right|\leq\sum_{n>N_{\eps}}\frac{1}{2^{n}}$
and hence $\hyperlimarg{}{\rho}{N}\sum_{n=N+1}^{+\infty}\frac{z^{n}}{n!}=0$
by \eqref{eq:geometricseries}. This and \eqref{eq:tailofe} yield
the conclusion.
\end{enumerate}
\end{example}

The following result allows us to obtain hyperseries by considering
$\eps$-wise convergence of its summands. Its proof is clearly very
similar to that of Thm.~\ref{thm:epslimitofsequence}, but with a
special attention to the condition $(a_{n})_{n}\in\RCcomplexrho_{{\rm s}}$.
\begin{thm}
\label{thm:epslimitofseries}Let $(a_{n,\eps})_{n,\eps}:\N\times I\ra\CC$.
Assume that for all $\eps$, the standard series $\sum_{n=0}^{+\infty}a_{n,\eps}$
converges to $s_{\eps}\in\CC$, i.e.
\begin{equation}
\forall\eps\,\forall q\in\N\,\exists M_{\eps q}\in\N_{>0}\,\forall N\geq M_{\eps q}:\left|\sum_{n=0}^{N}a_{n,\eps}-s_{\eps}\right|<\rho_{\eps}^{q},\label{eq:epsconvergent-1}
\end{equation}
such that $l_{\eps}\in\CC$ and (without loss of generality)
\begin{equation}
M_{\eps q}\leq M_{\eps q+1}\quad\forall\eps\forall q.\label{eq:uniformlyincrease}
\end{equation}
Set $M_{\eps}:=M_{\eps q_{\eps}^{-1}}^{-1}$ . Then, if $\left(M_{\eps}\right)\in\R_{\rho}$:
\begin{enumerate}
\item $M_{q}:=[M_{\eps q}]\in\hypNr$ for all $q\in\N$;
\item If $(s_{\eps})\in\CC_{\rho}$ then there exists $M\in\hypNs$ such
that
\begin{enumerate}
\item $s_{M}:=\left[\sum_{n=0}^{M_{\eps}}a_{n,\eps}\right]\in\RCcomplex{\sigma}$;
\item $(a_{n+M})_{n}:=\left[\left(a_{n+M_{\eps},\eps}\right)_{n,\eps}\right]_{s}\in\RCcomplex{\sigma}_{{\rm s}}$;
\item $s=s_{M}+\hypersum{}{\sigma}{a_{n+M}}$.
\end{enumerate}
\end{enumerate}
\end{thm}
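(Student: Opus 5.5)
The statement contains some typographical inconsistencies. I will read $M_{\eps}:=M_{\eps,\rho_{\eps}^{-1}}$, i.e.\ the threshold index evaluated at the $\eps$-dependent precision $q_\eps=\rm{int}(\rho_\eps^{-1})$, as in Thm.~\ref{thm:epslimitofsequence}. I will also read $\sigma=\rho$, since the assumption $(M_\eps)\in\R_\rho$ is made precisely so that one gauge suffices. The plan mirrors the proof of Thm.~\ref{thm:epslimitofsequence}, applied to the partial-sum sequence $S_{N,\eps}:=\sum_{n=0}^{N}a_{n,\eps}$ with limit $s_\eps$.

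\textbf{Step 1.} For every fixed $q\in\N$ we have $q\le\rho_\eps^{-1}$ for $\eps$ small. Monotonicity \eqref{eq:uniformlyincrease} then gives $M_{\eps q}\le M_\eps$ for $\eps$ small. Hence $(M_{\eps q})_\eps$ is $\rho$-moderate, and $M_q=[M_{\eps q}]\in\hypNr$, which proves (i). Set $M:=[M_\eps]\in\hypNr$.

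\textbf{Step 2.} For (a), use \eqref{eq:epsconvergent-1} with $q=1$. Since $M_\eps\ge M_{\eps1}$, we get $|S_{M_\eps,\eps}-s_\eps|<\rho_\eps$. Moderateness of $(s_\eps)$ then gives $(S_{M_\eps,\eps})\in\CC_\rho$, so $s_M$ is well defined.

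\textbf{Step 3.} For (b), I must check \eqref{eq:moderateseries} for the shifted net $b_{n\eps}:=a_{n+M_\eps,\eps}$. For $N=[N_\eps]\in\hypNr$ we have
\[
\sum_{n=0}^{N_\eps}b_{n\eps}=S_{N_\eps+M_\eps,\eps}-S_{M_\eps-1,\eps}.
\]
Both indices are at least $M_\eps-1\ge M_{\eps1}-1$; to avoid the off-by-one I would define $M_\eps$ via $M_{\eps,\rho_\eps^{-1}}+1$. Then each partial sum lies within $\rho_\eps$ of $s_\eps$, so the difference is bounded by $2\rho_\eps$ and is certainly moderate. Nothing needs to be checked for well-definedness, since we simply pick the class of this particular net.

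\textbf{Step 4.} For (c), note that
\[
s_M+\sum_{n=0}^{N}a_{n+M}=\bigl[S_{N_\eps+M_\eps,\eps}\bigr].
\]
Given $q\in\N$, take $\bar N:=M_q$. If $N\ge\bar N$ in $\hypNr$, then $N_\eps+M_\eps\ge M_{\eps q}$ for $\eps$ small, so $|S_{N_\eps+M_\eps,\eps}-s_\eps|<\rho_\eps^q$, i.e.\ the difference from $s$ is $<\diff\rho^q$. Here $\bar N=0$ would already suffice, because $M_\eps\ge M_{\eps q}$ eventually.

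\textbf{Main obstacle.} The delicate point is translating a hypernatural inequality $N\ge \bar N$ into $N_\eps\ge\bar N_\eps$ for $\eps$ small. This requires using representatives $N_\eps\in\N$ (Lem.~\ref{lem:rpi-rni}) and choosing $<$ versus $\le$ carefully, since inequalities in $\RCrealrho$ only hold up to negligible nets. The rounding lemma handles this because both sides are integer nets. With that settled, the shifted hyperseries converges to $s-s_M$, and this is exactly (c).
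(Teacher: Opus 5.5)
Your argument is essentially the paper's proof: monotonicity of $M_{\eps q}$ gives $M_\eps\ge M_{\eps q}$ eventually, moderateness of $s_M$ and of the shifted partial sums follows from their closeness to $s_\eps$, and convergence follows from the thresholds; you also read $\sigma=\rho$ exactly as the paper's proof does, and your $+1$ shift of $M_\eps$ repairs the off-by-one that the paper glosses over by silently treating $s_{M_\eps}$ as $\sum_{n=0}^{M_\eps-1}a_{n,\eps}$. The one slip is in Step 4, where, with $s_M=[S_{M_\eps,\eps}]$, your identity double-counts $a_M$, i.e.\ $s_M+\sum_{n=0}^{N}a_{n+M}=[S_{N_\eps+M_\eps,\eps}]+[a_{M_\eps,\eps}]$; this is harmless, because after your shift $[a_{M_\eps,\eps}]=[S_{M_\eps,\eps}-S_{M_\eps-1,\eps}]=0$ (both partial sums lie within $\rho_\eps^q$ of $s_\eps$ for every $q$ and $\eps$ small), so (c) still holds.
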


\begin{proof}
Since $\forall q\in\N\,\forall^{0}\eps:q\leq\rho_{\eps}^{-1}$, \eqref{eq:uniformlyincrease}
implies
\begin{equation}
\forall q\in\N\,\forall^{0}\eps:M_{\eps}\geq M_{\eps q}.\label{eq:Meps-1}
\end{equation}
We can hence $M:=[M_{\eps}]\in\hypNr$, $M_{q}:=[M_{\eps q}]\in\hypNr$
because of the assumption $\left(M_{\eps}\right)\in\R_{\rho}$ and
of \eqref{eq:Meps-1}. Since for all $\eps$ condition \eqref{eq:epsconvergent-1}
yields $\left|\sum_{n=0}^{N}a_{n,\eps}-s_{\eps}\right|<\rho_{\eps}^{q}<\sigma_{\eps}^{q}<1$
for all $N\geq M_{\eps q}$ and if $N=M$, this gives $s_{M}=\left[\sum_{n=0}^{M_{\eps}}a_{n,\eps}\right]\in\RCcomplex{\rho}$,
because we assumed that $s:=\left[s_{\eps}\right]\in\RCcomplex{\rho}$.
If $N\in\hypNr$, then $\text{ni}(N)_{\eps}+M_{\eps}\geq M_{\eps}$
for all $\eps$, and hence 
\[
\left|\sum_{n=0}^{M_{\eps}-1}a_{n,\eps}+\sum_{n=M_{\eps}}^{{\rm ni}(N)_{\eps}+M_{\eps}}a_{n,\eps}-s_{\eps}\right|=\left|s_{M_{\eps}}+\sum_{n=0}^{{\rm ni}(N)_{\eps}}a_{n+M_{\eps},\eps}-s_{\eps}\right|<\rho_{\eps}^{q}<\sigma_{\eps}^{q}<1.
\]
This shows that $\left[\sum_{n=0}^{{\rm ni}(N)_{\eps}}a_{n+M_{\eps},\eps}\right]\in\RCcomplex{\rho}$,
because $s_{M}$, $s\in\RCcomplex{\rho}$, i.e.~$(a_{n+M})_{n}\in\RCcomplex{\rho}_{s}$.
Similarly, proceeding as above, we can prove that $\left|\sum_{n=0}^{N}a_{n+M}-s-s_{M}\right|<\diff\rho^{q}$.
\end{proof}
Once again, in some cases, it could happen that $\rho_{\eps}^{-1}<M_{\eps q}$
for all $q\in\N$ at least on some subpoint $L\subseteq_{0}I$. In
these cases, we have to consider the suitable gauge $\sigma$, e.g.~taking
$\sigma_{\eps}:=\exp\left(-\rho_{\eps}^{-\rho_{\eps}^{-1}}\right)\leq M_{\eps q}^{-1}$.

We now study some basic properties of hyperfinite sums \eqref{eq:hyperfinitesum}
and hyperseries.
\begin{lem}
\label{lem:hyperfinitesum}Let $(a_{n})_{n}\in\RCcomplexud{\rho}{}_{s}$,
and $M$, $N\in\hypNr$, then 
\begin{equation}
\sum_{n=0}^{N+M}a_{n}-\sum_{n=0}^{N}a_{n}=\sum_{n=N+1}^{N+M}a_{n}.\label{eq:hyperfinitelemma}
\end{equation}
\end{lem}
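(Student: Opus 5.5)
The plan is to reduce the identity to the ordinary finite telescoping identity for each $\eps$, and then pass to equivalence classes using the definition \eqref{eq:hyperfinitesum} of hypersums.

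First, fix representatives. Write $N=[N_{\eps}]$ and $M=[M_{\eps}]$ with $N_{\eps}:={\rm ni}(N)_{\eps}\in\N$ and $M_{\eps}:={\rm ni}(M)_{\eps}\in\N$. Write $(a_{n})_{n}=[a_{n\eps}]$. Since $N+M=[N_{\eps}+M_{\eps}]$ and $N_{\eps}+M_{\eps}\in\N$, Lem.~\ref{lem:rpi-rni} gives ${\rm ni}(N+M)_{\eps}=N_{\eps}+M_{\eps}$ for $\eps$ small. In the same way, $N+1\in\hypNr$ and ${\rm ni}(N+1)_{\eps}=N_{\eps}+1$ for $\eps$ small.

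Next, check that all three hypersums are well-defined elements of $\RCcomplexrho$. The left-hand sums lie in $\RCcomplexrho$ by the moderateness condition \eqref{eq:moderateseries}, applied with $N$ and with $N+M$. The right-hand sum is independent of the representative of $(a_{n})_{n}$ by \eqref{eq:negligibleseries}. Its moderateness follows from the $\eps$-wise identity below, since it is a difference of two moderate nets.

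The core step is the classical identity, valid for every $\eps$:
\[
\sum_{n=0}^{N_{\eps}+M_{\eps}}a_{n\eps}-\sum_{n=0}^{N_{\eps}}a_{n\eps}=\sum_{n=N_{\eps}+1}^{N_{\eps}+M_{\eps}}a_{n\eps}.
\]
Taking equivalence classes, and using that the nets of indices ${\rm ni}(N+M)_{\eps}$, ${\rm ni}(N+1)_{\eps}$ coincide with $N_{\eps}+M_{\eps}$, $N_{\eps}+1$ for $\eps$ small (changing a net for $\eps$ outside a neighbourhood of $0$ does not change its class), gives \eqref{eq:hyperfinitelemma}.

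The only delicate point is this index bookkeeping. The definition \eqref{eq:hyperfinitesum} uses the chosen representatives ${\rm ni}(\cdot)$, and these agree with the sums of the chosen representatives only eventually in $\eps$. I will handle this with Lem.~\ref{lem:rpi-rni}: two nets in $\N_{\rho}$ representing the same hypernatural coincide for $\eps$ small. One could also note the edge case $M=0$, where the right-hand side is an empty sum. For $\eps$ with $M_{\eps}=0$ that sum is read as $0$, which is consistent with the $\eps$-wise identity.
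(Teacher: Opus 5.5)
Your proposal is correct and follows the paper's proof: both reduce \eqref{eq:hyperfinitelemma} to the $\eps$-wise finite telescoping identity and then pass to equivalence classes. Your extra bookkeeping uses Lem.~\ref{lem:rpi-rni} to identify ${\rm ni}(N+M)_{\eps}$ and ${\rm ni}(N+1)_{\eps}$ with $N_{\eps}+M_{\eps}$ and $N_{\eps}+1$ for $\eps$ small, which makes explicit what the paper's ``for simplicity'' choice of representatives $N_{\eps},M_{\eps}\in\N$ leaves implicit.
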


\begin{proof}
For simplicity, if $N=[N_{\eps}]$, $M=[M_{\eps}]$ with $N_{\eps}$,
$M_{\eps}\in\N$ for all $\eps$, then $\sum_{n=0}^{N}a_{n}=\left[\sum_{n=0}^{N_{\eps}}a_{n\eps}\right]\in\RCcomplexrho$
and
\[
\sum_{n=0}^{N+M}a_{n}=\left[\sum_{n=0}^{N_{\eps}+M_{\eps}}a_{n\eps}\right]=\left[\sum_{n=0}^{N_{\eps}}a_{n\eps}+\sum_{n=N_{\eps}+1}^{N_{\eps}+M_{\eps}}a_{n\eps}\right]=\sum_{n=0}^{N}a_{n}+\sum_{n=N+1}^{N+M}a_{n}.
\]
\end{proof}
\begin{lem}
\label{lem:limitseq}If $\hypersum{}{\rho}a_{n}$ is convergent and
$M\in\hypNr$, then 
\[
\hypersum{}{\rho}a_{n}=\hyperlimarg{}{\rho}{N}\sum_{n=0}^{N+M}a_{n}.
\]
Therefore, from \eqref{eq:hyperfinitelemma}, we also have
\[
\hyperlimarg{}{\rho}{N}\sum_{n=N}^{N+M}a_{n}=0.
\]
In particular, $\hyperlim{}{\rho}a_{n}=0$.
\end{lem}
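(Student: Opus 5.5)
Write $s:=\hypersum{}{\rho}a_{n}$ and $S_{N}:=\sum_{n=0}^{N}a_{n}$ for $N\in\hypNr$. We show that the shifted hypersequence $N\mapsto S_{N+M}$ has the same hyperlimit $s$. Fix $q\in\N$. By Def.~\ref{def:convergenthyperseries} there exists $\bar M\in\hypNr$ such that $|S_{K}-s|<\diff\rho^{q}$ for all $K\in\hypNr_{\ge\bar M}$. Since $M\ge0$, for every $N\in\hypNr_{\ge\bar M}$ we have $N+M\in\hypNr$ and $N+M\ge\bar M$, hence $|S_{N+M}-s|<\diff\rho^{q}$. The only things to check are that $\hypNr$ is closed under addition (sum of representatives in $\N$) and that the order is preserved. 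This gives the first claim.

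For the second claim, apply Lem.~\ref{lem:hyperfinitesum} with $N-1$ in place of $N$ when $N\ge1$, i.e.\ $N\in\hypNr_{\ge1}$, so that $N-1\in\hypNr$. This requires a representative with $N_{\eps}\ge1$; that holds since $N\ge1$ and Lem.~\ref{lem:rpi-rni} lets us take $N_{\eps}=\mathrm{rpi}(x_{\eps})\ge1$ for $\eps$ small. We obtain
\[
\sum_{n=N}^{N+M}a_{n}=S_{N+M}-S_{N-1}.
\]
By the first part, $S_{N+M}\to s$. The hypersequence $N\mapsto S_{N-1}$ also tends to $s$: given $\bar M$ as above, it is enough to take $N\ge\bar M+1$. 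Then Thm.~\ref{thm:classicaloperations} gives that the difference tends to $s-s=0$. Values at $N=0$ are irrelevant to the hyperlimit, since we may restrict to $N\ge1$ in the definition.

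Finally, taking $M=0$ gives $\sum_{n=N}^{N}a_{n}=a_{N}$ by Example~\ref{exa:hypseries}(i), so $\hyperlim{}{\rho}a_{n}=0$. The only mildly delicate point is the index bookkeeping with $N-1$ and the choice of representatives in $\N$. Everything is otherwise a direct transcription of the classical argument, so I expect no real obstacle. If one prefers to avoid $N-1$, one can instead use $S_{N+M}-S_{N}=\sum_{N+1}^{N+M}a_{n}$ and shift the index.
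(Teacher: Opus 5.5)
Your proof is correct and takes the same approach as the paper. The first claim rests on the observation that $N\ge\bar M$ and $M\ge0$ give $N+M\ge\bar M$, exactly as in the paper. The other two claims, which the paper leaves as immediate consequences of Lem.~\ref{lem:hyperfinitesum}, you spell out correctly. One small bookkeeping point: to obtain $\sum_{n=N}^{N+M}a_n=S_{N+M}-S_{N-1}$ from that lemma you must replace $N$ by $N-1$ and also $M$ by $M+1$.
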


\begin{proof}
Let $s=\hypersum{}{\rho}a_{n}$. Directly from the definition of convergent
hyperseries \eqref{eq:convergenthypseries}, we have
\[
\forall q\in\N\,\exists K\in\hypNr\,\forall N\in\hypNr_{\geq K}:\left|\sum_{n=0}^{N}a_{n}-s\right|<\diff\rho^{q}.
\]
In particular, if $N\in\hypNr_{\geq K}$, then also $N+M\in\hypNr_{\geq K}$,
which is our conclusion.
\end{proof}
Directly from Lem.~\ref{lem:hyperfinitesum}, we also have:
\begin{cor}
\label{cor:tailofhyperseries}Let $\hypersum{}{\rho}a_{n}$ be a convergent
hyperseries. Then adding a hyperfinite number of terms have no effect
on the convergence of the hyperseries, that is
\[
\sum_{n=0}^{K}a_{n}+\hyperlimarg{}{\rho}{N}\sum_{n=K+1}^{N}a_{n}=\hypersum{}{\rho}a_{n}\quad\forall K\in\hypNr.
\]
\end{cor}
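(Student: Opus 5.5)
We fix $K\in\hypNr$ and reduce the corollary to Lem.~\ref{lem:hyperfinitesum} together with the algebra of hyperlimits in Thm.~\ref{thm:classicaloperations}. Let $s:=\hypersum{}{\rho}a_{n}$. The plan has two steps: first show that the hyperlimit $\hyperlimarg{}{\rho}{N}\sum_{n=K+1}^{N}a_{n}$ exists and equals $s-\sum_{n=0}^{K}a_{n}$, then rearrange.

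For every $N\in\hypNr_{\geq K}$ we write $N=K+M$ with $M:=N-K$. We must check that $M\in\hypNr$. Take representatives $K=[K_{\eps}]$ and $N=[N_{\eps}]$ with $K_{\eps},N_{\eps}\in\N$. From $N\ge K$ in $\RCrealrho$ we get $N_{\eps}-K_{\eps}\ge-z_{\eps}$ for some negligible $(z_{\eps})$. Since these are integers, this gives $N_{\eps}\ge K_{\eps}$ for $\eps$ small. So $M_{\eps}:=N_{\eps}-K_{\eps}\in\N$ for small $\eps$, and we may modify $(M_{\eps})$ for the remaining $\eps$ without changing its class. Hence $M=[M_{\eps}]\in\hypNr$.

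Now Lem.~\ref{lem:hyperfinitesum} gives
\[
\sum_{n=K+1}^{N}a_{n}=\sum_{n=0}^{N}a_{n}-\sum_{n=0}^{K}a_{n}\qquad\forall N\in\hypNr_{\geq K}.
\]
The second term on the right is a constant element of $\RCcomplexrho$, and the first converges to $s$. Fix $q\in\N$. Take $M_{q}$ from the convergence of the hyperseries, and use the threshold $\max(M_{q},K)$; this maximum lies in $\hypNr$, e.g.\ by taking the $\eps$-wise maximum of representatives, or by directedness in Rem.~\ref{rem:hypNr}. For all $N$ beyond this threshold we then have
\[
\left|\sum_{n=K+1}^{N}a_{n}-\Bigl(s-\sum_{n=0}^{K}a_{n}\Bigr)\right|<\diff\rho^{q}.
\]
This holds for every $q$, so the hyperlimit exists and has the stated value. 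Adding $\sum_{n=0}^{K}a_{n}$ to both sides yields the claimed identity.

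Only the terms with $N\ge K$ matter for the hyperlimit, so the values of $\sum_{n=K+1}^{N}a_{n}$ for $N<K$ (empty sums, defined $\eps$-wise by \eqref{eq:hyperfinitesum}) play no role. The only delicate point is the bookkeeping that $N-K\in\hypNr$ and that Lem.~\ref{lem:hyperfinitesum} applies at the level of representatives. Both follow from the integrality argument above, so I expect no real obstacle.
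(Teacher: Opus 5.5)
Your proposal is correct and follows essentially the same route as the paper, which derives the corollary directly from Lem.~\ref{lem:hyperfinitesum} together with the tail/shift observation of Lem.~\ref{lem:limitseq}. Your check that $N-K\in\hypNr$ whenever $N\ge K$, and your choice of the threshold $\max(M_q,K)$, simply make explicit the reindexing and bookkeeping that the paper leaves implicit.
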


Moreover, below we also have the usual connection between convergence
and absolute convergence of hyperseries:
\begin{thm}
\label{thm:absoluteconverge}If the hyperseries $\hypersum{}{\rho}|a_{n}|$
converges then $\hypersum{}{\rho}a_{n}$ converges. Moreover,
\[
\left|\,\hypersum{}{\rho}a_{n}\right|\leq\hypersum{}{\rho}\left|a_{n}\right|.
\]
\end{thm}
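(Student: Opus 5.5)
We will use the Cauchy criterion for hypersequences (Thm.~\ref{thm:Cauchycriteria}) applied to the partial hypersums $S_N:=\sum_{n=0}^{N}a_n$, $N\in\hypNr$, comparing them with the partial hypersums $T_N:=\sum_{n=0}^{N}|a_n|$. A preliminary step is to make sure that both objects are well defined. If $(a_n)_n=[a_{n\eps}]\in\RCcomplexrho_{\rm s}$, we take $(|a_n|)_n$ to mean $[|a_{n\eps}|]$. The hypothesis that $\hypersum{}{\rho}|a_n|$ converges includes the statement that this class lies in $\RCrealrho_{\rm s}$, i.e.~that it is moderate over hypersums and independent of representatives. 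Conversely, $(a_n)_n\in\RCcomplexrho_{\rm s}$ follows from $\left|\sum_{n=0}^{{\rm ni}(N)_\eps}a_{n\eps}\right|\le\sum_{n=0}^{{\rm ni}(N)_\eps}|a_{n\eps}|$, since the right-hand side is $\rho$-moderate for every $N\in\hypNr$. So moderateness of $(a_n)_n$ costs nothing.

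The main step is an $\eps$-wise triangle inequality on hyperfinite blocks. For $N,M\in\hypNr$ with $M\ge N$, choose representatives with ${\rm ni}(M)_\eps\ge{\rm ni}(N)_\eps$ for $\eps$ small; this is possible up to a negligible adjustment, and in any case the inequality holds for $\eps$ small. By Lem.~\ref{lem:hyperfinitesum} we then have
\[
|S_M-S_N|=\Bigl[\Bigl|\sum_{n={\rm ni}(N)_\eps+1}^{{\rm ni}(M)_\eps}a_{n\eps}\Bigr|\Bigr]\le\Bigl[\sum_{n={\rm ni}(N)_\eps+1}^{{\rm ni}(M)_\eps}|a_{n\eps}|\Bigr]=T_M-T_N .
\]
Because the hyperseries of $|a_n|$ converges, $(T_N)_N$ is a Cauchy hypersequence by Thm.~\ref{thm:Cauchycriteria}. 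So for every $q\in\N$ there is $K\in\hypNr$ such that $T_M-T_N<\diff\rho^{q}$ for all $M\ge N\ge K$. Consequently $|S_M-S_N|<\diff\rho^{q}$ for these indices. By symmetry and the directedness of $\hypNr$ (Rem.~\ref{rem:hypNr}), this covers all pairs $M,N\ge K$, even though $\hypNr$ is not totally ordered. It follows that $(S_N)_N$ is Cauchy, hence convergent, again by Thm.~\ref{thm:Cauchycriteria}.

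The main technical point is exactly this lack of total order: two indices $N,M\ge K$ need not be comparable. I would handle it by passing through $P:=\max(N,M)\in\hypNr$, taken $\eps$-wise. Then $|S_N-S_M|\le|S_P-S_N|+|S_P-S_M|<2\diff\rho^{q}$, and replacing $q$ by $q+1$ gives the required bound.

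For the inequality between the sums, note that $|S_N|\le T_N$ for every $N$, by the same $\eps$-wise triangle inequality. We then pass to the hyperlimit. Since $|\cdot|$ is sharply continuous and the order $\le$ in $\RCrealrho$ is preserved under sharp limits (the closed half-line is sharply closed), we obtain $\left|\hypersum{}{\rho}a_n\right|\le\hypersum{}{\rho}|a_n|$.
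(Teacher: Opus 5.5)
Your proof is correct and follows the paper's own argument: you compare the partial hypersums $S_N$ with $T_N$ through the $\eps$-wise triangle inequality on hyperfinite blocks, then apply the Cauchy criterion (Thm.~\ref{thm:Cauchycriteria}) in both directions. Your handling of non-comparable indices via $\max(N,M)$, and your limit argument for $\left|\hypersum{}{\rho}a_n\right|\le\hypersum{}{\rho}|a_n|$, supply details that the paper leaves implicit.
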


\begin{proof}
If we consider the hyperfinite sums $S_{N}=\sum_{n=0}^{N}a_{n}$ and
$T_{N}=\sum_{n=0}^{N}\left|a_{n}\right|$ for all $N\in\hypNr$, then
$(T_{N})_{N}\in\RCcomplexrho_{{\rm s}}$ is a Cauchy hypersequence
by Thm.~\ref{thm:Cauchycriteria}. Then by the triangle inequality
\[
\left|S_{N}-S_{M}\right|=\left|\sum_{n=N+1}^{M}a_{n}\right|\leq\sum_{n=N+1}^{M}|a_{n}|=|T_{M}-T_{N}|.
\]
Thus, $(S_{N})_{N}\in\RCcomplexrho_{{\rm s}}$ is also a Cauchy hypersequence,
so by Thm.~\ref{thm:Cauchycriteria}, it converges.
\end{proof}
In the classical direct comparison test, we need to assume a relation
of the form $a_{n}\leq b_{n}$ for all $n\in\N$ between general terms
of two real series. In case of hyperseries, we define a weaker but
natural relation in $\RCrealud{\rho}{}{}_{\mathrm{s}}$:
\begin{defn}
\label{def:orderhyperseries}Let $(a_{n})_{n}$, $(b_{n})_{n}\in\RCrealud{\rho}{}{}_{\mathrm{s}}$,
then we say that $(a_{n})_{n}\leq(b_{n})_{n}$ if 
\begin{equation}
\forall N,M\in\hypNr:\sum_{n=N}^{M}a_{n}\leq\sum_{n=N}^{M}b_{n}\;\text{in}\;\RCrealrho.\label{eq:orderhyperseries}
\end{equation}
Note explicitly, that if $M<_{L}N$ on $L\subseteq_{0}I$, then $\sum_{n=N}^{M}a_{n}=_{L}\sum_{n=N}^{M}b_{n}=_{L}0$.
Therefore, \eqref{eq:orderhyperseries} is equivalent to 
\[
\forall N,M\in\hypNr:M\geq N\implies\sum_{n=N}^{M}a_{n}\leq\sum_{n=N}^{M}b_{n}\;\text{in}\;\RCrealrho.
\]
Note that the order relation in $\RCrealrho_{\mathrm{u}}$ is $[a_{n\eps}]_{\mathrm{u}}\le[b_{n\eps}]_{\mathrm{u}}$
if for some $[z_{n\eps}]_{\mathrm{u}}=0$, we have
\[
\forall^{0}\eps\,\forall n\in\N:\ a_{n\eps}\le b_{n\eps}+z_{n\eps}.
\]
We will implicitly use this relation in the ratio Thm.~\ref{thm:ratiotest}
and root Thm.~\ref{thm:roottest}.
\end{defn}

\noindent We have that $(\RCrealud{\rho}{}{}_{s},\leq)$ is an ordered
$\RCrealrho$-module, and if $(a_{n})_{n}\geq0$, then $N\in\hypNr\mapsto\sum_{n=0}^{N}a_{n}\in\RCrealrho$
is increasing.

The direct comparison test for hyperseries can now be stated as follows:
\begin{thm}
\label{thm:directcomparisonhyperseries}Let $\hypersum{}{\rho}a_{n}$
and $\hypersum{}{\rho}b_{n}$ be hyperseries of generalized real numbers
with $(a_{n})_{n}$, $(b_{n})_{n}\geq0$ and such that
\begin{equation}
\exists N\in\N:(a_{n+N})_{n}\leq(b_{n+N})_{n}.\label{eq:directcomparison}
\end{equation}
Then we have:
\begin{enumerate}
\item \label{enu:comparisonconverges}If $\hypersum{}{\rho}b_{n}$ is convergent
then so is $\hypersum{}{\rho}a_{n}$.
\item \label{enu:comparisondiverges}If $\hypersum{}{\rho}a_{n}$ is divergent
to $+\infty$, then so is $\hypersum{}{\rho}b_{n}$.
\end{enumerate}
\end{thm}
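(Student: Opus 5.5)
We reduce both items to the Cauchy criterion of Thm.~\ref{thm:Cauchycriteria} and to monotonicity of partial hypersums. The first step is to remove the shift $N\in\N$ in \eqref{eq:directcomparison}. By Lem.~\ref{lem:hyperfinitesum} and Cor.~\ref{cor:tailofhyperseries}, for every $K\in\hypNr$ we have $\sum_{n=0}^{K+N}a_{n}=\sum_{n=0}^{N-1}a_{n}+\sum_{n=N}^{K+N}a_{n}$. The first summand is an ordinary finite sum in $\RCrealrho$, so convergence (or divergence to $+\infty$) of $\hypersum{}{\rho}a_{n}$ is equivalent to that of the shifted hyperseries $\hypersum{}{\rho}a_{n+N}$. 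Here $(a_{n+N})_{n}=[a_{n+N,\eps}]\in\RCrealud{\rho}{}{}_{\mathrm{s}}$, and the reindexing $\sum_{n=0}^{K}a_{n+N}=\sum_{n=N}^{K+N}a_{n}$ holds $\eps$-wise. The same holds for $b$. Without loss of generality we may therefore assume $N=0$, i.e.\ $(a_{n})_{n}\le(b_{n})_{n}$ in the sense of Def.~\ref{def:orderhyperseries}.

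For \ref{enu:comparisonconverges}, we assume $\hypersum{}{\rho}b_{n}$ converges; by Thm.~\ref{thm:Cauchycriteria} its partial sums form a Cauchy hypersequence. Fix $q\in\N$ and take $M\in\hypNr$ from Def.~\ref{def:Cauchyhypsequence} for $b$. For $K\ge H\ge M$ in $\hypNr$, Lem.~\ref{lem:hyperfinitesum} gives $\sum_{n=0}^{K}a_{n}-\sum_{n=0}^{H}a_{n}=\sum_{n=H+1}^{K}a_{n}$, and the analogous identity holds for $b$. From $(a_{n})_{n}\ge0$ and \eqref{eq:orderhyperseries} we then get
\[
0\le\sum_{n=H+1}^{K}a_{n}\le\sum_{n=H+1}^{K}b_{n}<\diff\rho^{q}.
\]
The main obstacle is that $\le$ in $\hypNr$ is not total, so for arbitrary $H,K\ge M$ we do not have $K\ge H$. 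We handle this by comparing both with $\max(H,K)$, which is $[\max(H_{\eps},K_{\eps})]\in\hypNr$. Each of the two hypersums $\sum_{n=H+1}^{\max(H,K)}a_{n}$ and $\sum_{n=K+1}^{\max(H,K)}a_{n}$ is nonnegative and bounded by the corresponding $b$-hypersum, which is $<\diff\rho^{q}$. The triangle inequality then yields $|\sum_{0}^{K}a_{n}-\sum_{0}^{H}a_{n}|<2\diff\rho^{q}\le\diff\rho^{q-1}$. Note that the strict inequality $<$ in $\RCrealrho$ must be kept, not merely $\le$; this is fine because $\le$ combined with $<$ gives $<$. Hence the partial sums of $a$ are Cauchy, and Thm.~\ref{thm:Cauchycriteria} gives convergence.

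For \ref{enu:comparisondiverges}, divergence to $+\infty$ means that for every $L\in\RCrealrho_{>0}$ there is $M\in\hypNr$ with $\sum_{n=0}^{K}a_{n}>L$ for all $K\ge M$. Applying \eqref{eq:orderhyperseries} with the pair of indices $0,K$ gives $\sum_{0}^{K}b_{n}\ge\sum_{0}^{K}a_{n}>L$ for all $K\ge M$, so $\hypersum{}{\rho}b_{n}$ diverges to $+\infty$. In the shifted setting we must add back the finite sum $\sum_{n<N}b_{n}-\sum_{n<N}a_{n}$. This is a fixed element of $\RCrealrho$ and is absorbed by replacing $L$ with $L+|c|$ for the appropriate constant $c\in\RCrealrho$.
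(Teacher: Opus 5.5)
Your proof is correct and follows essentially the paper's route: you reduce to $N=0$ via the tail corollary, show that the partial sums of $(a_n)_n$ form a Cauchy hypersequence because their increments are trapped between $0$ and the corresponding increments of $(b_n)_n$, and derive (ii) from $\sum_{n=0}^{K}a_n\le\sum_{n=0}^{K}b_n$. The only difference is cosmetic. The paper bounds the increments by $B-B_N$, using monotonicity and the limit $B$, which compares both indices with a common lower index. You instead use the Cauchy property of the $b$-partial sums directly and handle the non-total order on $\hypNr$ by passing through $\max(H,K)$.
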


\begin{proof}
We first note that \eqref{eq:directcomparison} can be simply written
as $(\alpha_{n})_{n}\leq(\beta_{n})_{n}$, where $\alpha_{n}:=a_{n+N}$
and $\beta_{n}:=b_{n+N}$. Thereby, Corollary \ref{cor:tailofhyperseries}
implies that, without loss of generality, we can assume $N=0$. To
prove \ref{enu:comparisonconverges}, let us consider the partial
sum $A_{n}:=\sum_{i=0}^{n}a_{i}$ and $B_{n}:=\sum_{i=0}^{n}b_{i}$,
$n\in\hypNr$. Since $\hypersum{}{\rho}b_{n}$ is convergent, we have
$\exists\hyperlim{}{\rho}B_{n}=:B\in\RCrealrho$. The assumption $(a_{n})_{n}\leq(b_{n})_{n}$
implies $A_{n}\leq B_{n}$. The hypersequences $(A_{n})_{n\in\hypNr}$,
$(B_{n})_{n\in\hypNr}$ are increasing because $(a_{n})_{n}$, $(b_{n})_{n}\geq0$
and hence $(B-B_{n})_{n\in\hypNr}$ decreases to zero because of the
convergence assumption. Now, for all $N$, $M\in\hypNr$, $M\geq N$,
we have
\begin{equation}
A_{N}\leq A_{M}=\sum_{n=0}^{M}a_{n}=\sum_{n=0}^{N}a_{n}+\sum_{n=N+1}^{M}a_{n}\leq A_{N}+\sum_{n=N+1}^{M}b_{n}=A_{N}+(B-B_{N}).\label{eq:comparison}
\end{equation}
Thereby, given $m$, $n\geq N$, applying \eqref{eq:comparison} with
$m$, $n$, we get that both $A_{n}$, $A_{m}$ belongs to the interval
$\left[A_{N},A_{N}+(B-B_{N})\right]$, whose length $B-B_{N}$ decreases
to zero as $N\in\hypNr$ goes to infinity. This shows that $(A_{n})_{n\in\hypNr}$
is a Cauchy hypersequence, and therefore $\hypersum{}{\rho}a_{n}$
converges.

The proof of \ref{enu:comparisondiverges} follows directly from the
inequality $A_{n}\leq B_{n}$ for each $n\in\hypNr$.
\end{proof}
Similar to the classical method, the geometric hyperseries leads to
a useful test for convergence of general hyperseries.
\begin{thm}[Ratio test]
\label{thm:ratiotest}Let $(a_{n})_{n}=[a_{n\eps}]\in\RCcomplexrho_{{\rm s}}$
be such that $|a_{n}|>0$ for all $n\in\hypNr$. Assume that for some
$[k_{\eps}]\in\hypNr$, with $k_{\eps}\in\N$, we have 
\begin{equation}
\exists[L_{\eps}]\in\RCrealrho_{>0}\:\forall^{0}\eps\,\forall n\in\N_{\ge k_{\eps}}:\;\frac{\left|a_{n+1,\eps}\right|}{\left|a_{n\eps}\right|}\leq L_{\eps}.\label{eq:ratio}
\end{equation}
Then, if $L<1$, the hyperseries $\hypersum{}{\rho}a_{n}$ converges
absolutely.
\end{thm}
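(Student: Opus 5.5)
The plan is to compare $\sum|a_n|$ with a geometric hyperseries and then use Thm.~\ref{thm:absoluteconverge}. Throughout I will work with the fixed representatives $(a_{n\eps})$ and $(k_{\eps})$ of the statement. I will also assume that $L<1$ means $L_{\eps}<1$ for $\eps$ small; this is legitimate because in the invertible case $L<1$ in the order of $\RCrealrho$ gives a representative with $L_{\eps}<1$ for $\eps$ small.

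\textbf{Step 1: the $\eps$-wise bound.} Iterating \eqref{eq:ratio} gives, for $\eps$ small and all $n\in\N_{\ge k_{\eps}}$,
\[
|a_{n\eps}|\le|a_{k_{\eps},\eps}|\,L_{\eps}^{\,n-k_{\eps}}.
\]
By Lem.~\ref{lem:extensionN}, $A:=[|a_{k_{\eps},\eps}|]=|a_{k}|\in\RCrealrho$ is well defined. Hence, for all $N,M\in\hypNr$ with $M\ge N\ge k$,
\[
\sum_{n=N}^{M}|a_{n}|\le A\sum_{n=N-k}^{M-k}L^{n}=A\Bigl(\sum_{n=0}^{M-k}L^{n}-\sum_{n=0}^{N-k-1}L^{n}\Bigr)
\]
in $\RCrealrho$. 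Both the moderateness of $(|a_n|)_n$ over hypersums and these inequalities hold $\eps$-wise. This means that, after the shift by $k$, the comparison \eqref{eq:orderhyperseries} holds between $(|a_{n+k}|)_n$ and $(AL^{n})_n$.

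\textbf{Step 2: the geometric comparison series.} By Example~\ref{exa:hypseries}\ref{enu:geometryconverge}, the hyperseries $\hypersum{}{\rho}AL^{n}=\frac{A}{1-L}$ converges whenever $0<L<1$. If $L$ is not invertible, I replace $L$ by $L+\diff\rho^{q}$ for a suitable $q$, or simply by $\max(L,\frac12)$. This makes the geometric ratio invertible and still $<1$, and the bound of Step~1 remains true with the larger ratio.

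\textbf{Step 3: passing to $\sum|a_n|$.} Combining Step~1 and Step~2 through Thm.~\ref{thm:directcomparisonhyperseries}\ref{enu:comparisonconverges} should give convergence of the tail $\hypersum{}{\rho}|a_{n+k}|$. Corollary~\ref{cor:tailofhyperseries} then restores the hyperfinite initial segment $\sum_{n=0}^{k-1}|a_n|$, so $\hypersum{}{\rho}|a_n|$ converges. Thm.~\ref{thm:absoluteconverge} finally yields absolute convergence of $\hypersum{}{\rho}a_n$.

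\textbf{Main obstacle.} The direct comparison test is stated with a \emph{standard} shift $N\in\N$, whereas here the shift $k$ is hyperfinite. I therefore expect the work to be in reproving the Cauchy argument of Thm.~\ref{thm:directcomparisonhyperseries} directly, without going through the shifted sequences. For $N\ge k$ one has
\[
0\le\sum_{n=N+1}^{M}|a_{n}|\le A\,\frac{L^{N+1-k}}{1-L}.
\]
So it suffices to show $\hyperlimarg{}{\rho}{N}L^{N+1-k}=0$. This is the same logarithmic estimate as in Example~\ref{exa:hypseries}\ref{enu:geometryconverge}: take $N_{\eps}\ge k_{\eps}+q\log\rho_{\eps}/\log L_{\eps}$. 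This choice is moderate because $\log L_{\eps}$ is bounded away from $0$ once the ratio is capped as in Step~2, namely $\le\log\frac12$ after replacing $L$ by $\max(L,\frac12)$. The moderateness of $A$ and of $k$ then makes the bound tend to $0$, and Thm.~\ref{thm:Cauchycriteria} concludes.
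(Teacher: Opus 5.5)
Your proposal is correct and follows essentially the paper's route. You iterate the ratio bound $\eps$-wise to dominate $|a_n|$ by a geometric hyperseries, compare, and conclude with Thm.~\ref{thm:absoluteconverge}. Your version is in fact more careful than the paper's in two places. You keep the correct exponent $|a_k|L^{n-k}$, where the paper writes $|a_k|\sum L^{n}$. You also replace the appeal to Thm.~\ref{thm:directcomparisonhyperseries}, which is stated only for a standard shift, by a direct Cauchy estimate that handles the hyperfinite shift $k$.
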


\begin{proof}
Since $L<1$, by Example \ref{exa:hypseries}\ref{enu:geometryconverge},
the hyperseries $\hypersum{}{\rho}L^{n}$ is convergent. The ratio
assumption \eqref{eq:ratio} yields the existence of $\left(z_{\eps}\right)\sim_{\rho}0$
such that for $\eps$ small (without loss of generality, we can assume
that all the following relations, starting from \eqref{eq:ratio},
hold for all $\eps\le\eps_{0}$), we have $\frac{\left|a_{k_{\eps}+1,\eps}\right|}{\left|a_{k_{\eps},\eps}\right|}\leq L_{\eps}+z_{\eps}$
and hence $\left|a_{k_{\eps}+1,\eps}\right|\leq\left(L_{\eps}+z_{\eps}\right)\left|a_{k_{\eps},\eps}\right|$
and recursively, we have $\left|a_{k_{\eps}+N+1,\eps}\right|\leq\left(L_{\eps}+z_{\eps}\right)\left|a_{k_{\eps}+N,\eps}\right|\le\ldots\le\left(L_{\eps}+z_{\eps}\right)^{N+1}\left|a_{k_{\eps},\eps}\right|$
for $\eps\le\eps_{0}$. Now the series 
\[
\sum_{n=k_{\eps}+1}^{\infty}\left|a_{n\eps}\right|\leq\sum_{n=1}^{\infty}\left(L_{\eps}+z_{\eps}\right)^{n}\left|a_{k_{\eps},\eps}\right|.
\]
Since $\left(z_{\eps}\right)\sim_{\rho}0$ and $L<1$ then $L=\left[L_{\eps}+z_{\eps}\right]<1$,
and hence for all $N=[N_{\eps}]$, $M=[M_{\eps}]\in\hypNr$ with $N_{\eps}\le M_{\eps}$
\[
\sum_{n=\max(N,k)}^{M}\left|a_{n}\right|\leq\left|a_{k}\right|\sum_{n=\max(N,k)}^{M}\left[L_{\eps}+z_{\eps}\right]^{n}=\left|a_{k}\right|\sum_{n=\max(N,k)}^{M}L^{n}.
\]
By direct comparison test for hyperseries of generalized real numbers
(Thm.~\ref{thm:directcomparisonhyperseries}), we can conclude that
$\hypersum{}{\rho}\left|a_{n}\right|$ is convergent (and by Thm.~\ref{thm:absoluteconverge},
the hyperseries $\hypersum{}{\rho}a_{n}$ is also convergent).
\end{proof}
Proceeding as in the previous proof, i.e.~by generalizing the classical
proof for series of complex numbers, we also have the following root
test.
\begin{thm}[Root test]
\label{thm:roottest}Let $(a_{n})_{n}=[a_{n\eps}]\in\RCcomplexrho_{{\rm s}}$.
Assume that for some $[k_{\eps}]\in\hypNr$, with $k_{\eps}\in\N$,
we have 
\[
\exists[L_{\eps}]\in\RCrealrho_{>0}\,\forall^{0}\eps\,\forall n\in\hypNr_{\geq k_{\eps}}:\;\left|a_{n\eps}\right|^{\nicefrac{1}{n}}\leq L_{\eps}.
\]
Then, if $L<1$, the hyperseries $\hypersum{}{\rho}a_{n}$ converges
absolutely.
\end{thm}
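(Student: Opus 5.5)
Following the proof of the ratio test (Thm.~\ref{thm:ratiotest}), I would reduce the statement to the geometric hyperseries of Example~\ref{exa:hypseries}\ref{enu:geometryconverge}, with the direct comparison test (Thm.~\ref{thm:directcomparisonhyperseries}) and Thm.~\ref{thm:absoluteconverge} doing the rest.

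\emph{Step 1: an $\eps$-wise bound.} From $L<1$ we get that $1-L$ is invertible. Adding a negligible net to $(L_{\eps})$ if necessary, the assumption gives, for all $\eps\le\eps_{0}$ and all $n\ge k_{\eps}$ (with $n$ read as a natural index, as in \eqref{eq:ratio}),
\[
\left|a_{n\eps}\right|\le L_{\eps}^{n}.
\]
Here $0<L_{\eps}<1$ for $\eps$ small. This bound is uniform in $n$, since the quantifier $\forall^{0}\eps$ comes before $\forall n$. So the comparison nets $\left(|a_{n\eps}|\right)$ and $\left(L_{\eps}^{n}\right)$ can be handled in the uniform spirit of $\RCcomplexrho_{\mathrm{u}}$.

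\emph{Step 2: comparison of hyperfinite sums.} Take any $N=[N_{\eps}]$ and $M=[M_{\eps}]\in\hypNr$ with $N_{\eps}\le M_{\eps}$. Summing the bound of Step 1 $\eps$-wise gives
\[
\sum_{n=\max(N,k)}^{M}\left|a_{n}\right|\le\sum_{n=\max(N,k)}^{M}L^{n}.
\]
The terms $(L^{n})_{n}$ belong to $\RCrealud{\rho}{}{}_{\mathrm{s}}$ because $|L_{\eps}^{n}|\le1$. So after the hyperfinite shift by $k$ we have $(|a_{n+k}|)_{n}\le(L^{n+k})_{n}$ in the sense of Def.~\ref{def:orderhyperseries}, and both hyperseries have nonnegative terms.

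\emph{Step 3: conclusion.} By Example~\ref{exa:hypseries}\ref{enu:geometryconverge}, $\hypersum{}{\rho}L^{n}$ converges. Thm.~\ref{thm:directcomparisonhyperseries} is stated with a shift $N\in\N$, whereas our $k$ is hyperfinite. I would handle this with Corollary~\ref{cor:tailofhyperseries}, which says that adding or removing a hyperfinite block of terms does not affect convergence. I would also rerun the Cauchy-interval argument of that proof directly on the tails starting at $k$. This yields convergence of $\hypersum{}{\rho}|a_{n}|$, and then Thm.~\ref{thm:absoluteconverge} gives absolute convergence of $\hypersum{}{\rho}a_{n}$.

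\emph{Main obstacle.} The delicate point is making Step 2 well defined. The inequality $|a_{n\eps}|\le L_{\eps}^{n}$ holds only for $\eps\le\eps_{0}$ and depends on representatives. I therefore need the hyperfinite sums $\sum|a_{n}|$ to be independent of representatives in $\RCrealud{\rho}{}{}_{\mathrm{s}}$. I would secure this by working with the fixed representative $(a_{n\eps})$ for which the hypothesis holds and checking the inequality on $[\,\cdot\,]$-classes of partial sums. Negligible perturbations of $L_{\eps}$ do not matter, because $1^{m}=1$ by Remark~\ref{rem:hypNr}, so $(L_{\eps}+z_{\eps})^{n_{\eps}}$ and $L_{\eps}^{n_{\eps}}$ are equal in $\RCrealrho$.
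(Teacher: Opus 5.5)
Your proposal is correct and follows the paper's own route. The paper's proof just says to proceed as in the ratio test (Thm.~\ref{thm:ratiotest}): the bound $|a_{n}|\le L^{n}$ for $n\ge k$ gives $\sum_{n\ge k}|a_{n}|\le\sum_{n\ge k}L^{n}$, and then the geometric hyperseries, direct comparison and Thm.~\ref{thm:absoluteconverge} finish the argument. Your handling of the hyperfinite shift $k$ via Corollary~\ref{cor:tailofhyperseries}, and your check that the hyperfinite sums do not depend on the chosen representatives, spell out details the paper leaves implicit.
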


\begin{proof}
We can proceed as in the previous theorem considering that if $\left|a_{n}\right|\leq L^{n}$,
then $\sum_{n\in\hypNr_{\geq k}}\left|a_{n}\right|\leq\sum_{n\in\hypNr_{\geq k}}L^{n}$.
\end{proof}

\subsection{Definition and properties of hyper-power series}

Classically, a power series is ``a series of the form $\sum_{n=0}^{+\infty}a_{n}(z-c)^{n}$''
(here, it is clearly meant that $a_{n}$, $z$, $c\in\CC$). If we
do not consider its convergence, we talk of \emph{formal power series}
(then $z$ is some kind of \emph{indeterminate}, not a numerical variable).
Otherwise, we need to take $z$ in a suitable \emph{set of convergence},
and hence the notion of \emph{radius of convergence} is a natural
preliminary step.

In the ring $\RCcomplexrho$, we proceed similarly, but necessarily
being somehow more formal. Firstly, we always have to recall that
our definitions must be independent from representatives. Secondly,
in our setting we can have that the radius of converge $r=+\infty$,
but the values of the hyperseries must be anyway given by moderate
nets. This implies that a given hyperseries (e.g.~the exponential
one) cannot have $B_{r}(0)=\RCcomplexrho$ as \emph{set of convergence}.
\begin{defn}
\label{def:formalHPS}Let $z$, $c\in\RCcomplexrho$. We say $(b_{n})_{n}\in\RCcomplexud{\rho}{}\left\llbracket z-c\right\rrbracket $
if and only if there exist $(a_{n\eps})_{n,\eps}\in\R^{\N\times I}$
and representatives $[z_{\eps}]=z$, $[c_{\eps}]=c$ such that 
\begin{equation}
(b_{n})_{n}=[a_{n\eps}\cdot(z_{\eps}-c_{\eps})^{n}]_{{\rm s}}\in\RCcomplexud{\rho}{}_{{\rm s}}.\label{eq:formalHPS}
\end{equation}
Elements of $\RCcomplexud{\rho}{}\left\llbracket z-c\right\rrbracket $
are called \textit{formal hyperpower series (formal HPS)} because
we are not considering their convergence. In other words, a formal
HPS is a hyperseries (i.e.~an equivalence class $(b_{n})_{n}\in\RCcomplexud{\rho}{}_{{\rm s}}$
in the space of sequences of hyperseries, see Def.~\ref{def:hyperseries},
where we can always consider hyperfinite sums) of the form $\left[a_{n\eps}\cdot(z_{\eps}-c_{\eps})^{n}\right]_{{\rm s}}$.
\end{defn}

\medskip{}

\begin{rem}
~
\begin{enumerate}
\item We explicitly note that $z-c$ is not an indeterminate, like in the
case of formal power series $\CC[z]$, but a generalized number of
$\RCcomplexrho$. For example, if $z-c=y-d$ then $\RCcomplexud{\rho}{}\left\llbracket z-c\right\rrbracket =\RCcomplexud{\rho}{}\left\llbracket y-d\right\rrbracket $.
\item In Example \ref{exa:hypseries}\ref{enu:expHyperseries}, we already
proved that if $z$ satisfies $|z|\leq-K\log\diff\rho$, for some
$K\in\R$, then $\left(\frac{z^{n}}{n!}\right)_{n}=\left[\frac{z_{\eps}^{n}}{n!}\right]_{{\rm s}}\in\RCcomplexud{\rho}{}\left\llbracket z\right\rrbracket $
is a formal HPS. This clearly shows that we can consider any hyperfinite
sum $\sum_{n=N}^{M}\frac{z^{n}}{n!}\in\RCcomplexrho$, $N$, $M\in\hypNr$,
only if $z\in\RCcomplexrho$ is of logarithmic type, so that we can
apply Def.~\ref{def:hyperseries} of hyperseries.
\item Actually, the term $[a_{n\eps}\cdot(z_{\eps}-c_{\eps})^{n}]_{{\rm s}}$
in \eqref{eq:formalHPS} does not depend on $n\in\N$. Similarly,
the notation $(b_{n})_{n}$ does not depend on $n$ and has only the
main aim to recall the sequence of coefficients of the formal HPS.
We have a formally similar situation in the notation $\int f(x)\,\diff x$
that does not depend on $x$.
\end{enumerate}
\end{rem}

As we mentioned above, the previous Def.~\ref{def:formalHPS} sets
immediate problems concerning independence of representatives. Taking
different representatives $[\overline{a}_{n\eps}]=[a_{n\eps}]$, $[\overline{z}_{\eps}]=z$,
$[\overline{c}_{\eps}]=c$, do we have $[a_{n\eps}\cdot(z_{\eps}-c_{\eps})^{n}]_{{\rm s}}=[\overline{a}_{n\eps}\cdot(\overline{z}_{\eps}-c_{\eps})^{n}]_{{\rm s}}$?
Let $a_{n}=[a_{n\eps}]$ be such that $a_{n\eps}:=0$ if $\eps<\frac{1}{n+1}$
and $a_{n\eps}:=1$ otherwise, then $[a_{n\eps}]=0$ but the corresponding
series is not zero:
\[
\forall N=[N_{\eps}]\in\hypNr:\sum_{n=0}^{N_{\eps}}a_{n\eps}=\sum_{n=\left\lceil \frac{1}{\eps}\right\rceil -1}^{N_{\eps}}1=N_{\eps}-\left\lceil \frac{1}{\eps}\right\rceil +2.
\]
This means that $(b_{n})_{n}:=\left[a_{n\eps}(z_{\eps}-c_{\eps})^{n}\right]_{{\rm s}}$
and $(\bar{b}_{n})_{n}:=\left[\bar{a}_{n\eps}(z_{\eps}-c_{\eps})^{n}\right]_{{\rm s}}$,
for $z=1$, $c=0$, yield two different formal HPS and hence, in general
the operation 
\[
((a_{n\eps})_{n,\eps},(z_{\eps}),(c_{\eps}))\in\RCcomplexrho^{\N}\times\RCcomplexrho^{2}\mapsto(b_{n})_{n}:=\left[a_{n\eps}(z_{\eps}-c_{\eps})^{n}\right]_{s}\in\RCcomplexud{\rho}{}_{\mathrm{s}}
\]
is not well-defined. The following notion of negligibility (see Def.~\ref{def:radiusofconvergence}
below) naturally emerges in proving this independence of representative
and in showing that the following definition of radius of convergence
is well-defined (see Lem. \ref{lem:radconvergence}).

The idea to define the radius of convergence corresponding to coefficients
$(a_{n\eps})_{n,\eps}\in\R^{\N\times I}$ is that it does not matter
if 
\[
\left(\limsup_{n\rightarrow+\infty}|a_{n\eps}|^{\nicefrac{1}{n}}\right)^{-1}\in\R\cup\{+\infty\}
\]
yields a non $\rho$-moderate net (for example for $\eps\in L\subseteq_{0}I$)
because this case would intuitively identify a radius of convergence
larger than any number in $\RCrealrho$:
\begin{defn}
\label{def:radiusofconvergence}~
\begin{enumerate}
\item Let $\bar{\R}:=\R\cup\{-\infty,+\infty\}$ be the extended real number
system with the usual (partially defined) operations. We set $\RCrealinfty{\rho}:=\bar{\R}^{I}/\sim_{\rho}$,
where for \emph{arbitrary} $(x_{\eps})$, $(y_{\eps})\in\bar{\R}^{I}$,
as usual, we define
\[
(x_{\eps})\sim_{\rho}(y_{\eps})\quad:\Longleftrightarrow\quad\forall q\in\N\,\forall^{0}\eps:\ |x_{\eps}-y_{\eps}|\leq\rho_{\eps}^{q}.
\]
In $\RCrealinfty{\rho}$, we can also consider the standard order
relation
\[
x\le y\quad:\Longleftrightarrow\quad\exists[x_{\eps}]=x,\:[y_{\eps}]=y\:\forall^{0}\eps:\ x_{\eps}\leq y_{\eps}.
\]
Note that $\left(\RCrealinfty{\rho}\setminus\{-\infty\},+,\leq\right)$
is an ordered group but, since we are considering arbitrary nets $\bar{\R}^{I}$,
the set $\RCrealinfty{\rho}$ is not a ring: e.g.~$+\infty\cdot0$
is still undefined and $+\infty\cdot[z_{\eps}]=[+\infty]$ for all
$(z_{\eps})\in\R_{>0}^{I}$.
\item Moreover, we denote by $\RCcomplexrho_{{\rm c}}:=\left(\CC^{\N\times I}\right)_{\rho}/\simeq_{\rho}$
the quotient ring of \emph{coefficients for HPS}, where 
\begin{equation}
(a_{n\eps})_{n,\eps}\in\left(\CC^{\N\times I}\right)_{\rho}\quad:\Longleftrightarrow\quad\exists Q,R\in\N\,\forall^{0}\eps\,\forall n\in\N:\ |a_{n\eps}|\leq\rho_{\eps}^{-nQ-R}\label{eq:coeffmoderate}
\end{equation}
is the ring of \emph{weakly }$\rho$\emph{-moderate} nets, and
\begin{equation}
(a_{n\eps})_{n,\eps}\simeq_{\rho}(\bar{a}_{n,\eps})_{n,\eps}\quad:\Longleftrightarrow\quad\forall q,r\in\N\,\forall^{0}\eps\,\forall n\in\N:\ |a_{n\eps}-\bar{a}_{n\eps}|\leq\rho_{\eps}^{nq+r},\label{eq:coeffequiv}
\end{equation}
in this case, we say that these two nets are \emph{strongly }$\rho$\emph{-equivalent}.
Equivalence classes of $\RCcomplexrho_{{\rm c}}$ are denoted by $(a_{n})_{{\rm c}}:=[a_{n\eps}]_{{\rm c}}\in\RCcomplexrho_{{\rm c}}$.
\item Finally, if $(a_{n})_{{\rm c}}=[a_{n\eps}]_{{\rm c}}\in\RCcomplexrho_{{\rm c}}$,
then we set $\text{rad}(a_{n})_{n\eps}^{{\rm c}}:=r_{\eps}$ and $\text{rad}(a_{n})_{{\rm c}}=:[r_{\eps}]\in\RCrealinfty{\rho}$,
where
\begin{equation}
r_{\eps}:=\left(\limsup_{n\rightarrow+\infty}|a_{n\eps}|^{\nicefrac{1}{n}}\right)^{-1}\in\R\cup\{+\infty\}.\label{eq:radeps}
\end{equation}
\end{enumerate}
\end{defn}

In the following lemma, we prove that $\text{rad}(a_{n})_{{\rm c}}$
is well-defined:
\begin{lem}
\label{lem:radconvergence}Let $(a_{n})_{{\rm c}}=[a_{n\eps}]_{{\rm c}}=[\bar{a}_{n\eps}]_{{\rm c}}\in\RCcomplexrho_{{\rm c}}$.
Define $r_{\eps}$ as in \eqref{eq:radeps} and similarly define $\bar{r}_{\eps}$
using $\bar{a}_{n\eps}$. Then $(r_{\eps})\sim_{\rho}(\bar{r}_{\eps})$,
and hence $[r_{\eps}]=[\bar{r}_{\eps}]$ in $\RCrealinfty{\rho}$.
\end{lem}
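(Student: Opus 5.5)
The plan is to compare the two $\limsup$'s first and only then pass to reciprocals. Set $L_{\eps}:=\limsup_{n\to+\infty}|a_{n\eps}|^{1/n}$ and $\bar L_{\eps}:=\limsup_{n\to+\infty}|\bar a_{n\eps}|^{1/n}$, so that $r_{\eps}=L_{\eps}^{-1}$ and $\bar r_{\eps}=\bar L_{\eps}^{-1}$. By the weak moderateness \eqref{eq:coeffmoderate} we have $|a_{n\eps}|\le\rho_{\eps}^{-nQ-R}$, hence $L_{\eps}\le\rho_{\eps}^{-Q}<+\infty$ for $\eps$ small, and the same holds for $\bar L_{\eps}$. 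So the only infinite values to worry about are $r_{\eps}=+\infty$, i.e.~$L_{\eps}=0$.

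\emph{Step 1: the $\limsup$'s differ by a negligible net.} Fix $q\in\N$ and apply \eqref{eq:coeffequiv} with this $q$ and $r=0$. For $\eps$ small and all $n\ge1$ this gives $|a_{n\eps}|\le|\bar a_{n\eps}|+\rho_{\eps}^{nq}$. Since $t\mapsto t^{1/n}$ is subadditive on $\R_{\ge0}$, we get
\[
|a_{n\eps}|^{1/n}\le|\bar a_{n\eps}|^{1/n}+\rho_{\eps}^{q}
\]
uniformly in $n$. Taking $\limsup_{n}$ yields $L_{\eps}\le\bar L_{\eps}+\rho_{\eps}^{q}$, and by symmetry $|L_{\eps}-\bar L_{\eps}|\le\rho_{\eps}^{q}$ for $\eps$ small. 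Hence $(L_{\eps})\sim_{\rho}(\bar L_{\eps})$. The uniformity in $n$ of the strong equivalence is exactly what makes this step work.

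\emph{Step 2: pass to reciprocals.} Fix $q$. We have
\[
|r_{\eps}-\bar r_{\eps}|=\frac{|L_{\eps}-\bar L_{\eps}|}{L_{\eps}\bar L_{\eps}}.
\]
Choose a threshold $\rho_{\eps}^{P}$ and split $I$ into two sets. On the set of $\eps$ where $\min(L_{\eps},\bar L_{\eps})\ge\rho_{\eps}^{P}$, Step~1 applied with $q+2P$ in place of $q$ gives $|r_{\eps}-\bar r_{\eps}|\le\rho_{\eps}^{q}$. On the complementary set, one of $L_{\eps},\bar L_{\eps}$ is below $\rho_{\eps}^{P}$. By Step~1 the other is then below $2\rho_{\eps}^{P}$, so both $r_{\eps}$ and $\bar r_{\eps}$ are at least $\tfrac12\rho_{\eps}^{-P}$. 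If $L_{\eps}=\bar L_{\eps}=0$, both radii are $+\infty$ and nothing needs to be shown.

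\emph{Main obstacle.} The difficulty is the remaining part of the complementary set, where the radii are huge but possibly distinct, or where one is finite and the other is $+\infty$. A concrete example is $a_{n\eps}=\rho_{\eps}^{n/\eps}\simeq_{\rho}0$: here $r_{\eps}=\rho_{\eps}^{-1/\eps}$, while the zero representative gives $+\infty$. So the literal estimate $|r_{\eps}-\bar r_{\eps}|\le\rho_{\eps}^{q}$ cannot hold on this set. I would handle it by exploiting the intended reading stated just before Def.~\ref{def:radiusofconvergence}: on such $\eps$ both radii are larger than $\rho_{\eps}^{-P}$ for every $P$, i.e.~non-moderate, and they are identified in $\RCrealinfty{\rho}$ with $+\infty$. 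Concretely, I would let $P$ grow with $q$ and use a diagonal choice of thresholds to show that, for each $q$, the bad set contributes only values where both nets exceed every power $\rho_{\eps}^{-P}$. This should reduce the claim to the equality of the two classes in $\RCrealinfty{\rho}$ under that convention. Making this identification precise, without changing the definition of $\sim_{\rho}$ on $\bar{\R}^{I}$, is the step I expect to require the most care.
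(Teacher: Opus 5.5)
Your Step~1 is exactly the paper's entire proof. The paper uses the subadditivity of $t\mapsto t^{1/n}$ and \eqref{eq:coeffequiv} with $r=0$ to get $|L_\eps-\bar L_\eps|\le\rho_\eps^{q}$ for $\eps$ small. It then simply writes ``which proves the claim'' and never passes to the reciprocals $r_\eps=L_\eps^{-1}$. You correctly locate the crux there. However, your Step~2 cannot be completed by any choice of thresholds, because with $\sim_\rho$ on $\bar{\R}^I$ taken literally as in Def.~\ref{def:radiusofconvergence}, the statement is false.

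Take $a_{n\eps}:=\rho_\eps^{(n+1)/\eps}$ and $\bar a_{n\eps}:=2^{n}\rho_\eps^{(n+1)/\eps}$. For $1/\eps\ge\max(q+1,r)$ and $\rho_\eps\le\frac12$ we have, for all $n\in\N$, $|a_{n\eps}-\bar a_{n\eps}|\le2^{n}\rho_\eps^{(n+1)/\eps}\le(2\rho_\eps)^{n}\rho_\eps^{nq+r}\le\rho_\eps^{nq+r}$. The same bound holds for each net alone, so $[a_{n\eps}]_{\rm c}=[\bar a_{n\eps}]_{\rm c}=0$. Yet $r_\eps=\rho_\eps^{-1/\eps}$ and $\bar r_\eps=\frac12\rho_\eps^{-1/\eps}$ differ by a non-moderate net; comparing instead with the zero net gives $\bar r_\eps=+\infty$.

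Your own example needs repairing. The net $\rho_\eps^{n/\eps}$ has $a_{0\eps}=1$, so it is not $\simeq_\rho0$. Either shift the exponent to $n+1$ as above (this is the paper's own example in the remark after the lemma), or compare it with the net $(1,0,0,\dots)$. Note also that ``exceeding every power $\rho_\eps^{-P}$'' is a property of a net on a subpoint, not of an individual $\eps$. So your diagonal argument has no $\eps$-wise bad set to act on. The gap is therefore genuine, but it lies in the statement itself (and is shared by the paper's proof), not in your technique.

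What is true is precisely what Step~1 proves, namely $(r_\eps^{-1})\sim_\rho(\bar r_\eps^{-1})$. Your good-set estimate adds a refinement the paper does not state. Suppose $r_\eps\le\rho_\eps^{-P}$ for small $\eps$ in some $K\subseteq_0 I$. Then Step~1 gives $\bar L_\eps\ge\frac12\rho_\eps^{P}$ there, and Step~1 applied with $q+2P$ gives $|r_\eps-\bar r_\eps|\le2\rho_\eps^{q}$. Hence the two radii are $\rho$-equivalent on every subpoint where one of them is $\rho$-moderate. They can disagree only where both are non-moderate, which is the ``intuitively $+\infty$'' situation the paper has in mind.

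Making that intuition rigorous is not a matter of a finer estimate. It requires changing the statement, or the equality in $\RCrealinfty{\rho}$ used for radii. For example, declare $[r_\eps]=[\bar r_\eps]$ iff $(r_\eps^{-1})\sim_\rho(\bar r_\eps^{-1})$ with $(+\infty)^{-1}:=0$; equivalently, define ${\rm rad}(a_n)_{\rm c}$ through the class of $(L_\eps)$. Under that convention your Step~1 alone is a complete proof. Since radii of weakly moderate coefficients satisfy $r_\eps\ge\rho_\eps^{Q}$, and by the refinement above, this convention agrees with ordinary $\sim_\rho$ whenever one of the radii is moderate.
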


\begin{proof}
For all $\eps\in I$ and all $n\in\N_{>0}$, we have $|\bar{a}_{n\eps}|^{\nicefrac{1}{n}}\leq\left(\left|\bar{a}_{n\eps}-a_{n\eps}\right|+|a_{n\eps}|\right)^{\nicefrac{1}{n}}$.
The binomial formula yields $(x+y)\leq\left(x^{\nicefrac{1}{n}}+y^{\nicefrac{1}{n}}\right)^{n}$
for all $x$, $y\in\R_{\geq0}$, so that $|\bar{a}_{n\eps}|^{\nicefrac{1}{n}}\leq\left|\bar{a}_{n\eps}-a_{n\eps}\right|^{\nicefrac{1}{n}}+|a_{n\eps}|^{\nicefrac{1}{n}}$.
Setting $r=0$ in \eqref{eq:coeffequiv}, for all $q\in\N$ and for
$\eps$ small we have 
\[
\forall n\in\N:\left|a_{n\eps}-\bar{a}_{n\eps}\right|\leq\rho_{\eps}^{nq}.
\]
Therefore, for the same $\eps$ we get $|\bar{a}_{n\eps}|^{\nicefrac{1}{n}}\leq q_{\eps}^{q}+|a_{n\eps}|^{\nicefrac{1}{n}}$.
Taking the limit superior, we obtain $\limsup_{n\rightarrow+\infty}|\bar{a}_{n\eps}|^{\nicefrac{1}{n}}\leq q_{\eps}^{q}+\limsup_{n\rightarrow+\infty}|a_{n\eps}|^{\nicefrac{1}{n}}$.
Inverting the role of $(a_{n\eps})_{n,\eps}$ and $(\bar{a}_{n\eps})_{n,\eps}$
we finally obtain
\[
\forall^{0}\eps:-\rho_{\eps}^{q}\leq\limsup_{n\rightarrow+\infty}|a_{n\eps}|^{\nicefrac{1}{n}}-\limsup_{n\rightarrow+\infty}|\bar{a}_{n\eps}|^{\nicefrac{1}{n}}\leq\rho_{\eps}^{q},
\]
which proves the claim.
\end{proof}
\begin{rem}
~
\begin{enumerate}
\item If $(a_{n})_{{\rm c}}=[a_{n\eps}]_{{\rm c}}\in\RCcomplexrho_{{\rm c}}$,
then for each fixed $n\in\N$, we have that $[(a_{n\eps})_{\eps}]\in\RCcomplexrho$,
i.e. the net $(a_{n\eps})_{\eps}$ is $\rho$-moderate. This is the
main motivation to consider the exponent ``$-R$'' in \eqref{eq:coeffmoderate}
(recall that in our notation $0\in\N$): without the term ``$-R$'',
the only possibility to have $(a_{n})_{{\rm c}}\in\RCcomplexrho_{{\rm c}}$
is that $|a_{0}|\leq1$, which is an unnecessary limitation. Similarly,
we can motivate why we are considering the quantifier ``$\forall n\in\N$''
in the same formula (instead of, e.g., ``$\exists N\in\N\:\forall n\in\N_{\geq N}$'').
Moreover, Thm.~\ref{thm:independencerepresentatives} will motivate
why in \eqref{eq:coeffmoderate} we consider the uniform property
``$\forall^{0}\eps\,\forall n\in\N$'' and not ``$\forall n\in\N\:\forall^{0}\eps$''.
\item Condition \eqref{eq:coeffmoderate} of being weakly $\rho$-moderate
represents a constrain on what coefficients $a_{n}$ we can consider
in a hyperseries. For example, if $(a_{n})_{n\in\N}$ is a sequence
of complex numbers satisfying $|a_{n}|<p(n)$, where $p\in\R[x]$
is a polynomial, then $p(n)\leq\rho_{\eps}^{-nQ}$ for all $\eps$
sufficiently small and for all $n\in\N$ if $Q\geq\max\left(1,\max\left\{ -\frac{\log n}{p(n)\log\rho_{\eps}}:n<N_{1}\right\} \right)$,
where $\frac{\log n}{p(n)}\leq1$ for all $n\geq N_{1}$ and $-\frac{1}{\log\rho_{\eps}}\leq1$.
Hence $(a_{n})_{n,\eps}\in\left(\R^{\N\times I}\right)_{\rho}$ is
weakly $\rho$-moderate. On the contrary, we cannot have $n^{n}\leq\rho_{\eps}^{-nQ-R}=\rho_{\eps}^{-R}\left(\frac{1}{\rho_{\eps}^{Q}}\right)^{n}$
for all $n\in\N$. Similarly $(n!)_{n\in\N}$ is not weakly $\rho$-moderate
and hence our theory does not apply to a ``hyperseries'' of the
form $\hypersum{}{\rho}n!z^{n}$.
\item Let $a_{n\eps}=\rho_{\eps}^{\frac{n+1}{\eps}}$, so that $[a_{n\eps}]_{{\rm c}}=0$.
The corresponding radius of convergence is $r_{\eps}=\lim_{n\rightarrow+\infty}|a_{n\eps}|^{\nicefrac{1}{n}}=\rho_{\eps}^{\nicefrac{1}{\eps}}$
which is not $\rho$-moderate. In general, if $\text{rad}(a_{n})_{{\rm c}}=[r_{\eps}]=:r\in\RCrealinfty{\rho}$,
we can have different behavior on different subpoints, e.g. $r|_{L_{1}}=+\infty$,
$r|_{L_{2}}\in\RCrealrho$, $r|_{L_{3}}$ is non $\rho$-moderate,
etc., where $L_{i}\subseteq_{0}I$. This behavior is studied in Thm.
\ref{lem:radconvergence} below.
\item Let $(a_{n})_{{\rm c}}=[a_{n\eps}]_{{\rm c}}\in\RCcomplexrho_{{\rm c}}$
and assume that for all $\eps$ there exists $r_{\eps}:=\left(\lim_{n\rightarrow+\infty}|a_{n\eps}|^{\nicefrac{1}{n}}\right)^{-1}$
such that $r:=[r_{\eps}]\in\RCrealrho$. Then, we have $\hyperlim{}{\rho}|a_{n}|^{\nicefrac{1}{n}}=\frac{1}{r}$
and $r=\text{rad}(a_{n})_{{\rm c}}\in\RCrealrho$.
\end{enumerate}
\end{rem}

The following lemma represents a useful tool to deal with radius of
convergence. It essentially states that the radius of convergence
cannot be zero (but it can be an invertible infinitesimal), it is
finite or it equals $+\infty$ at least on some subpoint, and behaves
like a supremum (see also \cite{Mukhammadiev2021}).
\begin{thm}
\label{thm:radofconvergence}Let $(a_{n})_{{\rm c}}\in\RCcomplexrho_{{\rm c}}$
and $r=[r_{\eps}]=\emph{rad}(a_{n})_{{\rm c}}\in\RCrealinfty{\rho}$,
then we have
\begin{enumerate}
\item \label{enu:radpositive}$r>0$;
\item \label{enu:radinfinity}$r<+\infty$ or $r\sbpt{=}+\infty$;
\item \label{enu:densityofR}If $q\in\RCrealrho$ and $q<r$, then $\exists s\in\RCrealrho_{>0}:q<s\le r$.
\end{enumerate}
\end{thm}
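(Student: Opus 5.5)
Everything will be reduced to $\eps$-wise estimates on a fixed representative $(a_{n\eps})$ satisfying the weak moderateness condition \eqref{eq:coeffmoderate}. Independence of the representative is already given by Lem.~\ref{lem:radconvergence}.

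For \ref{enu:radpositive}, take $Q,R\in\N$ as in \eqref{eq:coeffmoderate}. Then for $\eps$ small and all $n\ge1$ we have $|a_{n\eps}|^{1/n}\le\rho_{\eps}^{-Q-R/n}$. Taking the $\limsup$ as $n\to+\infty$, the factor $\rho_{\eps}^{-R/n}\to1$, so $\limsup_{n}|a_{n\eps}|^{1/n}\le\rho_{\eps}^{-Q}$. Hence $r_{\eps}\ge\rho_{\eps}^{Q}$ for $\eps$ small, with $r_\eps=+\infty$ allowed. This gives $r\ge\diff\rho^{Q}$, so $r>0$ in $\RCrealinfty{\rho}$, in the sense that $r$ is bounded below by an invertible generalized number. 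I would spell this out by replacing $r_{\eps}$ with $\min(r_{\eps},1)$ if one wants a comparison in $\RCrealrho$.

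For \ref{enu:radinfinity}, I would argue by cases on the set $L:=\{\eps\in I:r_{\eps}=+\infty\}$. If $L$ accumulates at $0$, i.e.~$L\subseteq_{0}I$, then $r|_{L}=+\infty$, which is the statement $r\sbpt{=}+\infty$. Otherwise $r_{\eps}\in\R$ for $\eps$ small, so $r$ is represented by a real-valued net, which is what I take $r<+\infty$ to mean. A possibly non-moderate real net is still $<[+\infty]$ in the order of Def.~\ref{def:radiusofconvergence}. The only care needed is to check that this dichotomy matches the intended meaning of ``$<+\infty$'' in $\RCrealinfty{\rho}$ (strict inequality $\eps$-wise for small $\eps$).

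For \ref{enu:densityofR}, I expect the main obstacle to be the presence of subpoints where $r$ is $+\infty$ or non-moderate, so that $r$ itself cannot serve as $s$. Let $q=[q_{\eps}]<r$. The plan is to truncate: set $s_{\eps}:=\min\left(r_{\eps},\max(q_{\eps},0)+\rho_{\eps}^{-1}\right)$, or, if a strict gap is needed, take the middle point between $q$ and the truncation. First, $(s_{\eps})$ is $\rho$-moderate, since it is bounded above by $|q_{\eps}|+\rho_{\eps}^{-1}$ and below by the bound $\rho_{\eps}^{Q}$ from \ref{enu:radpositive}. Hence $s\in\RCrealrho_{>0}$, and $s\le r$ holds $\eps$-wise. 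To get $q<s$, use that $q<r$ means $r-q$ is bounded below by some $\diff\rho^{p}$. Then $s_{\eps}-q_{\eps}\ge\min(r_{\eps}-q_{\eps},\rho_{\eps}^{-1})\ge\rho_{\eps}^{p}$ for $\eps$ small, in the cases $q_{\eps}\ge0$ and $q_{\eps}<0$ alike (in the latter using $s_{\eps}>0$). The delicate point is handling $q$ that is not positive on all subpoints, which the $\max(q_\eps,0)$ takes care of. I would also double-check that the strict order $<$ in $\RCrealrho$ is exactly what is inherited when $r_{\eps}$ is infinite on a subpoint.
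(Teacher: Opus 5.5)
Your proposal is correct and follows the paper's proof essentially step by step: the same $\limsup$ estimate $r_{\eps}\ge\rho_{\eps}^{Q}$ for (i), the same case split on $L=\{\eps: r_{\eps}=+\infty\}$ for (ii), and for (iii) the same idea of truncating $r_{\eps}$ by an $\eps$-wise minimum with a moderate net that lies above $q$ by the invertible gap $r_{\eps}-q_{\eps}>\rho_{\eps}^{p}$. The only difference is your truncation level $\max(q_{\eps},0)+\rho_{\eps}^{-1}$ in place of the paper's $q_{\eps}+2\rho_{\eps}^{m}$, and yours is the safer choice: the paper's $s_{\eps}=\min(r_{\eps},q_{\eps}+2\rho_{\eps}^{m})$ fails to be positive when, e.g., $q=-1$, whereas your $s$ satisfies $s\ge\diff\rho^{Q}$ for every $q\in\RCrealrho$.
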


\begin{proof}
\ref{enu:radpositive}: Assume that $|a_{n\eps}|\leq\rho_{\eps}^{-nQ-R}$
for all $\eps\leq\eps_{0}$ and for all $n\in\N$. Then $\limsup_{n\rightarrow+\infty}|a_{n\eps}|^{\nicefrac{1}{n}}\leq\lim_{n\rightarrow+\infty}\rho_{\eps}^{-Q-\frac{R}{n}}=\rho_{\eps}^{-Q}$,
i.e. $r_{\eps}\geq\rho_{\eps}^{Q}$.

\noindent\ref{enu:radinfinity}: Set $L:=\left\{ \eps\in I:r_{\eps}=+\infty\right\} $.
If $L\subseteq_{0}I$, then $r=_{L}+\infty$. Otherwise $(0,\eps_{0}]\cap L=\emptyset$
for some $\eps_{0}$, i.e. $r_{\eps}<+\infty$ for all $\eps\leq\eps_{0}$.

\noindent\ref{enu:densityofR}: Assume that $r>q$. Take representative
$[r_{\eps}]=r$ and $[q_{\eps}]=q$. By \cite[Lem.~3]{NuGi24a}, we
can find $m\in\N$ such that $r_{\eps}>q_{\eps}+\rho_{\eps}^{m}$
for small $\eps$. For all $\eps$, take $s_{\eps}:=\min\left(r_{\eps},q_{\eps}+2\rho_{\eps}^{m}\right)$.
Then $s:[s_{\eps}]\in\RCrealrho_{>0}$ and $q<s\leq r$.
\end{proof}
Even if the radius of convergence of the exponential hyperseries is
$\text{rad}\left(\frac{1}{n!}\right)_{\mathrm{c}}=+\infty$, we have
that $e^{z}=\hypersum{}{\rho}\frac{z^{n}}{n!}\in\RCcomplexrho$ implies
$|z|\leq\log\left(\diff\rho^{-R}\right)$ for some $R\in\N$: in other
words, the constraint to get a $\rho$-moderate number implies that
even if $\hypersum{}{\rho}\frac{z^{n}}{n!}$ converges at $z$, the
exponential HPS does \textit{not} converge in the entire $B\left(z\right)_{r}=\RCcomplexrho$,
where $r:=\text{rad}\left(\frac{1}{n!}\right)_{\mathrm{c}}=+\infty$.

In several steps below, we will also see that it is also very useful
to have that the generalized number $\left[\sum_{n=N_{\eps}}^{M_{\eps}}a_{n\eps}(z_{\eps}-c_{\eps})^{n}\right]\in\RCcomplexrho$,
where $N_{\eps}\in\N$ and $M_{\eps}\in\N\cup\{+\infty\}$ are $\rho$-moderate
nets, is well-defined as a function of $z=[z_{\eps}]$.

Finally, in all our examples, if the HPS $\hypersum{}{\rho}a_{n}(z-c)^{n}\in\RCcomplexrho$
converges, then it converges exactly to $\left[\sum_{n=0}^{+\infty}a_{n\eps}(z_{\eps}-c_{\eps})^{n}\right]\in\RCcomplexrho$.
These observations motivate the following definition of \textit{set
of convergence}:
\begin{defn}
\label{def:setofconvergence} Let $(a_{n})_{{\rm c}}\in\RCcomplexrho_{{\rm c}}$
and $c\in\RCcomplexrho$. The set of convergence
\[
\setconv{\rho}{}\left((a_{n})_{{\rm c}},c\right)
\]
is the set of all $z\in\RCcomplexrho$ satisfying
\begin{enumerate}
\item \label{enu:radofconvergence}$|z-c|<\text{rad}(a_{n})_{{\rm c}}$,
\end{enumerate}
and such that there exist representatives $[z_{\eps}]=z$, $[a_{n\eps}]_{{\rm c}}=(a_{n})_{{\rm c}}$,
and $[c_{\eps}]=c$ satisfying the following conditions:
\begin{enumerate}[resume]
\item \label{enu:formalHPS}$\left[a_{n\eps}(z_{\eps}-c_{\eps})^{n}\right]_{{\rm s}}\in\RCcomplexud{\rho}{}\left\llbracket z-c\right\rrbracket $,
i.e. we have a formal HPS;
\item \label{enu:epsHPS}$\hypersum{}{\rho}a_{n}(z-c)^{n}=\left[\sum_{n=0}^{+\infty}a_{n\eps}(z_{\eps}-c_{\eps})^{n}\right]\in\RCcomplexrho$;
\item \label{enu:independence}For all representatives $[\hat{z}_{\eps}]=z$,
the derivative net is $\rho$-moderate:
\begin{equation}
\left(\frac{\text{d}}{\text{d}z}\left(\sum_{n=0}^{+\infty}a_{n\eps}(z-c_{\eps})^{n}\right)_{z=\hat{z}_{\eps}}\right)\in\CC_{\rho}.\label{eq:independentrepresentatives}
\end{equation}
\end{enumerate}
\end{defn}

Note that condition \ref{enu:formalHPS} is necessary because in \ref{enu:epsHPS}
we use a HPS; on the other hand, condition \ref{enu:epsHPS} and \ref{enu:independence}
state that we have a GHF defined by the net of holomorphic functions
$\left(\sum_{n=0}^{+\infty}a_{n\eps}(z-c_{\eps})^{n}\right)$. As
for GHF, see \cite[Thm. 26]{NuGi24a}, condition \ref{enu:independence}
only on first derivative suffices to prove that we actually have a
GHF in $z$.

We immediately note that $z\in\setconv{\rho}{}\left((a_{n})_{{\rm c}},c\right)$
if and only if $z-c\in\setconv{\rho}{}\left((a_{n})_{{\rm c}},0\right)$,
and because of this property without loss of generality we will frequently
assume $c=0$. We also note that condition \ref{enu:epsHPS} states
that the HPS $\hypersum{}{\rho}a_{n}z^{n}$ converges, and it does
exactly to the generalized number generated by $\sum_{n=0}^{+\infty}a_{n\eps}(z_{\eps}-c_{\eps})^{n}$.

As expected, for HPS the set of convergence is never trivial:
\begin{thm}
\label{thm:nonemptysetofconvergence}Let $(a_{n})_{c}\in\RCcomplexrho_{{\rm c}}$
and $c\in\RCcomplexrho$. Then 
\[
\exists q\in\N:B_{\diff\rho^{q}}(c)\subseteq\setconv{\rho}{}\left((a_{n})_{{\rm c}},c\right).
\]
\end{thm}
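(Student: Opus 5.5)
We use the weak moderateness of the coefficients to dominate the hyperpower series by a geometric one. Assume without loss of generality $c=0$, by the translation remark after Def.~\ref{def:setofconvergence}. Fix a representative $(a_{n\eps})$ and $Q,R\in\N$ with $|a_{n\eps}|\le\rho_{\eps}^{-nQ-R}$ for all $n$ and all $\eps\le\eps_0$, as in \eqref{eq:coeffmoderate}. Set $q:=Q+1$ and take $z\in B_{\diff\rho^{q}}(0)$. Then every representative $[z_\eps]=z$ satisfies $|z_\eps|<\rho_\eps^{Q+1}$ for $\eps$ small, hence
\[
|a_{n\eps}z_\eps^n|\le\rho_\eps^{-R}\rho_\eps^{n}\qquad\forall n\in\N .
\]

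Condition \ref{enu:radofconvergence} follows from Thm.~\ref{thm:radofconvergence}\ref{enu:radpositive}, which gives $r_\eps\ge\rho_\eps^{Q}>\rho_\eps^{Q+1}$, so $|z|<\diff\rho^{Q+1}<\diff\rho^{Q}\le r$. For \ref{enu:formalHPS}, every hyperfinite partial sum is bounded by $\rho_\eps^{-R}\sum_n\rho_\eps^n\le2\rho_\eps^{-R}$, so \eqref{eq:moderateseries} holds and we have a formal HPS.

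For \ref{enu:epsHPS}, the series $s_\eps:=\sum_{n=0}^{\infty}a_{n\eps}z_\eps^n$ converges absolutely, and $|s_\eps|\le2\rho_\eps^{-R}$, so $s:=[s_\eps]\in\RCcomplexrho$. For $N=[N_\eps]\in\hypNr$ the tail satisfies
\[
\Big|s_\eps-\sum_{n=0}^{N_\eps}a_{n\eps}z_\eps^n\Big|\le\rho_\eps^{-R}\frac{\rho_\eps^{N_\eps+1}}{1-\rho_\eps}.
\]
Given $p\in\N$, choose $M:=p+R+1\in\N$. For $N\ge M$ this is $<\diff\rho^{p}$, because $\rho_\eps^{N_\eps+1}\le\rho_\eps^{M}$ for $\eps$ small. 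Hence the hyperlimit of the partial sums equals $s$, exactly the geometric tail argument of Example~\ref{exa:hypseries}\ref{enu:geometryconverge}.

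For \ref{enu:independence}, for any other representative $[\hat z_\eps]=z$ we again have $|\hat z_\eps|<\rho_\eps^{Q+1}$ eventually. Differentiating termwise inside the $\eps$-wise disc of convergence gives
\[
\Big|\sum_{n\ge1}na_{n\eps}\hat z_\eps^{n-1}\Big|\le\rho_\eps^{-R-Q}\sum_{n\ge1}n\rho_\eps^{n-1}\le4\rho_\eps^{-R-Q},
\]
which is $\rho$-moderate. The main subtlety is that conditions \ref{enu:formalHPS}--\ref{enu:epsHPS} depend on representatives. This is handled by the fact that the bound on $|z_\eps|$ holds for all representatives, uniformly in $n$; that uniformity is where the form $\forall^0\eps\,\forall n$ of \eqref{eq:coeffmoderate} is essential.
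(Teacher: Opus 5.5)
Your proof is correct and follows the paper's own argument. You take $q=Q+1$; the paper takes $q=\max(1+Q,q_1)$ with $q_1$ from Thm.~\ref{thm:radofconvergence}\ref{enu:radpositive}, which is effectively the same choice. Both arguments then reduce every condition of Def.~\ref{def:setofconvergence} to the domination $|a_{n\eps}z_\eps^n|\le\rho_\eps^{-R}\rho_\eps^{n}$ and the resulting geometric bounds on partial sums, tails, and the derivative series. Your tail estimate keeps the factor $\rho_\eps^{-R}$ (which the paper drops) and chooses $M=p+R+1\in\N$ explicitly, so it is slightly more careful than the paper's.
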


\begin{proof}
From Thm. \ref{thm:radofconvergence}\ref{enu:radpositive}, we have
$r:=\text{rad}(a_{n})_{{\rm c}}\geq\diff\rho^{q_{1}}$ for some $q_{1}\in\N$.
We also have $|a_{n\eps}|\leq\rho_{\eps}^{-nQ-R}$ from \ref{eq:coeffmoderate}.
We have to find $q\in\N_{\geq q_{1}}$ so that $B_{\diff\rho^{q}}(c)\subseteq\setconv{\rho}{}\left((a_{n})_{{\rm c}},c\right)$.
To prove property Def. \ref{def:setofconvergence}\ref{enu:formalHPS},
for $N_{\eps}$, $M_{\eps}\in\N$ and for $\eps$ small, we estimate
\[
\left|\sum_{n=N_{\eps}}^{M_{\eps}}a_{n\eps}(z_{\eps}-c_{\eps})^{n}\right|\leq\sum_{n=N_{\eps}}^{M_{\eps}}\rho_{\eps}^{-nQ-R}\rho_{\eps}^{nq}=\rho_{\eps}^{-R}\sum_{n=N_{\eps}}^{M_{\eps}}\rho_{\eps}^{-nQ+nq}.
\]
Therefore, taking $q:=\max\left(1+Q,q_{1}\right)$, we get
\[
\left|\sum_{n=N_{\eps}}^{M_{\eps}}a_{n\eps}(z_{\eps}-c_{\eps})^{n}\right|\leq\rho_{\eps}^{-R}\sum_{n=N_{\eps}}^{M_{\eps}}\rho_{\eps}^{n}\leq\frac{\rho_{\eps}^{-R}}{1-\rho_{\eps}},
\]
and this proves Def.~\ref{def:setofconvergence}\ref{enu:formalHPS}.
Similarly, we have 
\begin{align*}
\left|\sum_{n=0}^{M_{\eps}}a_{n\eps}(z_{\eps}-c_{\eps})^{n}-\sum_{n=0}^{+\infty}a_{n\eps}(z_{\eps}-c_{\eps})^{n}\right| & \leq\sum_{n=M_{\eps}+1}^{+\infty}\rho_{\eps}^{n}\leq\frac{\rho_{\eps}^{M_{\eps}+1}}{1-\rho_{\eps}}.
\end{align*}
Since $\hyperlim{\rho}{}\diff\rho^{M+1}=0$, this proves Def. \ref{def:setofconvergence}\ref{enu:epsHPS}.
Finally, for all representatives $[\hat{z}_{\eps}]=z$, we have 
\begin{align*}
\frac{\text{d}}{\text{d}z}\left(\sum_{n=0}^{+\infty}a_{n\eps}(z-c_{\eps})^{n}\right)_{z=\hat{z}_{\eps}} & =\sum_{n=0}^{+\infty}na_{n\eps}(\hat{z}_{\eps}-c_{\eps})^{n-1}
\end{align*}
and hence 
\begin{align*}
\left|\frac{\text{d}}{\text{d}z}\left(\sum_{n=0}^{+\infty}a_{n\eps}(z-c_{\eps})^{n}\right)_{z=\hat{z}_{\eps}}\right| & \leq\sum_{n=0}^{+\infty}n\rho_{\eps}^{-nQ-R}\rho_{\eps}^{\left(n-1\right)q}\\
 & =\rho_{\eps}^{-R-q}\sum_{n=0}^{+\infty}n\rho_{\eps}^{(q-Q)n}\\
 & =\rho_{\eps}^{-R-q}\frac{\rho_{\eps}^{q-Q}}{(1-\rho_{\eps}^{q-Q})^{2}}.
\end{align*}
In the last step, we use $q\geq Q+1$ and the series $\sum_{n=0}^{+\infty}ny^{n}=\frac{y}{(1-y)^{2}}$
for $|y|<1$. This proves property \ref{enu:independence}.
\end{proof}
We can now prove independence from representatives both in Def. \ref{def:setofconvergence}
and in Def. \ref{def:formalHPS}:
\begin{thm}
\label{thm:independencerepresentatives}Let $(a_{n})_{{\rm c}}=[a_{n\eps}]_{{\rm c}}=[\hat{a}_{n\eps}]_{{\rm c}}\in\RCcomplexrho_{{\rm c}}$,
$z=[z_{\eps}]=[\hat{z}_{\eps}]$, $c=[c_{\eps}]=[\hat{c}_{\eps}]\in\RCcomplexrho$.
Assume that $z\in\setconv{\rho}{}\left((a_{n})_{{\rm c}},c\right)$.
Then,
\begin{enumerate}
\item \label{enu:independence1}The nets $(a_{n\eps})_{n,\eps}$, $(z_{\eps})$,
and $(c_{\eps})$ also satisfy all the conditions of Def.~\ref{def:setofconvergence}
of set of convergence.
\item \label{enu:independenceequal}$\left[a_{n\eps}(z_{\eps}-c_{\eps})^{n}\right]_{{\rm s}}=\left[\hat{a}_{n\eps}(\hat{z}_{\eps}-\hat{c}_{\eps})^{n}\right]_{{\rm s}}$,
where the equality is in $\RCcomplexrho_{{\rm s}}$ (see Def.~\ref{def:hyperseries}).
\end{enumerate}
\end{thm}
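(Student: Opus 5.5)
Since $z\in\setconv{\rho}{}\left((a_{n})_{{\rm c}},c\right)$, Def.~\ref{def:setofconvergence} gives some representatives, call them $(a_{n\eps})$, $(z_{\eps})$, $(c_{\eps})$ without loss of generality, satisfying \ref{enu:formalHPS}--\ref{enu:independence}. The plan is to compare any other choice $(\hat{a}_{n\eps})$, $(\hat{z}_{\eps})$, $(\hat{c}_{\eps})$ with this one, and to show that all differences are negligible uniformly in $n$ in the sense of \eqref{eq:negligibleseries}. Set $w_{\eps}:=z_{\eps}-c_{\eps}$ and $\hat{w}_{\eps}:=\hat{z}_{\eps}-\hat{c}_{\eps}$, so that $(w_{\eps}-\hat{w}_{\eps})\sim_{\rho}0$. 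Condition \ref{enu:radofconvergence} involves only classes, so it is automatic. By Thm.~\ref{thm:radofconvergence}\ref{enu:densityofR}, choose $s\in\RCrealrho_{>0}$ with $|w|<s\le r=\text{rad}(a_{n})_{\rm c}$. By \cite[Lem.~3]{NuGi24a} there is $m\in\N$ with $|w_{\eps}|,|\hat{w}_{\eps}|<s_{\eps}-\rho_{\eps}^{m}$ for $\eps$ small. Also fix $Q,R$ from \eqref{eq:coeffmoderate} valid for both $(a_{n\eps})$ and $(\hat a_{n\eps})$.

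The key estimate splits each term as
\[
a_{n\eps}w_{\eps}^{n}-\hat{a}_{n\eps}\hat{w}_{\eps}^{n}=(a_{n\eps}-\hat{a}_{n\eps})\hat{w}_{\eps}^{n}+a_{n\eps}(w_{\eps}^{n}-\hat{w}_{\eps}^{n}).
\]
For the first summand I use \eqref{eq:coeffequiv} with exponent $nq+r$. Since $|\hat{w}_{\eps}|\le\rho_{\eps}^{-P}$ for some $P$, choosing $q>P+1$ gives $|(a_{n\eps}-\hat{a}_{n\eps})\hat{w}_{\eps}^{n}|\le\rho_{\eps}^{r}\rho_{\eps}^{n}$. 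This is summable over all $n$ with total bound $\le 2\rho_{\eps}^{r}$, uniformly in $N_\eps,M_\eps$, for arbitrary $r$. This is exactly where the uniform quantifier ``$\forall^0\eps\,\forall n$'' is needed.

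The second summand is the main obstacle, because $|a_{n\eps}|$ may grow like $\rho_\eps^{-nQ}$, so the naive bound $|w^{n}-\hat w^{n}|\le n|w-\hat w|\max(|w|,|\hat w|)^{n-1}$ does not obviously sum. I would handle it in two ways. The first is to use the mean value inequality on the holomorphic function $f_{\eps,N,M}(u):=\sum_{n=N_\eps}^{M_\eps}a_{n\eps}u^{n}$ along the segment $[\hat w_\eps,w_\eps]$, with the bound $\sum_{n}n|a_{n\eps}|t_\eps^{n-1}$ for $t_\eps:=s_\eps-\rho_\eps^{m}$. The second, which is more robust, is a Cauchy estimate on the circle of radius $s_{\eps}-\rho_\eps^{m}/2<r_\eps$: if $\sum|a_{n\eps}|t_\eps^{n}$ is moderate, the $n$-sum of $n|a_{n\eps}|t_\eps^{n-1}$ is controlled by a moderate factor times $\rho_\eps^{-m}$. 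In either case, multiplying by $|w_\eps-\hat w_\eps|\le\rho_\eps^{k}$ for every $k$ yields negligibility. To obtain moderateness of $\sum|a_{n\eps}|t_\eps^{n}$, I would combine \ref{enu:epsHPS}/\ref{enu:independence}, which give moderateness of the sum and of its derivative at $w_\eps$ and hence on a small sharp ball, with the radius condition. If this fails in general, I would fall back on proving the estimate for $|w|\le$ some $\diff\rho^{q}$-type bound as in Thm.~\ref{thm:nonemptysetofconvergence}, and then extend it using \ref{enu:independence}.

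From this single estimate both claims follow. Claim \ref{enu:independenceequal} is immediate: $\sum_{n=N_\eps}^{M_\eps}$ of the difference is negligible for all $N,M\in\hypNr$. For \ref{enu:independence1}, condition \ref{enu:formalHPS} follows since moderateness over hypersums transfers under $\sim_\rho$. For \ref{enu:epsHPS}, taking $M_\eps=+\infty$ in the same estimate shows that $\left[\sum_{n=0}^{\infty}a_{n\eps}w_\eps^n\right]=\left[\sum_{n=0}^{\infty}\hat a_{n\eps}\hat w_\eps^n\right]$, and the hyperlimit is unchanged because the hyperseries classes coincide. For \ref{enu:independence}, the derivatives differ by $\sum n(a_{n\eps}-\hat a_{n\eps})(\cdot)^{n-1}$, which is negligible by the same $\rho^{nq+r}$ argument, plus the derivative of the original net evaluated at a shifted center $\hat c_\eps$. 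The latter is again handled by the mean value/Cauchy estimate applied to the second derivative.
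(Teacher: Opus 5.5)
\paragraph{The gap.} Both of your treatments of the second summand $a_{n\eps}(w_\eps^n-\hat w_\eps^n)$ rest on one step: $\rho$-moderateness of $\sum_n|a_{n\eps}|t_\eps^n$ for some $t_\eps>|w_\eps|$. The same applies to $\sum_n n|a_{n\eps}|t_\eps^{n-1}$ and to $\max_{|u|=t_\eps}|F_\eps(u)|$, where $F_\eps(u):=\sum_n a_{n\eps}u^n$. Nothing in Def.~\ref{def:setofconvergence} gives this, and it is false in general.

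\begin{itemize}
\item The radius is an $\eps$-wise $\limsup$, so it constrains $a_{n\eps}$ only beyond an index that may be non-moderate.
\item Conditions (ii) and (iii) only see hyperfinite ranges at the single point $w_\eps$.
\item Condition (iv) controls $F_\eps$ only on an infinitesimal ball around $w_\eps$, never on a circle centred at $c_\eps$.
\end{itemize}

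\emph{Counterexample.} Take $c=0$, $z=1$ and
\[
F_\eps(u):=u^{K_\eps}(1-u)^{K_\eps}=\sum_{n}a_{n\eps}u^{n},\qquad K_\eps:=\lceil e^{1/\rho_\eps}\rceil .
\]
\begin{itemize}
\item Here $a_{n\eps}\neq0$ only for $K_\eps\le n\le 2K_\eps$, and there $|a_{n\eps}|\le 2^{K_\eps}\le\rho_\eps^{-n}$. So the coefficients are weakly moderate and the radius is $+\infty$.
\item Every hyperfinite sum vanishes for $\eps$ small, because ${\rm ni}(M)_\eps<K_\eps$. So (ii) and (iii) hold, with $F_\eps(1)=0$.
\item From $F_\eps'(u)=K_\eps u^{K_\eps-1}(1-u)^{K_\eps-1}(1-2u)$ you get $|F_\eps'(u)|\le 3K_\eps(2\rho_\eps)^{K_\eps-1}$ for $|u-1|\le\rho_\eps$. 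So (iv) holds.
\end{itemize}
Thus $1\in\setconv{\rho}{}\left((a_n)_{\rm c},0\right)$. Yet for every $t\ge1$ both $\sum_n|a_{n\eps}|t^n\ge 2^{K_\eps}$ and $\max_{|u|=t}|F_\eps(u)|\ge|F_\eps(-t)|\ge 2^{K_\eps}$ are non-moderate.

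Consequences for your proposal:
\begin{itemize}
\item The case $M_\eps=+\infty$, which you need to transfer (iii), cannot be done your way.
\item Even on hyperfinite ranges, a $t$ fixed before $M$ fails in general. Take coefficients $(-1)^n/(n+1)$ up to a non-moderate degree, suitably corrected beyond it. Then $\sum_{n\le M_\eps}|a_{n\eps}|(1+\rho_\eps^{j})^n$ is non-moderate for $M_\eps=\lfloor\rho_\eps^{-j-1}\rfloor$, for every $j$.
\item Your fallback via a $\rho^q$-disc around $c$ has no mechanism to reach $w$.
\end{itemize}

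\paragraph{Comparison with the paper, and a repair.} You did locate the real crux, and your coefficient estimate is correct. The paper's proof uses the same split and the same coefficient estimate. It disposes of the point-shift term by a binomial expansion, factoring out $h_\eps$ and declaring the product negligible without bounding the remaining factor. So it does not supply the missing estimate either.

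What works is to use the hypotheses locally at $w_\eps$.

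\emph{Full series.} Since (iv) holds for all representatives of $z$, a diagonal argument gives $q_0,C$ with $|F_\eps'|\le\rho_\eps^{-C}$ on $\{|u-w_\eps|\le\rho_\eps^{q_0}\}$. (Otherwise, pick $u_\eps$ with $|u_\eps-w_\eps|\le\rho_\eps^{q}$ and $|F_\eps'(u_\eps)|>\rho_\eps^{-q}$ along some $\eps_q\to0$; this produces a representative of $z$ violating (iv).) The mean value inequality on $[w_\eps,\hat w_\eps]$ then makes $F_\eps(\hat w_\eps)-F_\eps(w_\eps)$ negligible. Together with your coefficient estimate, this transfers (iii). You already noted that (iv) controls $F'_\eps$ on a small ball around $w_\eps$; use that ball directly instead of converting it into a bound on a circle around $c_\eps$.

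\emph{Hyperfinite sums.} Get the bound from (ii), not from the radius. For each $M\in\hypNr$, choosing maximising indices $n_\eps\le M_\eps$ shows $|a_{n\eps}w_\eps^n|\le\rho_\eps^{-C}$ for all $n\le M_\eps$. Set $h_\eps:=\hat w_\eps-w_\eps$.
\begin{itemize}
\item Where $|w_\eps|\ge\rho_\eps^{Q+1}$, you have $\bigl(1+|h_\eps|/|w_\eps|\bigr)^{M_\eps}\le e$. Hence $\bigl|\sum_{n=N_\eps}^{M_\eps}a_{n\eps}(\hat w_\eps^n-w_\eps^n)\bigr|\le e\,M_\eps^2\rho_\eps^{-C}|h_\eps|/|w_\eps|$, which is negligible.
\item Where $|w_\eps|<\rho_\eps^{Q+1}$, weak moderateness $|a_{n\eps}|\le\rho_\eps^{-nQ-R}$ alone suffices, as in Thm.~\ref{thm:nonemptysetofconvergence}.
\end{itemize}

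\emph{Transfer of (iv).} No second derivative is needed. The net $\tilde z_\eps-\hat c_\eps+c_\eps$ is again a representative of $z$, so the original (iv) already covers $F_\eps'(\tilde z_\eps-\hat c_\eps)$. Only the coefficient change remains, which your $\rho_\eps^{nq+r}$ estimate handles.
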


\begin{proof}
\ref{enu:independence1}: Since we have similar steps for several
claims, let $N_{\eps}\in\N$ and $M_{\eps}\in\N\cup\{+\infty\}$,
so that a term of the form $\sum_{n=N_{\eps}}^{M_{\eps}}b_{n\eps}$
represents both the ordinary series $\sum_{n=0}^{+\infty}b_{n\eps}$
or the finite sum $\sum_{n=N_{\eps}}^{M_{\eps}}b_{n\eps}$. From Def.~\ref{def:setofconvergence}
of set of convergence, we get the existence of representatives $[\hat{z}_{\eps}]=z\in\RCcomplexrho$,
$[\hat{a}_{n\eps}]_{{\rm c}}=(a_{n})_{{\rm c}}$ and $[\hat{c}_{\eps}]=c$
satisfying Def. \ref{def:setofconvergence}. Set $y_{\eps}:=z_{\eps}-c_{\eps}$,
$y:=[y_{\eps}]$, $\hat{y}_{\eps}:=\hat{z}_{\eps}-\hat{c}_{\eps}$
and $\hat{y}:=[\hat{y}_{\eps}]$. Let $r:=[r_{\eps}]=\text{rad}(a_{n})_{{\rm c}}$
be the radius of convergence. Take $s\in\RCrealrho$ satisfying $|\hat{y}|<s\leq r$
and a representative $[s_{\eps}]=s$ such that $|\hat{y}_{\eps}|<s_{\eps}\leq r_{\eps}$
for all $\eps$ small. Set $b_{n\eps}:=a_{n\eps}-\hat{a}_{n\eps}$
and $h_{\eps}:=y_{\eps}-\hat{y}_{\eps}$. We have
\begin{align}
\sum_{n=N_{\eps}}^{M_{\eps}}a_{n\eps}y_{\eps}^{n} & =\sum_{n=N_{\eps}}^{M_{\eps}}\left(\hat{a}_{n\eps}+b_{n\eps}\right)\left(\hat{y}_{\eps}+h_{\eps}\right)^{n}\nonumber \\
 & =\sum_{n=N_{\eps}}^{M_{\eps}}\hat{a}_{n\eps}\left(\hat{y}_{\eps}+h_{\eps}\right)^{n}+\sum_{n=N_{\eps}}^{M_{\eps}}b_{n\eps}\left(\hat{y}_{\eps}+h_{\eps}\right)^{n}\label{eq:negligibledifference-1}
\end{align}
and 
\begin{align}
\sum_{n=N_{\eps}}^{M_{\eps}}na_{n\eps}y_{\eps}^{n-1} & =\sum_{n=N_{\eps}}^{M_{\eps}}n\left(\hat{a}_{n\eps}+b_{n\eps}\right)\left(\hat{y}_{\eps}+h_{\eps}\right)^{n-1}\nonumber \\
 & =\sum_{n=N_{\eps}}^{M_{\eps}}n\hat{a}_{n\eps}\left(\hat{y}_{\eps}+h_{\eps}\right)^{n-1}+\sum_{n=N_{\eps}}^{M_{\eps}}nb_{n\eps}\left(\hat{y}_{\eps}+h_{\eps}\right)^{n-1}.\label{eq:negligibledifference}
\end{align}
Since $h:=[h_{\eps}]=0$, we also have $|\hat{y}_{\eps}|+|h_{\eps}|<s_{\eps}\leq r_{\eps}$
for all $\eps$ small. For the same $\eps$, assume that $|b_{n\eps}|\leq\rho_{\eps}^{np+q}$
for fixed arbitrary $p$, $q\in\N$. We first consider the second
summand of \eqref{eq:negligibledifference-1} and \eqref{eq:negligibledifference}:
\begin{align*}
\left|\sum_{n=N_{\eps}}^{M_{\eps}}b_{n\eps}\left(\hat{y}_{\eps}+h_{\eps}\right)^{n}\right| & \leq\sum_{n=N_{\eps}}^{M_{\eps}}\rho_{\eps}^{np+q}s_{\eps}^{n}\leq\rho_{\eps}^{q}\sum_{n=N_{\eps}}^{M_{\eps}}\left(\rho_{\eps}^{p}s_{\eps}\right)^{n}\text{ and}\\
\left|\sum_{n=N_{\eps}}^{M_{\eps}}nb_{n\eps}\left(\hat{y}_{\eps}+h_{\eps}\right)^{n-1}\right| & \leq\sum_{n=N_{\eps}}^{M_{\eps}}n\rho_{\eps}^{np+q}s_{\eps}^{n-1}\leq\rho_{\eps}^{q}\sum_{n=N_{\eps}}^{M_{\eps}}n\left(\rho_{\eps}^{p}s_{\eps}\right)^{n-1}.
\end{align*}
Since $s\in\RCrealrho$, we can take $p\in\N$ sufficiently large
so that $\rho_{\eps}^{p}s_{\eps}<1$. This implies 
\begin{align*}
\left|\sum_{n=N_{\eps}}^{M_{\eps}}b_{n\eps}\left(\hat{y}_{\eps}+h_{\eps}\right)^{n}\right| & \leq\frac{\rho_{\eps}^{q}}{\left(1-\rho_{\eps}^{p}s_{\eps}\right)}\\
\text{and }\left|\sum_{n=N_{\eps}}^{M_{\eps}}nb_{n\eps}\left(\hat{y}_{\eps}+h_{\eps}\right)^{n-1}\right| & \leq\frac{\rho_{\eps}^{q}}{\left(1-\rho_{\eps}^{p}s_{\eps}\right)^{2}}.
\end{align*}
Thereby, for $q\rightarrow+\infty$, this summand defines a negligible
net. For the first summand of \eqref{eq:negligibledifference}, we
have
\begin{align*}
\left|\sum_{n=N_{\eps}}^{M_{\eps}}\hat{a}_{n\eps}\left(\hat{y}_{\eps}+h_{\eps}\right)^{n}-\sum_{n=N_{\eps}}^{M_{\eps}}\hat{a}_{n\eps}\hat{y}_{\eps}^{n}\right| & \qquad\qquad\qquad\qquad\qquad
\end{align*}
\begin{align}
\qquad\qquad & =\left|\sum_{n=N_{\eps}}^{M_{\eps}}\hat{a}_{n\eps}\left(\sum_{k=0}^{n}\binom{n-1}{k}\hat{y}_{\eps}^{n-k}h_{\eps}^{k}\right)-\sum_{n=N_{\eps}}^{M_{\eps}}\hat{a}_{n\eps}\hat{y}_{\eps}^{n}\right|\nonumber \\
 & =\left|\sum_{n=N_{\eps}}^{M_{\eps}}\hat{a}_{n\eps}\left(\sum_{k=1}^{n-1}\binom{n-1}{k}\hat{y}_{\eps}^{n-k}h_{\eps}^{k}\right)\right|\nonumber \\
 & =\left|h_{\eps}\right|\left|\sum_{n=N_{\eps}}^{M_{\eps}}\hat{a}_{n\eps}\left(\sum_{k=1}^{n-1}\binom{n-1}{k}\hat{y}_{\eps}^{n-k}h_{\eps}^{k-1}\right)\right|\label{eq:negligibledifference2-1}
\end{align}
and

\begin{align*}
\left|\sum_{n=N_{\eps}}^{M_{\eps}}n\hat{a}_{n\eps}\left(\hat{y}_{\eps}+h_{\eps}\right)^{n-1}-\sum_{n=N_{\eps}}^{M_{\eps}}n\hat{a}_{n\eps}\hat{y}_{\eps}^{n-1}\right| & \qquad\qquad\qquad\qquad\qquad
\end{align*}
\begin{align}
\qquad\qquad & =\left|\sum_{n=N_{\eps}}^{M_{\eps}}n\hat{a}_{n\eps}\left(\sum_{k=0}^{n-1}\binom{n-1}{k}\hat{y}_{\eps}^{(n-1)-k}h_{\eps}^{k}\right)-\sum_{n=N_{\eps}}^{M_{\eps}}n\hat{a}_{n\eps}\hat{y}_{\eps}^{n-1}\right|\nonumber \\
 & =\left|\sum_{n=N_{\eps}}^{M_{\eps}}n\hat{a}_{n\eps}\left(\sum_{k=1}^{n-1}\binom{n-1}{k}\hat{y}_{\eps}^{(n-1)-k}h_{\eps}^{k}\right)\right|\nonumber \\
 & =\left|h_{\eps}\right|\left|\sum_{n=N_{\eps}}^{M_{\eps}}n\hat{a}_{n\eps}\left(\sum_{k=1}^{n-1}\binom{n-1}{k}\hat{y}_{\eps}^{(n-1)-k}h_{\eps}^{k-1}\right)\right|.\label{eq:negligibledifference2}
\end{align}
Thereby, the right hand side of \eqref{eq:negligibledifference2-1}
and \eqref{eq:negligibledifference2} is negligible because $h:=[h_{\eps}]=0$.
We can hence state that 
\begin{equation}
\left(\sum_{n=N_{\eps}}^{M_{\eps}}a_{n\eps}y_{\eps}^{n}\right)\sim_{\rho}\left(\sum_{n=N_{\eps}}^{M_{\eps}}\hat{a}_{n\eps}\left(\hat{y}_{\eps}+h_{\eps}\right)^{n}\right)\text{ and }\label{eq:moderateness}
\end{equation}
\[
\left(\sum_{n=N_{\eps}}^{M_{\eps}}na_{n\eps}y_{\eps}^{n-1}\right)\sim_{\rho}\left(\sum_{n=N_{\eps}}^{M_{\eps}}n\hat{a}_{n\eps}\left(\hat{y}_{\eps}+h_{\eps}\right)^{n-1}\right).
\]
These prove that $[a_{n\eps}\cdot y_{\eps}^{n}]_{s}\in\RCcomplexrho_{{\rm s}}$,
the moderateness of $\left(\sum_{n=0}^{+\infty}a_{n\eps}y_{\eps}^{n}\right)$,
and Def. \ref{def:setofconvergence}\ref{enu:independence}. Hence,
we also proved claim \ref{enu:independenceequal}. Moreover, from
\eqref{eq:moderateness}, we have $\hypersum{}{\rho}[\hat{a}_{n\eps}]\cdot[\hat{y}_{\eps}]^{n}$
converges to $\left[\sum_{n=0}^{+\infty}\hat{a}_{n\eps}\hat{y}_{\eps}^{n}\right]=\left[\sum_{n=0}^{+\infty}a_{n\eps}y_{\eps}^{n}\right]\in\RCcomplexrho$.
\end{proof}
In the following examples, we start studying geometric hyperseries,
which are convergent HPS:
\begin{example}[Geometric HPS]
\label{exa:geometricHPS}Assume that $z\in\RCcomplexrho$ and $|z|<1$.
We have
\[
\left[\left|\sum_{n=0}^{N_{\eps}}z_{\eps}^{n}\right|\right]\leq\left[\left|\frac{1-\left|z_{\eps}^{N_{\eps}+1}\right|}{1-z_{\eps}}\right|\right]\leq\frac{2}{1-z}\in\RCcomplexrho.
\]
This shows that $(z^{n})_{n}=[z_{\eps}^{n}]\in\RCcomplexrho_{{\rm s}}$.
Moreover, from Ex. \ref{exa:hypseries}\ref{enu:geometryconverge},
we have 
\[
\hypersum{}{\rho}z^{n}=\frac{1}{1-z}.
\]
Since coefficients $a_{n\eps}=1$, we have $[a_{n\eps}]_{{\rm c}}\in\RCcomplexrho_{{\rm c}}$,
$\text{rad}(a_{n})_{{\rm c}}=1$, and $\setconv{\rho}{}\left((1)_{{\rm c}},0\right)\subseteq B_{1}(0)$.
Now, take $z=[z_{\eps}]\in B_{1}(0)$, with $|z_{\eps}|<1$ for all
$\eps$, we have 
\[
\hypersum{}{\rho}z^{n}=\left[\sum_{n=0}^{+\infty}z_{\eps}^{n}\right]\in\RCcomplexrho
\]
and if $[\hat{z}_{\eps}]=z$ is another representative and $k\in\N_{>0}$,
then $\hat{z}_{\eps}\in\Eball_{1}(0)$ and we get $\left(\sum_{n=k}^{+\infty}n\hat{z}_{\eps}^{n-1}\right)=\left(\frac{1}{\left(1-\hat{z}_{\eps}\right)^{2}}\right)\in\CC_{\rho}$
because $|1-z|>0$ is invertible. In particular, the convergence of
the geometric hyperseries $\hypersum{}{\rho}\diff\rho^{n}=\frac{1}{1-\diff\rho}$.
\end{example}

\begin{example}[Exponential]
\label{exa:exponential}We clearly have $\left(\frac{1}{n!}\right)_{{\rm c}}\in\RCcomplexrho_{{\rm c}}$
and $\text{rad}\left(\frac{1}{n!}\right)_{{\rm c}}=+\infty$, i.e.
we have coefficients for an HPS with infinite radius of convergence.
Set 
\[
C:=\left\{ z\in\RCcomplexrho\mid\exists K\in\N:|z|<-K\log\diff\rho\right\} .
\]
For all $z=[z_{\eps}]\in C$ and all $N_{\eps}$, $M_{\eps}\in\N$,
we have $\left|\sum_{n=N_{\eps}}^{M_{\eps}}\frac{z_{\eps}^{n}}{n!}\right|\leq e^{|z_{\eps}|}\leq\rho_{\eps}^{-K}$for
$\eps$ small, and this shows that $\left(\frac{z^{n}}{n!}\right)_{n}\in\RCcomplexud{\rho}{}\left\llbracket z\right\rrbracket $,
i.e. for all $z\in C$, we have a formal HPS. We finally want to prove
that $C=\setconv{\rho}{}\left(\left(\frac{1}{n!}\right)_{{\rm c}},0\right)$.
The inclusion $\supseteq$ follows directly from Def.~\ref{def:setofconvergence}.
If $z=[z_{\eps}]\in C$, then $\left(\sum_{n=0}^{+\infty}\frac{z_{\eps}^{n}}{n!}\right)$
and $\left(\sum_{n=0}^{+\infty}n\frac{z_{\eps}^{n-1}}{n!}\right)\in\RCcomplexrho$.
To prove Def.~\ref{def:setofconvergence}\ref{enu:epsHPS}, assume
that $|z_{\eps}|<-K\log\rho_{\eps}=:M_{\eps}$ for all $\eps$ and
set $M:=[M_{\eps}]\in\RCrealrho$. Take $N=\left[N_{\eps}\right]\in\hypNr$
such that $\frac{M}{N+1}<\frac{1}{2}$, so that $\frac{M^{n+1}}{(n+1)!}<\frac{1}{2^{n+1}}$
and hence $\left|\sum_{n=N_{\eps}+1}^{+\infty}\frac{z_{\eps}^{n}}{n!}\right|\leq\sum_{n\geq N_{\eps}}\frac{1}{2^{n}}\rightarrow0$
as $N\rightarrow+\infty$, $N\in\hypNr$. Similarly, we can consider
trigonometric functions whose set of convergence is the whole of $\RCcomplexrho$.
\end{example}

\begin{example}[Dirac delta]
\label{exa:diracdelta}Recall from \cite[Sec. 3]{NuGi24a} that we
can define a Dirac delta embedded as a GHF, by $\delta_{\eps}(z):=\left(\delta\ast\mu_{\eps}\right)(z)=\mu_{\eps}(z)=\rho_{\eps}^{-2}\mu\left(\rho_{\eps}^{-1}z\right)$
where $\mu\in\mathcal{C}^{\infty}(\R^{2})$ is an entire function
such that $\int_{\R^{2}}\mu(x)\diff x=1$ ($\mu$ is called entire
mollifier in \cite{NuGi24a}). We have that $\left(\frac{\delta^{(n)}(0)}{n!}\right)_{{\rm c}}\in\RCcomplexrho_{{\rm c}}$
and $\text{rad}\left(\frac{\delta^{(n)}(0)}{n!}\right)_{{\rm c}}=+\infty$.
In fact, for $n\in\N$, we have $\left|\frac{\delta_{\eps}^{(n)}(0)}{n!}\right|=\left|\frac{\mu^{(n)}(0)}{n!}\rho_{\eps}^{-2n-2}\right|\leq\frac{1}{n!}\rho_{\eps}^{-2n-2}\leq\rho_{\eps}^{-2n-2}$
and 
\[
r_{\eps}^{-1}=\limsup_{n\rightarrow+\infty}\left|\frac{\delta_{\eps}^{(n)}(0)}{n!}\right|^{\nicefrac{1}{n}}=\limsup_{n\rightarrow+\infty}\rho_{\eps}^{-2(1+\nicefrac{1}{n})}\left|\frac{\mu^{(n)}(0)}{n!}\right|^{\nicefrac{1}{n}}=\rho_{\eps}^{-2(1+\nicefrac{1}{n})}\cdot0=0.
\]
For all $z=[z_{\eps}]\in\RCcomplexrho$ and all $N_{\eps}$, $M_{\eps}\in\N$,
we have
\[
\left|\sum_{n=N_{\eps}}^{M_{\eps}}\frac{\delta_{\eps}^{(n)}(0)}{n!}z_{\eps}^{n}\right|\leq\rho_{\eps}^{-2}\cdot\sum_{n=0}^{+\infty}\frac{|\mu^{(n)}(0)|}{n!}\left|\rho_{\eps}^{-1}z_{\eps}\right|^{n}=\rho_{\eps}^{-2}\mu\left(|\rho_{\eps}^{-1}z_{\eps}|\right)\in\CC_{\rho}.
\]
This proves that $\left(\frac{\delta^{(n)}(0)}{n!}\right)_{n}\in\RCcomplexud{\rho}{}\left\llbracket z\right\rrbracket $,
i.e.~we always have a formal HPS. It remains to prove that $\setconv{\rho}{}\left(\left(\frac{\delta^{(n)}(0)}{n!}\right)_{{\rm c}},0\right)=\RCcomplexrho$:
for all $z=[z_{\eps}]\in\RCcomplexrho$
\[
\sum_{n=0}^{N}\frac{\delta^{(n)}(0)}{n!}z^{n}=\left[\rho_{\eps}^{-2}\sum_{n=0}^{N_{\eps}}\frac{\mu^{(n)}(0)}{n!}\left(\rho_{\eps}^{-1}z_{\eps}\right)^{n}\right]=\delta(z)-\diff\rho^{-2}\left[\mu^{(N_{\eps}+1)}(\hat{z}_{\eps})\frac{z_{\eps}^{N_{\eps}+1}}{(N_{\eps}+1)!}\right]
\]
where the existence of $\hat{z}_{\eps}\in B_{\left|z_{\eps}\right|}(0)$
is derived from Taylor's formula of $\mu$ and $\mu^{(N_{\eps}+1)}(\hat{z}_{\eps})\frac{z_{\eps}^{N_{\eps}+1}}{(N_{\eps}+1)!}$
is the Lagrange remainder. We obtain 
\[
\left|\sum_{n=0}^{N}\frac{\delta^{(n)}(0)}{n!}z^{n}-\delta(z)\right|\leq\diff\rho^{-2}\left|\mu^{(N+1)}(\hat{z})\right|\frac{|z|^{N+1}}{\left(N+1\right)!}.
\]
Using Stirling's approximation, we have $\frac{|z|^{N+1}}{\left(N+1\right)!}\leq2\left(\frac{|z|e}{N}\right)^{N}\leq\diff\rho^{N}$,
for all $N\in\hypNr$ such that $N>|z|e\diff\rho^{-1}$. Since $\lim_{N\in\hypNr}\diff\rho^{N}=0$,
this proves the claim.
\end{example}

When we say that a HPS $\hypersum{}{\rho}a_{n}(z-c)^{n}$ is \textit{convergent},
we already assume that its coefficients are correctly chosen and that
the point $z$ is in the set of convergence, as stated in the following:
\begin{defn}
\label{def:convergentHPS}We say that $\hypersum{}{\rho}a_{n}(z-c)^{n}$
is a \textit{convergent} HPS, if
\begin{enumerate}
\item $(a_{n})_{{\rm c}}\in\RCcomplexrho_{{\rm c}}$ are coefficients for
HPS;
\item $z\in\setconv{\rho}{}\left((a_{n})_{{\rm c}},c\right)$.
\end{enumerate}
\end{defn}

We now study absolute convergence of HPS, and sharply boundedness
of the summands of a HPS. We first show that the hypersequence $\left(a_{n}\left(z-c\right)^{n}\right)_{n\in\hypNr}$
of the terms of a HPS is sharply bounded:
\begin{lem}
\label{lem:boundedsummands}Let $(a_{n})_{{\rm c}}\in\RCcomplexrho_{{\rm c}}$
and $c\in\RCcomplexrho$.
\begin{enumerate}
\item \label{enu:boundedsum}If $\hypersum{}{\rho}a_{n}(z-c)^{n}$ is a
convergent HPS, then
\[
\exists K\in\hypNr\;\forall n\in\hypNr:\ |a_{n}(z-c)^{n}|<K.
\]
\item \label{enu:eventuallyBounded}For all representatives $(a_{n})_{{\rm c}}=\left[a_{n\eps}\right]_{{\rm c}}$and
$[c_{\eps}]=c$, there exists $\delta\in\RCrealrho_{>0}$ such that
for all $z=[z_{\eps}]\in B_{\delta}(c)$, we have
\begin{equation}
\exists K=[K_{\eps}]\in\RCrealrho_{>0}\,\forall^{0}\eps\,\forall n\in\N:\ \left|a_{n\eps}(z_{\eps}-c_{\eps})^{n}\right|<K_{\eps}.\label{eq:eventuallybounded}
\end{equation}
We say that $(a_{n}(z-c)^{n})_{n\in\hypNr}$ is \textup{eventually
bounded} (in $\RCcomplexrho{}_{{\rm c}}$) if there exist representatives
$(a_{n})_{{\rm c}}=\left[a_{n\eps}\right]_{{\rm c}}$, $[z_{\eps}]=z$,
and $[c_{\eps}]=c$ such that \eqref{eq:eventuallybounded} holds.
\end{enumerate}
\end{lem}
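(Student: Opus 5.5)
For \ref{enu:boundedsum}, I will use Lem.~\ref{lem:limitseq}: since the HPS is convergent, $z$ lies in the set of convergence. By Def.~\ref{def:setofconvergence}\ref{enu:epsHPS}, the hyperseries $\hypersum{}{\rho}a_{n}(z-c)^{n}$ converges, so $\hyperlim{}{\rho}a_{n}(z-c)^{n}=0$. Taking $q=0$ in the definition of hyperlimit gives $M\in\hypNr$ with $|a_{n}(z-c)^{n}|<1$ for all $n\in\hypNr_{\ge M}$.

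It then remains to bound the terms with $n\le M$. Every $a_{N}(z-c)^{N}$ is a well-defined element of $\RCcomplexrho$ by Lem.~\ref{lem:extensionN}, because $[a_{n\eps}(z_{\eps}-c_{\eps})^{n}]_{\mathrm{s}}\in\RCcomplexrho_{\mathrm{s}}$. However, a pointwise bound for each $n$ is not uniform in $n$. To get uniformity, I will go back to representatives. By moderateness over hypersums \eqref{eq:moderateseries}, applied to the differences of partial sums as in the proof of Lem.~\ref{lem:extensionN}, we have
\[
|a_{N_{\eps},\eps}y_{\eps}^{N_{\eps}}|\le\Bigl|\sum_{n=0}^{N_{\eps}}\Bigr|+\Bigl|\sum_{n=0}^{N_{\eps}-1}\Bigr|.
\]
I would then argue by contradiction. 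Suppose no $\rho$-moderate bound $\rho_{\eps}^{-Q}$ works uniformly for $n\le M_{\eps}$. Then for each $Q$ we can pick $\eps_{Q}\downarrow0$ and $n_{\eps_{Q}}\le M_{\eps_{Q}}$ with $|a_{n_{\eps_Q}\eps_Q}y_{\eps_Q}^{n_{\eps_Q}}|>\rho_{\eps_Q}^{-Q}$. Extending this choice to a net $(n_{\eps})$ with $n_{\eps}\le M_{\eps}$ makes it $\rho$-moderate, so $N:=[n_{\eps}]\in\hypNr$. But then $a_{N}(z-c)^{N}$ would fail to be moderate, contradicting Lem.~\ref{lem:extensionN}. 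This produces $R\in\N$ with $|a_{n\eps}y_{\eps}^{n}|\le\rho_{\eps}^{-R}$ for all $n\le M_{\eps}$, and the bound transfers to every $n\in\hypNr$ with $n\le M$. Finally I take $K\in\hypNr$ with $K>\max(1,\diff\rho^{-R})$. A general $n\in\hypNr$ need not be comparable with $M$, so I will glue the two estimates on the $\eps$-subsets where $n_{\eps}\le M_{\eps}$ or $n_{\eps}\ge M_{\eps}$, using the language of subpoints. The two bounds hold with the same $K$, so gluing is harmless. I expect this uniformity and diagonal step to be the main obstacle.

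For \ref{enu:eventuallyBounded}, I will follow the estimate in the proof of Thm.~\ref{thm:nonemptysetofconvergence}. Fix representatives satisfying $|a_{n\eps}|\le\rho_{\eps}^{-nQ-R}$ for $\eps\le\eps_{0}$ and all $n\in\N$, from \eqref{eq:coeffmoderate}, and set $\delta:=\diff\rho^{Q}$. If $z\in B_{\delta}(c)$, then for any representative $|z_{\eps}-c_{\eps}|<\rho_{\eps}^{Q}$ for $\eps$ small, since the inequality $|z-c|<\delta$ is strict and so holds at the level of representatives. Hence
\[
|a_{n\eps}(z_{\eps}-c_{\eps})^{n}|\le\rho_{\eps}^{-nQ-R}\rho_{\eps}^{nQ}=\rho_{\eps}^{-R}
\]
for all $n\in\N$. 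Taking $K_{\eps}:=2\rho_{\eps}^{-R}$ gives \eqref{eq:eventuallybounded} uniformly in $n$. This part is routine.
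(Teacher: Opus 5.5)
Your proposal is correct. Part (ii) is the paper's argument. You add the small improvement $K_\eps:=2\rho_\eps^{-R}$, which actually delivers the strict inequality in \eqref{eq:eventuallybounded}; the paper's $K=\diff\rho^{-R}$ only gives $\le$, e.g.\ for $n=0$.

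For part (i) you share the paper's skeleton: a tail bound $|a_n(z-c)^n|<1$ for $n\ge M$ from Lem.~\ref{lem:limitseq}, a bound on the initial segment, and a case split over subpoints. Where you differ is the initial segment. The paper bounds $|a_n\hat z^n|$ on the subpoint where $n<N$ by the hyperfinite sum $s=\sum_{n=0}^{N-1}|a_n\hat z^n|$ and takes $K:=\max(s+1,1)$. This is shorter, but it has two weaknesses:
\begin{enumerate}
\item It tacitly assumes that the net of absolute values $(|a_{n\eps}\hat z_\eps^{n}|)_{n,\eps}$ is moderate over hypersums, so that $s$ is defined. This is not among the conditions of Def.~\ref{def:setofconvergence}.
\item Its $K$ is not a hypernatural number, as the statement requires.
\end{enumerate}
Your diagonal argument through Lem.~\ref{lem:extensionN} yields $R\in\N$ with $|a_{n\eps}y_\eps^{n}|\le\rho_\eps^{-R}$ for all $n\le M_\eps$ and all small $\eps$. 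This is exactly the missing uniformity in $n$. It also gives $\sum_{n=0}^{M_\eps}|a_{n\eps}y_\eps^{n}|\le(M_\eps+1)\rho_\eps^{-R}$, which is what makes the paper's $s$ well defined. Your choice of $K\in\hypNr$ above $\max(1,\diff\rho^{-R})$ matches the statement. So your route costs one contradiction step but is the more rigorous of the two.
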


\begin{proof}
\ref{enu:boundedsum}: We recall that because of the definition of
formal HPS Def.~\ref{def:formalHPS}, the term $a_{n}(z-c)^{n}\in\RCcomplexrho$
is well-defined for all $n\in\hypNr$. Set $\hat{z}:=z-c$, i.e.~without
loss of generality we can assume $c=0$. Since $\hypersum{}{\rho}a_{n}\hat{z}^{n}$
converges, from Lem. \ref{lem:limitseq}, we have
\begin{equation}
\exists N\in\hypNr\,\forall n\in\hypNr:|a_{n}\hat{z}^{n}|<1.\label{eq:boundedHPS}
\end{equation}
Let us consider an arbitrary $n\in\hypNr$. We have either $n\geq N$
or $n<_{L}N$ for some $L\subseteq_{0}I$. In the latter case, $|a_{n}\hat{z}^{n}|\leq_{L}s:=\sum_{n=0}^{N-1}|a_{n}\hat{z}^{n}|<\max\left(s+1,1\right)=:K$.
From this and \eqref{lem:boundedsummands}, the claim follows.

\noindent\ref{enu:eventuallyBounded}: Since $(a_{n})_{c}\in\RCcomplexud{\rho}{}_{c}$,
we have $\forall^{0}\eps\forall n\in\N:|a_{n\eps}|\leq\rho_{\eps}^{-nQ-R}$.
Therefore, for $\delta:=\diff\rho^{Q}$, we have $\left|a_{n\eps}(z_{\eps}-c_{\eps})^{n}\right|\leq\rho_{\eps}^{-nQ-R}\rho_{\eps}^{nQ}=\rho_{\eps}^{-R}=:K$.
\end{proof}
Note that the adjective \emph{eventually} in \ref{enu:eventuallyBounded}
above means that only for $\eps\le\eps_{0}$ ($\eps_{0}$ depending
on $(a_{n})_{{\rm c}}=\left[a_{n\eps}\right]_{{\rm c}}$, $[z_{\eps}]=z$,
and $[c_{\eps}]=c$) the sequence $n\in\N\mapsto a_{n\eps}(z_{\eps}-c_{\eps})^{n}\in\CC$
is bounded.

Unfortunately, property \eqref{eq:eventuallybounded} does not hold
for all points $z\in\setconv{\rho}{}\left(\left(a_{n}\right)_{{\rm c}},c\right)$.
For example, consider in Example \ref{exa:diracdelta} the Dirac delta
at $c=0$ and $|z|\leq\diff\rho^{2}$: we have $\left|\frac{\delta_{\eps}^{(n)}(0)}{n!}z_{\eps}^{n}\right|=\left|\frac{\mu^{(n)}(0)}{n!}\rho_{\eps}^{-2n-2}z_{\eps}^{n}\right|\leq\rho_{\eps}^{-2}$
for all $n\in\N$ such that $\left|\frac{\mu^{(n)}(0)}{n!}\right|\leq1$.
Therefore for this $z$, $\left(\frac{\delta^{(n)}(0)}{n!}z^{n}\right)_{n\in\hypNr}$
is eventually bounded in $\RCcomplexrho_{{\rm c}}$. However, if $|z|\gg0$,
i.e.~$|z|\geq s\in\R_{>0}$, then $\left|\frac{\delta_{\eps}^{(n)}(0)}{n!}z_{\eps}^{n}\right|=\left|\frac{\mu^{(n)}(0)}{n!}\rho_{\eps}^{-2n-2}z_{\eps}^{n}\right|\geq\left|\frac{\mu^{(n)}(0)}{n!}\right|\rho_{\eps}^{-2n-2}s^{n}$
and hence condition \eqref{eq:eventuallybounded} does not hold for
any $[K_{\eps}]\in\RCrealrho$.

On the other hand, Example \ref{exa:geometricHPS} of geometric hyperseries
and Example \ref{exa:exponential} of exponential HPS are all eventually
bounded at $c=0$ and for all finite number $z$. This follows from
the property that, for $\eps$ small, the series $\sum_{n=0}^{+\infty}\left|a_{n\eps}\left(z_{\eps}-c_{\eps}\right)^{n}\right|=:K_{\eps}$
of absolute values terms converges to a $\rho$-moderate net.

The question of whether the convergence set is a sharply open set
remains an unsolved problem. Notably, the case of exponential HPS
(see Example \ref{exa:exponential}) demonstrates that the convergence
set is not, in general, a disk. A partial solution is to consider
the interior of $\setconv{\rho}{}\left(\left(a_{n}\right)_{{\rm c}},c\right)$,
which is always not empty because of Lem.~\ref{lem:boundedsummands}
and of Thm.~\ref{thm:uniformlyconvergent} below. Another result
going in this direction is the following theorem, where interior points
of the set of convergence are related to points where we have eventual
boundedness.
\begin{thm}
\label{thm:uniformlyconvergent}Let $(a_{n})_{{\rm c}}=[a_{n\eps}]_{{\rm c}}\in\RCcomplexrho_{{\rm c}}$,
$c=[c_{\eps}]\in\RCcomplexrho$, and $\hat{z}\in\setconv{\rho}{}\left(\left(a_{n}\right)_{{\rm c}},c\right)$.
If $\left(a_{n}(\hat{z}-c)^{n}\right)_{n\in\hypNr}$ is eventually
bounded, then for \textup{all $z=[z_{\eps}]\in B_{\left|\hat{z}-c\right|}(c)$
we have}
\begin{enumerate}
\item \textup{\label{enu:absolute}The HPS $\hypersum{}{\rho}a_{n}(z-c)^{n}$
converges absolutely and }$\left(a_{n}(z-c)^{n}\right)_{n\in\hypNr}$
is eventually bounded. Moreover,
\[
\hypersum{}{\rho}a_{n}(z-c)^{n}=\left[\sum_{n=0}^{+\infty}a_{n\eps}(z_{\eps}-c_{\eps})^{n}\right]\in\RCcomplexrho;
\]
\item \label{enu:uniform}For all functionally compact set $K\Subset_{\text{\emph{f}}}\overline{B_{|\hat{z}-c|}(c)}$,
\textup{the HPS $\hypersum{}{\rho}a_{n}(z-c)^{n}$ converges uniformly
on $K$.}
\item \label{enu:interiorofsetofconv}The point $z$ is a sharply interior
point of $\setconv{\rho}{}\left(\left(a_{n}\right)_{{\rm c}},c\right)$.
\end{enumerate}
\end{thm}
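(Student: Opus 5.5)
Without loss of generality take $c=0$ (by the translation remark after Def.~\ref{def:setofconvergence}). By eventual boundedness there are representatives $[a_{n\eps}]_{\mathrm{c}}$, $[\hat z_{\eps}]=\hat z$ and $K=[K_{\eps}]\in\RCrealrho_{>0}$ with $|a_{n\eps}\hat z_{\eps}^{n}|<K_{\eps}$ for all $n\in\N$ and $\eps\le\eps_{0}$. By Thm.~\ref{thm:independencerepresentatives} the choice of representatives of $(a_n)_{\mathrm c}$ and $c$ does not matter. Fix $z=[z_{\eps}]\in B_{|\hat z|}(0)$. Since $|z|<|\hat z|$, \cite[Lem.~3]{NuGi24a} gives $m\in\N$ with $|z_{\eps}|+\rho_{\eps}^{m}<|\hat z_{\eps}|$ for $\eps$ small. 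Set $k_{\eps}:=|z_{\eps}|/|\hat z_{\eps}|$, so $k=[k_{\eps}]<1$ and $1-k\ge\diff\rho^{m}/|\hat z|$ is invertible. The whole proof rests on the Weierstrass-type bound
\[
|a_{n\eps}z_{\eps}^{n}|\le K_{\eps}k_{\eps}^{n}\qquad\forall n\in\N,\ \forall^{0}\eps ,
\]
which immediately gives eventual boundedness of $(a_n z^n)_n$ (with the same $K$).

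For \ref{enu:absolute}, I would use the geometric hyperseries of Example~\ref{exa:hypseries}\ref{enu:geometryconverge}. For all $N_{\eps}\le M_{\eps}$, with $M_{\eps}$ possibly $+\infty$,
\[
\sum_{n=N_{\eps}}^{M_{\eps}}|a_{n\eps}z_{\eps}^{n}|\le K_{\eps}\frac{k_{\eps}^{N_{\eps}}}{1-k_{\eps}} .
\]
This bound yields moderateness over hypersums (so $[a_{n\eps}z_{\eps}^n]_{\mathrm s}$ is a formal HPS) and $\rho$-moderateness of $\sum_{n\ge0}a_{n\eps}z_{\eps}^n$. It also shows that the tail $\bigl|\sum_{n=0}^{N}a_nz^n-[\sum_{n\ge0}a_{n\eps}z_{\eps}^n]\bigr|\le K k^{N+1}/(1-k)\to0$ as $N\in\hypNr\to\infty$, because $\hyperlim{}{\rho}k^{N}=0$ for $0\le k<1$; this is the computation of Example~\ref{exa:hypseries}\ref{enu:geometryconverge}, where the case $k$ not invertible is handled by replacing $k$ with $\max(k,\diff\rho^{q})$. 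Absolute convergence then follows from Thm.~\ref{thm:directcomparisonhyperseries} applied with $b_n=Kk^n$, noting that the comparison holds $\eps$-wise on hypersums. For the derivative condition \ref{enu:independence}, I would bound $\sum_n n|a_{n\eps}||\bar z_{\eps}|^{n-1}\le K_{\eps}|\hat z_{\eps}|^{-1}\sum_n n\bar k_{\eps}^{n-1}=K_{\eps}|\hat z_{\eps}|^{-1}(1-\bar k_{\eps})^{-2}$ for any representative $\bar z_{\eps}$ of $z$. Here $\bar k_{\eps}$ is moderately bounded away from $1$ uniformly in representatives, and $|\hat z|^{-1}$ is moderate because $|\hat z|>|z|\ge0$ strictly, so $|\hat z|$ is invertible. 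Condition \ref{enu:radofconvergence} holds because the bound forces $r_{\eps}\ge|\hat z_{\eps}|>|z_{\eps}|$. Hence $z\in\setconv{\rho}{}((a_n)_{\mathrm c},0)$, with value $\left[\sum_{n\ge0}a_{n\eps}z_{\eps}^n\right]$.

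For \ref{enu:uniform}, I expect the main obstacle to be the uniformity: on $\overline{B_{|\hat z|}(0)}$ the ratio $k$ may approach $1$, so the estimate must degenerate in a controlled way. I would first use that a functionally compact $K\Subset_{\mathrm f}\overline{B_{|\hat z|}(0)}$ on which the series is to converge should have $\max_{w\in K}|w|<|\hat z|$; I would derive this from compactness, since the modulus attains its maximum on functionally compact sets, by restricting to points where convergence is being claimed, i.e. interpreting the claim for $K\subseteq B_{|\hat z|}(0)$. Granting this, set $\bar k:=\max_K|w|/|\hat z|<1$ and obtain the uniform tail bound $\sup_{w\in K}\bigl|\sum_{n>N}a_nw^n\bigr|\le K\bar k^{N+1}/(1-\bar k)$, which tends to $0$ independently of $w$.

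For \ref{enu:interiorofsetofconv}, given $z$ with $|z|<|\hat z|$, pick $s\in\RCrealrho$ with $|z|<s<|\hat z|$ via Thm.~\ref{thm:radofconvergence}\ref{enu:densityofR}, for instance $s=(|z|+|\hat z|)/2$. Then $B_{s-|z|}(z)\subseteq B_{|\hat z|}(0)$, and by \ref{enu:absolute} every point of this ball lies in $\setconv{\rho}{}((a_n)_{\mathrm c},0)$. So $z$ is a sharply interior point.
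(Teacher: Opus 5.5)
For \ref{enu:absolute} and \ref{enu:interiorofsetofconv} you follow the paper's proof. Both arguments use the termwise bound $|a_{n\eps}z_{\eps}^{n}|\le K_{\eps}k_{\eps}^{n}$ with $k=|z|/|\hat z|<1$, compare with the geometric hyperseries $\hypersum{}{\rho}Kk^{n}$, and use the same derivative estimate for condition~\ref{enu:independence} of Def.~\ref{def:setofconvergence}. You are more careful than the paper in two places. First, you handle a non-invertible ratio $k$, which Example~\ref{exa:hypseries}\ref{enu:geometryconverge} does not cover since it needs $0<|k|$. Second, you state explicitly that interiority comes from the sharp openness of $B_{|\hat z|}(0)$.

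For \ref{enu:uniform} your route differs. The paper only invokes \cite[Thm.~74]{Giordano2021}, i.e.\ that pointwise convergence implies uniform convergence on functionally compact sets. You instead set $\bar k:=\max_{w\in K}|w|/|\hat z|$ and obtain the explicit uniform tail bound $K\bar k^{N+1}/(1-\bar k)$, which is self-contained and gives a rate. However, this proves \ref{enu:uniform} only for $K\Subset_{\rm f}B_{|\hat z|}(0)$. If $K$ merely lies in the closed ball, the maximum of $|w|$ on $K$ can equal $|\hat z|$, and then $\bar k=1$. So ``restricting to points where convergence is being claimed'' is not a derivation; it is a change of hypothesis, and you should state it as one. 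The change is forced: the closed-ball version is false. The paper's citation has the same limitation, because pointwise convergence is established by \ref{enu:absolute} only on the open ball.

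Here is a counterexample. Let $N_\eps:=\lceil e^{1/\rho_\eps}\rceil$, which is not $\rho$-moderate. Let $S_\eps(\zeta)=\sum_n a_{n\eps}\zeta^n$ be the polynomial with $S_\eps(0)=0$ and
\[
S_\eps'(\zeta)=\frac{1}{1-\zeta}\Bigl(1-\zeta^{N_\eps}\Bigl(\frac{1+\zeta}{2}\Bigr)^{N_\eps+1}\Bigr).
\]
Its coefficients satisfy $a_{n\eps}=\frac1n$ for $1\le n\le N_\eps$ and $0\le a_{n\eps}\le 1$ for all $n$, and $\text{rad}(a_n)_{\rm c}=+\infty$. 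The following facts hold:
\begin{itemize}
\item $S'_\eps$ is bounded on the Euclidean disc of radius $\rho_\eps$ around $-1$.
\item $|S_\eps(-1)+\log 2|\le 2^{-N_\eps}$.
\item Every $M\in\hypNr$ has ${\rm ni}(M)_\eps<N_\eps$ for $\eps$ small, so the hyperfinite partial sums at $-1$ are $\sum_{n=1}^{M}\frac{(-1)^n}{n}\to-\log 2$.
\end{itemize}
Hence $\hat z=-1\in\setconv{\rho}{}\left((a_n)_{\rm c},0\right)$, and $(a_n(-1)^n)_n$ is eventually bounded. But at $1\in\overline{B_1(0)}$ the hyperfinite partial sums are $\sum_{n=1}^{M}\frac1n$, which are not Cauchy. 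So the HPS does not even converge on the functionally compact set $K=\{1\}$. The reason is that coefficients at non-moderate indices are invisible to hyperfinite sums, while convergence at $\hat z$ and the derivative condition only control a $\rho^q$-neighbourhood of $\hat z$.

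Your direct argument therefore proves the correct version of \ref{enu:uniform}, namely for $K\Subset_{\rm f}B_{|\hat z-c|}(c)$. You should present it as a correction of the statement rather than an interpretation of it.
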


\begin{proof}
Without loss of generality we can assume $c=0$.

\noindent\ref{enu:absolute}: We have either $\hat{z}=_{L}0$ or
$\left|\hat{z}\right|>0$ for some $L\subseteq_{0}I$. The first case
is actually impossible because $0\leq|z|<|\hat{z}|=_{L}0$. We can
hence work only in the latter case $|\hat{z}|>0$. From Lemma \ref{lem:boundedsummands}\ref{enu:eventuallyBounded},
we have $\forall^{0}\eps\,\forall n\in\N:|a_{n\eps}\hat{z}_{\eps}^{n}|\leq K_{\eps}$.
Setting $h:=\left|\frac{z}{\hat{z}}\right|$, we have $h<1$, because
$z\in B_{|\hat{z}|}(0)$, and 
\begin{equation}
\forall^{0}\eps\,\forall n\in\N:|a_{n\eps}z_{\eps}^{n}|=|a_{n\eps}\hat{z}_{\eps}^{n}|\cdot\left|\frac{z}{\hat{z}}\right|^{n}<K_{\eps}h_{\eps}^{n}.\label{eq:eventuallybounded-1}
\end{equation}
Thereby, $\sum_{n=N}^{M}|a_{n}z^{n}|\leq\sum_{n=N}^{M}Kh^{n}$ for
all $N$, $M\in\hypNr$. By the direct comparison test (see Thm. \ref{thm:directcomparisonhyperseries}),
the HPS $\hypersum{}{\rho}a_{n}z^{n}$ converges absolutely because
$\hypersum{}{\rho}Kh^{n}$ converges since $h<1$. Moreover, from
\eqref{eq:eventuallybounded-1}, it follows that $\sum_{n=0}^{+\infty}|a_{n\eps}z_{\eps}^{n}|=:R_{\eps}$
converges and is $\rho$-moderate. This implies condition \eqref{eq:eventuallybounded}.

\noindent\ref{enu:uniform}: From \cite[Thm. 74]{Giordano2021},
we have that pointwise convergence implies uniform convergence on
functionally compact sets.

\noindent\ref{enu:interiorofsetofconv}: From the assumptions, $z\in B_{\left|\hat{z}\right|}(0)$
and $\left|\hat{z}\right|<{\rm rad}\left(a_{n}\right)_{{\rm c}}$,
conditions Def.~\ref{def:setofconvergence}\ref{enu:radofconvergence}
and Def.~\ref{def:setofconvergence}\ref{enu:epsHPS} follow. Note
that Def.~\ref{def:setofconvergence}\ref{enu:formalHPS} can be
proved as above from \eqref{eq:eventuallybounded-1}. Finally, if
$\left[\tilde{z}_{\eps}\right]=z$, we have
\begin{align*}
\left|\frac{{\rm d}}{{\rm d}z}\left(\sum_{n=0}^{+\infty}a_{n\eps}z^{n}\right)_{z=\tilde{z}_{\eps}}\right| & =\left|\sum_{n=0}^{+\infty}na_{n\eps}\tilde{z}_{\eps}^{n-1}\right|\\
 & \leq\sum_{n=0}^{+\infty}\left|a_{n\eps}\hat{z}_{\eps}^{n-1}\right|\cdot n\left|\frac{\tilde{z}_{\eps}}{\hat{z}_{\eps}}\right|^{n-1}\\
 & \leq K_{\eps}\left|\hat{z}_{\eps}\right|\sum_{n=0}^{+\infty}n\left|\frac{\tilde{z}_{\eps}}{\hat{z}_{\eps}}\right|^{n-1}\in\R_{\rho},
\end{align*}
where we used Lem.~\ref{lem:boundedsummands}, and hence Def.~\ref{def:setofconvergence}\ref{enu:independence}.
\end{proof}
Based on this theory of generalized complex HPS, we can finally introduce
the notion of generalized complex analytic functions (GCAF) and connect
it with GHF.

\section{\label{sec:gcaf}Generalized complex analytic functions}

For the theory of GHF and their path integrals, we refer to \cite{NuGi24a,NuGi24b}.

A direct consequence of Def.\ref{def:setofconvergence} is the following:
\begin{thm}
\label{thm:analyticthenGHF}Let $[a_{n\eps}]_{{\rm c}}=(a_{n})_{{\rm c}}\in\RCcomplexrho_{{\rm c}}$
(see Def.~\ref{def:radiusofconvergence}) and $c=[c_{\eps}]\in\RCcomplexrho$.
Let $U:=\mathrm{int}\left(\setconv{\rho}{}\left((a_{n})_{{\rm c}},c\right)\right)$
be the sharply interior of the set of convergence. Set 
\[
f(z):=\hypersum{}{\rho}a_{n}(z-c)^{n}=\left[\sum_{n=0}^{+\infty}a_{n\eps}(z_{\eps}-c_{\eps})^{n}\right]=:[v_{\eps}(z_{\eps})],
\]
for all $z=[z_{\eps}]\in U$. Then $f\in\ghf\left(U\right)$ is a
GHF defined by $(v_{\eps})$.
\end{thm}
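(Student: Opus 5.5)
The plan is to check directly the definition of GHF from \cite{NuGi24a}, using the net of ordinary holomorphic functions given by the $\eps$-wise power series, and then to invoke \cite[Thm.~26]{NuGi24a}. That theorem says that $\rho$-moderateness of the first derivative at every representative suffices for the higher-order moderateness conditions.

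First I fix the representatives $[a_{n\eps}]_{{\rm c}}$ and $[c_{\eps}]$, and let $r=[r_{\eps}]=\text{rad}(a_{n})_{{\rm c}}$ with $r_{\eps}$ as in \eqref{eq:radeps}. For every $\eps$, I set $\Omega_{\eps}:=\Eball_{r_{\eps}}(c_{\eps})\subseteq\CC$, with $\Omega_{\eps}=\CC$ when $r_{\eps}=+\infty$. I define $v_{\eps}(w):=\sum_{n=0}^{+\infty}a_{n\eps}(w-c_{\eps})^{n}$ for $w\in\Omega_{\eps}$. By the classical Cauchy--Hadamard theorem, each $v_{\eps}$ is holomorphic on $\Omega_{\eps}$. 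By Thm.~\ref{thm:radofconvergence}\ref{enu:radpositive}, $\Omega_{\eps}\ne\emptyset$.

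Second, I show that every $z\in U$ is eventually inside the domains, for every representative. Let $z\in U$ and take any $[z_{\eps}]=z$. By Def.~\ref{def:setofconvergence}\ref{enu:radofconvergence}, $|z-c|<r$. Then Thm.~\ref{thm:radofconvergence}\ref{enu:densityofR} gives $s\in\RCrealrho_{>0}$ with $|z-c|<s\le r$. Since $s-|z-c|$ is invertible, \cite[Lem.~3]{NuGi24a} gives $|z_{\eps}-c_{\eps}|<s_{\eps}\le r_{\eps}$ for $\eps$ small, so $z_{\eps}\in\Omega_{\eps}$. This handles the strongly internal condition $U\subseteq\langle\Omega_{\eps}\rangle$ needed in the GHF definition. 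Subpoints where $r_{\eps}=+\infty$ cause no problem, because there $\Omega_{\eps}=\CC$.

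Third come the value and moderateness conditions. By Thm.~\ref{thm:independencerepresentatives}\ref{enu:independence1}, the fixed representatives (together with any representative $(z_{\eps})$ of $z$) also satisfy the conditions of Def.~\ref{def:setofconvergence}. Condition \ref{enu:epsHPS} then gives that $(v_{\eps}(z_{\eps}))\in\CC_{\rho}$ and that $f(z)=[v_{\eps}(z_{\eps})]$, independently of the representative. Condition \ref{enu:independence} is exactly the $\rho$-moderateness of $(v'_{\eps}(\hat{z}_{\eps}))$ for all representatives $[\hat{z}_{\eps}]=z$. Since $U$ is sharply open by construction (it is a sharp interior), \cite[Thm.~26]{NuGi24a} upgrades these conditions to $f\in\ghf(U)$ defined by $(v_{\eps})$.

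The step I expect to need the most care is the second one. There, the sharp strict inequality $|z-c|<\text{rad}(a_{n})_{{\rm c}}$ in $\RCrealinfty{\rho}$ must be turned into an $\eps$-wise inclusion valid for every representative, even though $r$ may be $+\infty$ or non-moderate on some subpoints. I would handle this through the intermediate moderate $s$ from Thm.~\ref{thm:radofconvergence}\ref{enu:densityofR}, which reduces the matter to a comparison between genuine elements of $\RCrealrho$.
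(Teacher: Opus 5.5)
Your proposal is correct and follows the same route as the paper, which observes that the conditions of Def.~\ref{def:setofconvergence} supply the hypotheses of \cite[Thm.~26]{NuGi24a} and concludes from there. Your extra checks make explicit two details the paper leaves implicit: that $U$ lies in the strongly internal set generated by $\Eball_{r_{\eps}}(c_{\eps})$ (via Thm.~\ref{thm:radofconvergence}\ref{enu:densityofR}), and that the fixed representatives $(a_{n\eps})$, $(c_{\eps})$ may be used (via Thm.~\ref{thm:independencerepresentatives}).
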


\begin{proof}
Since $z\in\setconv{\rho}{}\left((a_{n})_{{\rm c}},c\right)$, then
$|z-c|<\text{rad}(a_{n})_{{\rm c}}$, $\left[a_{n\eps}(z_{\eps}-c_{\eps})^{n}\right]_{{\rm s}}\in\RCcomplexrho_{{\rm s}}\left\llbracket z-c\right\rrbracket $,
\[
\hypersum{}{\rho}a_{n}(z-c)^{n}=\left[\sum_{n=0}^{+\infty}a_{n\eps}(z_{\eps}-c_{\eps})^{n}\right]\in\RCcomplexrho,
\]
and for all representatives $[\hat{z}_{\eps}]=z$ we have
\[
\left(\frac{\text{d}}{\text{d}z}\left(\sum_{n=0}^{+\infty}a_{n\eps}(z-c_{\eps})^{n}\right)_{z=\hat{z}_{\eps}}\right)\in\CC_{\rho}.
\]
Therefore, the conclusion directly follows from \cite[Thm.~26]{NuGi24a}.
\end{proof}
Conversely, we conjecture that our GHF are generalized complex analytic
functions, as defined below:
\begin{defn}
Let $U\subseteq\RCcomplexrho$ be a simply connected and sharply open
set. We say that $f:U\ra\RCcomplexrho$ is a \textit{generalized complex
analytic function} (GCAF) on $U$, if for all $c\in U$, we can find
$s\in\RCrealrho_{>0}$, $\left(a_{n}\right)_{{\rm c}}\in\RCcomplexrho_{{\rm c}}$
such that
\begin{enumerate}
\item \label{enu:GCAFsetconv}$B_{s}(c)\subseteq U\cap\setconv{\rho}{}\left((a_{n})_{{\rm c}},c\right)$,
\item \label{enu:analyticatpoint}$f(z)=\sum_{n\in\hypNr}a_{n}\left(z-c\right)^{n}$
for all $z\in B_{s}(c)$.
\end{enumerate}
Moreover, we say that $f:\RCcomplexrho\ra\RCcomplexrho$ is an \emph{generalized
entire function} if we can find $c\in\RCcomplexrho$ and $\left(a_{n}\right)_{{\rm c}}\in\RCrealrho_{{\rm c}}$
such that
\begin{enumerate}[resume]
\item \label{enu:entiresetconv}$\RCcomplexrho=\setconv{\rho}{}\left((a_{n})_{{\rm c}},c\right)$,
\item \label{enu:entire}$f(z)=\sum_{n\in\hypNr}a_{n}\left(z-c\right)^{n}$
for all $z\in\RCcomplexrho$.
\end{enumerate}
We also say that $f$ is \emph{generalized entire at} $c$ if \ref{enu:entiresetconv}
and \ref{enu:entire} hold.
\end{defn}

\noindent Example \ref{exa:diracdelta} shows that Dirac $\delta$
is entire at $0$, but it is not at any $c\in\RCcomplexrho$ such
that $|z|\gg0$.

Clearly, if $\left(a_{n}\right)_{{\rm c}}\in\RCrealrho_{{\rm c}}$,
$c\in\RCcomplexrho$, and we set $f(z):=\sum_{n\in\hypNr}a_{n}\left(z-c\right)^{n}$
then $f$ is generalized complex analytic on the interior of the set
of convergence $\setconv{\rho}{}\left((a_{n})_{{\rm c}},c\right)$,
as proved in Thm.~\ref{thm:analyticthenGHF}. Vice versa, as expected,
our GHF are generalized analytic. For the proof, we follow the classical
strategy, leveraging our generalized framework to bypass the need
for explicit $\eps$-level calculations.
\begin{thm}[Goursat]
\label{thm:goursat} Suppose $U\subseteq\RCcomplexrho$ is a simply
connected and sharply open set. If $f:U\ra\RCcomplexrho$ is a generalized
holomorphic function then $f$ is generalized analytic on $U$.
\end{thm}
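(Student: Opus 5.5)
The plan is to follow the classical route: Cauchy integral formula, expand the Cauchy kernel in a geometric hyperseries, then exchange sum and integral. We work at the level of representatives only where unavoidable, using the GHF theory of \cite{NuGi24a,NuGi24b}. Fix $c=[c_\eps]\in U$ and let $f$ be defined by a net of holomorphic functions $(v_\eps)$ (\cite{NuGi24a}). Since $U$ is sharply open, choose $r\in\RCrealrho_{>0}$ with $\overline{B_{2r}(c)}\subseteq U$. Take the circular path $\gamma(t)=c+2r e^{it}$. The Cauchy integral formula for GHF from \cite{NuGi24b} then gives, for $z\in B_r(c)$,
\[
f(z)=\frac{1}{2\pi i}\int_\gamma\frac{f(w)}{w-z}\,\diff w,
\]
and also $f^{(n)}(c)=\frac{n!}{2\pi i}\int_\gamma\frac{f(w)}{(w-c)^{n+1}}\diff w$. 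We set $a_n:=\frac{f^{(n)}(c)}{n!}$, represented by $a_{n\eps}:=\frac{v_\eps^{(n)}(c_\eps)}{n!}$.

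First we check that $(a_n)_{\rm c}\in\RCcomplexrho_{\rm c}$ and estimate the radius. The ordinary Cauchy estimates at level $\eps$ on the circle of radius $2r_\eps$ give
\[
|a_{n\eps}|\le M_\eps(2r_\eps)^{-n},
\]
where $M_\eps:=\max_{|w-c_\eps|=2r_\eps}|v_\eps(w)|$. The net $(M_\eps)$ is $\rho$-moderate because $f$ is sharply bounded on the functionally compact set $\overline{B_{2r}(c)}$ (GHF are GSF-like, with the extreme value theorem on functionally compact sets). Since $r\ge\diff\rho^{Q}$, this yields $|a_{n\eps}|\le\rho_\eps^{-nQ'-R}$, i.e.\ weak moderateness \eqref{eq:coeffmoderate}, and also $r_\eps\ge 2r_\eps$ in \eqref{eq:radeps}, i.e.\ $\mathrm{rad}(a_n)_{\rm c}\ge 2r>r$.

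Next, for $z\in B_r(c)$ the sequence $(a_n(z-c)^n)_n$ satisfies, $\eps$-wise, $|a_{n\eps}(z_\eps-c_\eps)^n|\le M_\eps 2^{-n}$. This is eventually bounded in the sense of Lem.~\ref{lem:boundedsummands}, with a uniform geometric ratio $\le\frac12$. Arguing exactly as in the proof of Thm.~\ref{thm:uniformlyconvergent}, we then verify all conditions of Def.~\ref{def:setofconvergence}:
\begin{itemize}
\item the formal HPS condition, since hyperfinite partial sums are bounded by $2M_\eps$;
\item convergence of the hyperseries to $[\sum_{n}a_{n\eps}(z_\eps-c_\eps)^n]$, since the tail is bounded by $M_\eps 2^{-N_\eps}\to0$ as $N\in\hypNr$ grows, by Example~\ref{exa:hypseries}\ref{enu:geometryconverge};
\item moderateness of the derivative, by the bound $\sum nM_\eps 2^{-(n-1)}/r_\eps$.
\end{itemize}
Hence $B_r(c)\subseteq U\cap\setconv{\rho}{}((a_n)_{\rm c},c)$, which gives condition \ref{enu:GCAFsetconv} with $s=r$.

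Finally we show that $f(z)=\sum_{n\in\hypNr}a_n(z-c)^n$. At each fixed $\eps$, $v_\eps$ is holomorphic on a Euclidean neighbourhood of the closed disc of radius $2r_\eps$ about $c_\eps$. This is the delicate point: we must ensure that the representative $v_\eps$ is defined and holomorphic there for $\eps$ small. We get it from sharp openness and the definition of GHF via strongly internal sets, possibly shrinking $r$ to $r/2$. Classical Taylor expansion then gives $v_\eps(z_\eps)=\sum_n a_{n\eps}(z_\eps-c_\eps)^n$ for $|z_\eps-c_\eps|<r_\eps$, and combined with Def.~\ref{def:setofconvergence}\ref{enu:epsHPS} this yields $f(z)=[v_\eps(z_\eps)]=\sum_{n\in\hypNr}a_n(z-c)^n$. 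Alternatively, the same identity can be obtained internally by writing $\frac{1}{w-z}=\frac{1}{w-c}\sum_{n\in\hypNr}\left(\frac{z-c}{w-c}\right)^n$, a geometric HPS with ratio of modulus $\le\frac12$, and exchanging hyperlimit and path integral using uniform convergence on $\gamma$.

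The main obstacle is precisely the uniformity in $\eps$: moderateness of $M_\eps$ and the $\eps$-wise domain of holomorphy of $v_\eps$ on the full disc. That uniformity is what makes the tail estimates hold with the single gauge $\rho$; everything else reduces to the geometric hyperseries already treated.
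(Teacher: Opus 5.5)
Your proof is correct, but for the central identity it takes a different route from the paper. The paper stays at the generalized level. It writes $f(z)=\frac{1}{2\pi i}\int_{\gamma_R}\frac{f(w)}{w-z}\,\diff w$ and expands the kernel as the geometric hyperseries $\frac{1}{w-z_0}\sum_{n\in\hypNr}\bigl(\frac{z-z_0}{w-z_0}\bigr)^n$. It then exchanges hyperseries and path integral by appealing to uniform convergence on the functionally compact circle, so the coefficients appear directly as generalized Cauchy integrals. It does not separately verify that these coefficients lie in $\RCcomplexrho_{\rm c}$, or that $B_s(z_0)\subseteq\setconv{\rho}{}\left((a_n)_{\rm c},z_0\right)$.

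You instead use the Cauchy formula only to name the coefficients and then work with representatives. The single $\eps$-wise Cauchy estimate $|a_{n\eps}(z_\eps-c_\eps)^n|\le M_\eps 2^{-n}$ holds uniformly in $n$ for $\eps$ small. It gives weak moderateness, $\mathrm{rad}(a_n)_{\rm c}\ge 2r$, and all four conditions of Def.~\ref{def:setofconvergence}, i.e.\ condition \ref{enu:GCAFsetconv} of the definition of GCAF. The equality $f(z)=\sum_{n\in\hypNr}a_n(z-c)^n$ then comes from the classical Taylor theorem for $v_\eps$. The paper's route mirrors the classical proof inside the framework. Yours avoids having to justify the interchange of a hyperseries with a path integral, and it actually checks the membership conditions hidden in the GCAF definition. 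Your ``alternative'' internal argument is exactly the paper's proof.

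The uniformity issue you flag is settled without shrinking $r$. Write $U\subseteq\langle\Omega_\eps\rangle$ with $v_\eps$ holomorphic on $\Omega_\eps$. Suppose $\overline{\Eball_{2r_\eps}(c_\eps)}\not\subseteq\Omega_\eps$ along some $\eps_k\to0$. Pick points $w_{\eps_k}$ of that disc outside $\Omega_{\eps_k}$, and set $w_\eps:=c_\eps$ otherwise. Then $[w_\eps]\in\overline{B_{2r}(c)}\subseteq U$, yet $[w_\eps]\notin\langle\Omega_\eps\rangle$, a contradiction.

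Moderateness of $(M_\eps)$ follows the same way without the extreme value theorem. Choose $\eps$-wise maximizers $w_\eps$ of $|v_\eps|$ on the circle. Then $[w_\eps]\in U$, so $(M_\eps)=(|v_\eps(w_\eps)|)$ is moderate directly by the definition of GHF.
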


\begin{proof}
Let $z_{0}=[z_{0\eps}]\in U$ and $f=\left[f_{\eps}(-)\right]$. Then,
we can choose $R\in\RCrealrho_{>0}$ such that $\overline{B_{R}(z_{0})}\subset U$.
Let $0<s<R$. Set $\gamma_{R}(t):=z_{0}+R\exp(2\pi it)$ for all $t\in[0,1]$,
so that $\gamma_{R}$ is closed, simple, and positively oriented,
and the region bounded by $\gamma_{R}$ is $R_{\gamma_{R}}\subset U$.
Note that for all $z\in B_{s}(z_{0})$, by Cauchy integral formula
we have
\[
f(z)=\frac{1}{2\pi i}\int_{\gamma_{R}}\frac{f(w)}{w-z}\diff w.
\]
For all $w\in\gamma_{R}$, we have $R=|w-z_{0}|>0$, $\left|\frac{z-z_{0}}{w-z_{0}}\right|<\frac{s}{R}<1$
and hence the geometric hyperseries $\hypersum{\rho}{}\left(\frac{z-z_{0}}{w-z_{0}}\right)^{n}=\frac{1}{1-\left|\frac{z-z_{0}}{w-z_{0}}\right|}$
and 
\begin{align*}
\frac{1}{w-z} & =\frac{1}{w-z_{0}-(z-z_{0})}=\frac{1}{w-z_{0}}\left(\frac{1}{1-\left|\frac{z-z_{0}}{w-z_{0}}\right|}\right)\\
 & =\frac{1}{w-z_{0}}\left(\phantom{|}\hypersum{}{\rho}\left(\frac{z-z_{0}}{w-z_{0}}\right)^{n}\right).
\end{align*}
So, we have
\[
\frac{f(w)}{w-z}=\frac{f(w)}{w-z_{0}}\left(\phantom{|}\hypersum{}{\rho}\left(\frac{z-z_{0}}{w-z_{0}}\right)^{n}\right)=\hypersum{}{\rho}f(w)\frac{\left(z-z_{0}\right)^{n}}{\left(w-z_{0}\right)^{n+1}}
\]
and according to convergence theorem on functionally compact sets
\cite[Thm. 75]{Giordano2021}, pointwise convergence implies uniform
convergence on functionally compact set, and hence, we can exchange
integration and hyperseries to have
\begin{align*}
f(z) & =\frac{1}{2\pi i}\int_{\gamma_{R}}\hypersum{}{\rho}f(w)\frac{\left(z-z_{0}\right)^{n}}{\left(w-z_{0}\right)^{n+1}}\,\diff w\\
 & =\hypersum{}{\rho}\left(\frac{1}{2\pi i}\int_{\gamma_{R}}\frac{f(w)}{\left(w-z_{0}\right)^{n+1}}\,\diff w\right)\left(z-z_{0}\right)^{n}.
\end{align*}
\end{proof}
Classical properties of the interplay between holomorphic and analytic
functions are Liouville's theorem, the identity principle, and Paley-Wiener
theorem. We first prove Liouville's theorem:
\begin{thm}[Liouville]
\label{thm:Liouville}Every bounded entire $f\in\ghf(\RCcomplexrho)$,
i.e.~for which there exists $M\in\RCrealrho_{>0}$ such that $|f(z)|<M$
for all $z\in\RCcomplexrho$, is constant.
\end{thm}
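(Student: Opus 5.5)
The plan is to adapt the classical Cauchy-estimate proof, working with generalized numbers throughout so that no $\eps$-level computation is needed. Fix two points $z_{1},z_{2}\in\RCcomplexrho$ and show $f(z_{1})=f(z_{2})$; it suffices to show $|f'(z_{0})|\le\diff\rho^{q}$ for every $q\in\N$ and every $z_{0}\in\RCcomplexrho$. Then $f'=0$ on $\RCcomplexrho$, and the fundamental theorem for path integrals of GHF in \cite{NuGi24b}, applied along the segment from $z_{1}$ to $z_{2}$, gives $f(z_{2})-f(z_{1})=\int_{[z_{1},z_{2}]}f'(w)\,\diff w=0$.

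For the derivative estimate, fix $z_{0}$ and a radius $R\in\RCrealrho_{>0}$. Take the circle $\gamma_{R}(t)=z_{0}+R\exp(2\pi it)$, which is a legitimate path in $U=\RCcomplexrho$, as in the proof of Thm.~\ref{thm:goursat}. The Cauchy integral formula for derivatives of GHF from \cite{NuGi24b} gives
\[
f'(z_{0})=\frac{1}{2\pi i}\int_{\gamma_{R}}\frac{f(w)}{(w-z_{0})^{2}}\,\diff w .
\]
We could equally use the coefficient $a_{1}$ produced in the proof of Thm.~\ref{thm:goursat}, which is the same integral. The standard estimate for path integrals (length of the path times the supremum of the integrand, again from \cite{NuGi24b}), combined with $|f(w)|<M$ and $|w-z_{0}|=R$, yields
\[
|f'(z_{0})|\le\frac{1}{2\pi}\cdot2\pi R\cdot\frac{M}{R^{2}}=\frac{M}{R}.
\]

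The key non-Archimedean point, and the expected main obstacle, is the choice of $R$. Letting $R\to+\infty$ along $\N$ does not give $M/R\to0$ in the sharp topology, because $M$ may be infinite and $1/n\not\to0$. Instead, $R$ must be chosen as a genuinely infinite generalized number, namely $R:=M\diff\rho^{-q}$. This lies in $\RCrealrho_{>0}$ because $M$ is invertible and $\rho$-moderate, and it gives $|f'(z_{0})|\le\diff\rho^{q}$. Two things must then be checked: that the Cauchy formula and the integral estimate hold for circles of arbitrary radius in $\RCrealrho_{>0}$, including infinite ones, and that the bound $|f(w)|<M$ holds on the whole image of $\gamma_{R}$. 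The second is immediate since $f$ is bounded on all of $\RCcomplexrho$. Since $q$ is arbitrary, $f'(z_{0})=0$, which completes the argument.
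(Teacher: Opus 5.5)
Your proof is correct, but it takes a different classical route from the paper's.

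\textbf{The paper's route.} It fixes the single center $0$ and takes the Goursat expansion $f(z)=\sum_{n\in\hypNr}a_nz^n$ with $a_n=\frac{1}{2\pi i}\int_{\gamma_r}f(w)w^{-n-1}\,\diff w$. It derives $|a_n|\le M/r^n$ for every $r\in\RCrealrho_{>0}$ and lets $r\to+\infty$ in the sharp topology to get $a_n=0$ for $n\ge 1$. It then reads off $f=a_0$ from the hyperseries.

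\textbf{Your route.} You use only the first Cauchy estimate, but at every center, and finish with the fundamental theorem of calculus along segments. Your explicit choice $R=M\diff\rho^{-q}$ is exactly what makes the paper's ``$r\to+\infty$'' precise. Your remark that standard radii $R\in\N$ would not suffice when $M$ is infinite is right. The point you flag, validity of the Cauchy formula on circles of infinite radius, is used in the same way by the paper, which allows any $r\in\RCrealrho_{>0}$.

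\textbf{What your route buys.} It needs no hyperseries at all: only the Cauchy formula for $f'$, the length-times-sup estimate, and the fundamental theorem of calculus for GSF. So it applies to every bounded $f\in\ghf(\RCcomplexrho)$ without any ``entire'' hypothesis. It also sidesteps the delicate last step of the paper's argument. Knowing $a_n=0$ for each standard $n\in\N_{>0}$ does not by itself make the hyperfinite tails $\sum_{n=1}^{N}a_nz^n$, $N\in\hypNr$, vanish. Compare the remark in the proof of Lem.~\ref{lem:extensionN}: agreement for all $n\in\N$ does not give equality in $\RCcomplexrho_{{\rm s}}$. Closing that step needs the Cauchy bound at the $\eps$-level and uniformly in $n$.

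\textbf{What the paper's route buys.} It stays within the hyperseries (analytic) picture that the section is built around. Once that step is filled in, it shows directly that the Taylor hyperseries of $f$ collapses to its constant term.
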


\begin{proof}
Without loss of generality, we can assume that $f$ is entire at $c=0$.
By Goursat's theorem \ref{thm:goursat}, for all $z\in\RCcomplexrho$
we have
\[
f(z)=\hypersum{}{\rho}a_{n}z^{n}\;\text{where}\;a_{n}=\frac{f^{(n)}(0)}{n!}=\frac{1}{2\pi i}\int_{\gamma_{r}}\frac{f(w)}{w^{n+1}}\,\diff w
\]
where $\gamma_{r}$ is any circle centered at $0$ of radius $r\in\RCrealrho_{>0}$
since $f$ is entire in $\RCcomplexrho$. Therefore,
\begin{align*}
0\le|a_{n}|=\left|\frac{1}{2\pi i}\int_{\gamma_{r}}\frac{f(w)}{w^{n+1}}\diff w\right| & \leq\frac{1}{2\pi}\int_{\gamma_{r}}\frac{|f(w)|}{|w|^{n+1}}\,\diff w\\
 & \leq\frac{1}{2\pi}\int_{\gamma_{r}}\frac{M}{r^{n+1}}\diff w\\
 & =\frac{M}{2\pi r^{n+1}}2\pi r\\
 & =\frac{M}{r^{n}}.
\end{align*}
Letting $r\to+\infty$ in the sharp topology gives $|a_{n}|=0$ for
all $n\in\N$. Thus, $f(z)=a_{0}$.
\end{proof}
Before we study the identity theorem for generalized holomorphic functions,
we extend one important properties of a holomorphic function i.e.~its
zeroes are isolated (assuming the function is not identically zero):
\begin{thm}
\label{thm:zeros}Let $U\subseteq\mathbb{\RCcomplexrho}$ be a sharply
open set, $z_{0}\in U$, and $f\in\ghf(U)$. If 
\begin{equation}
f(z_{0})=0,\;\exists n\in\N_{>0}:f^{(n)}(z_{0})\neq0,\label{eq:zeros}
\end{equation}
then there exist $L=L(z_{0})\subseteq_{0}I$ and $r\in\RCrealrho_{>0}$
such that 
\begin{equation}
\forall z\in B_{r}(z_{0}):|z-z_{0}|>0\implies f(z)\mid_{L}\;\text{is invertible.}\label{eq:zeros-1}
\end{equation}
\end{thm}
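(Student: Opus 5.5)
The plan is to follow the classical argument: factor $f(z)=(z-z_{0})^{m}g(z)$ with $g(z_{0})$ invertible on a subpoint, and then use sharp continuity of $g$.

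First we choose the minimal order. By \eqref{eq:zeros} the set of $n\in\N_{>0}$ with $f^{(n)}(z_{0})\neq0$ is nonempty. Let $m$ be its least element, so that $f^{(k)}(z_{0})=0$ for $0\le k<m$ and $f^{(m)}(z_{0})\neq0$. In $\RCcomplexrho$, being nonzero does not mean being invertible. However, $f^{(m)}(z_{0})\ne0$ means that its representative is not negligible. Hence there exist $L\subseteq_{0}I$ and $q\in\N$ with $|f^{(m)}(z_{0})|\ge_{L}\diff\rho^{q}$, i.e.~$f^{(m)}(z_{0})|_{L}$ is invertible (this is the standard subpoint characterization, cf.~\cite[Sec.~1.3]{NuGi24a}). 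This $L$ is the $L(z_{0})$ of the statement.

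Next comes the local expansion. By Thm.~\ref{thm:goursat} applied on a small ball $B_{R}(z_{0})\subseteq U$ (balls are simply connected), there are $s>0$ and coefficients $a_{n}=\frac{f^{(n)}(z_{0})}{n!}$ with $f(z)=\hypersum{}{\rho}a_{n}(z-z_{0})^{n}$ on $B_{s}(z_{0})$, and $a_{0}=\dots=a_{m-1}=0$. Define $g(z):=\hypersum{}{\rho}a_{n+m}(z-z_{0})^{n}$. To keep the convergence theory under control, I would rather define $g$ at the $\eps$-level: $g_{\eps}(z):=\sum_{n\ge0}a_{n+m,\eps}(z-z_{0\eps})^{n}$, where $a_{n\eps}$ comes from the Cauchy integral over $\gamma_{R}$ of $f_{\eps}$. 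Equivalently, $g_{\eps}(z)=(f_{\eps}(z)-\sum_{k<m}a_{k\eps}(z-z_{0\eps})^{k})/(z-z_{0\eps})^{m}$, extended holomorphically. Cauchy estimates on $\gamma_{R}$ give $|a_{n\eps}|\le \sup_{\gamma_{R}}|f_{\eps}|\,R_{\eps}^{-n}$. This yields moderateness of $g_{\eps}$ and of its derivative on $B_{R/2}(z_{0})$ by geometric bounds as in the proof of Thm.~\ref{thm:nonemptysetofconvergence}, so $g$ is a GHF there. The terms $a_{k\eps}$ for $k<m$ are negligible, so $f(z)=(z-z_{0})^{m}g(z)$ holds in $\RCcomplexrho$ for $z\in B_{R/2}(z_{0})$, and $g(z_{0})=a_{m}=f^{(m)}(z_{0})/m!$.

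Then we use continuity. Since $g$ is a GHF it is sharply continuous, so there is $r\in\RCrealrho_{>0}$, $r\le R/2$, with $|g(z)-g(z_{0})|<\frac{1}{2}\diff\rho^{q}/m!$ for $z\in B_{r}(z_{0})$. Hence $|g(z)|\ge_{L}\frac{1}{2m!}\diff\rho^{q}$, and so $g(z)|_{L}$ is invertible. If moreover $|z-z_{0}|>0$, then $|z-z_{0}|$ is invertible, and so is its $m$-th power. Therefore $f(z)|_{L}=(z-z_{0})^{m}g(z)|_{L}$ is invertible, which proves \eqref{eq:zeros-1}.

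The main obstacle is the factorization step: showing that $g$ is a genuine GHF with $f=(z-z_{0})^{m}g$ exactly in $\RCcomplexrho$. One must handle the fact that the $\eps$-level Taylor coefficients $a_{k\eps}$, $k<m$, are only negligible and not zero, and one must get the uniform $\eps$-wise bounds needed for moderateness. I would do this with the Cauchy estimates on $\gamma_{R}$ rather than through the abstract set-of-convergence machinery.
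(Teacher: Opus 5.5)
Your proposal is correct and follows the same route as the paper: take the minimal $m$, factor $f(z)=(z-z_{0})^{m}g(z)$ near $z_{0}$, extract a subpoint $L$ on which $g(z_{0})$ is invertible, and propagate invertibility to a ball using the sharp continuity of $g$. The only difference is in how $g$ is built. The paper defines $g$ directly as the hyperseries $\sum_{n\in\hypNr_{\geq m}}\frac{f^{(n)}(z_{0})}{n!}(z-z_{0})^{n-m}$ and takes both the factorization and the sharp continuity of $g$ for granted, whereas your $\eps$-wise construction with Cauchy estimates actually justifies these two points. It also gives the correct identities $g(z_{0})=f^{(m)}(z_{0})/m!$ and $|f(z)|=|z-z_{0}|^{m}|g(z)|$, where the paper has minor slips.
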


\begin{proof}
Since $f\in\ghf(U)$, we have
\begin{equation}
\exists s\in\RCrealrho_{>0}\,\forall z\in B_{s}(z_{0}):f(z)=\hypersum{}{\rho}\frac{f^{(n)}(z_{0})}{n!}(z-z_{0})^{n}.\label{eq:analytic}
\end{equation}
From \eqref{eq:zeros}, there exists $m:=\min\left\{ n\in\N_{>0}:f^{(n)}(z_{0})\neq0\right\} >0$,
and $f^{(m)}(z_{0})\neq0$. On the other hand, from \eqref{eq:analytic},
we have
\[
f(z)=(z-z_{0})^{m}\cdot\sum_{n\in\hypNr_{\geq m}}\frac{f^{(n)}(z_{0})}{n!}(z-z_{0})^{n-m}=:(z-z_{0})^{m}\cdot g(z),
\]
with $g(z_{0})=f^{(m)}(z_{0})\neq0$. This means $\exists L\subseteq_{0}I:g(z_{0})\mid_{L}$
is invertible, hence $\left|g(z_{0})\right|\mid_{L}>0$. Since $g(-):B_{s}(z_{0})\ra\RCcomplexrho$
is sharply continuous, then $\exists r\in(0,s)$ $\forall z\in B_{r}(z_{0}):\left|g(z)\right|\mid_{L}>0$.
Therefore:
\[
\forall z\in B_{r}(z_{0}):|z-z_{0}|>0\implies|f(z)|=|z-z_{0}|\cdot|g(z)|,
\]
which proves our claim: $f(z)\mid_{L}$ is invertible.
\end{proof}
Note that \eqref{eq:zeros} represents the property of being non-zero.
Moreover, the conclusion \eqref{eq:zeros-1} states that $z_{0}$
is isolated, at least on a suitable subpoint.

As we already discussed in \cite[Sec. 4]{NuGi24a}, since $\delta$
is a GHF, and hence is a generalized analytic function, in general
the identity principle does not hold for GHF. The following theorem
shows that the identity principle does not hold in our framework because
of two motivations: we are in a non-Archimedean setting and hence
the set of all the infinitesimals is a clopen set (see \cite[Thm.~43]{NuGi24a}),
and because analytic continuation from a point $z_{0}$ to another
$z$ cannot, in general, be reversed for GHF:
\begin{thm}[Identity principle]
\label{thm:identity}Let $U\subseteq\mathbb{\RCcomplexrho}$ be a
sharply open set, $(z_{k})_{k\in\hypNr}$ be a hypersequence of points
in $U$ such that $\hyplimsarg{\rho}{k}z_{k}=z_{0}\in U$, $\left|z_{k}-z_{0}\right|>0$
for all $k\in\hypNr$, and $f\in\ghf(U)$ such that $f(z_{k})=0$
for all $k\in\hypNr$. Assume that $f$ \textup{can be continued}
$z_{0}\ra z$ (we read it: ``from $z_{0}$ to z'') for all $z\in U$,
i.e.~there exist $N\in\N$, $c_{1},c_{2},\dots,c_{N}\in U$ and $r_{1},r_{2},\dots,r_{N}\in\RCrealrho_{>0}$
such that:
\begin{enumerate}
\item \label{enu:identity1}$c_{1}=z_{0}$, $c_{N}=z$,
\item \label{enu:identity2}$c_{k+1}\in B_{r_{k}}(c_{k})$ for all $k=1,\dots,N-1$,
\item \label{enu:identity3}$B_{r_{k}}(c_{k})\subseteq\setconv{\rho}{}\left(f,c_{k}\right)$
for all $k=1,\dots,N$.
\end{enumerate}
Then,
\[
f(z)=0\quad\forall z\in U.
\]
\end{thm}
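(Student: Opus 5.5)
Two steps: first show that all the hyper-power-series coefficients of $f$ at $z_{0}$ vanish, so $f\equiv0$ on $B_{r_{1}}(z_{0})$; then propagate this along the chain $c_{1},\dots,c_{N}$ by a finite induction.

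\emph{Step 1 (vanishing at $z_{0}$).} By Thm.~\ref{thm:goursat}, on $B_{r_{1}}(z_{0})$ we have $f(z)=\hypersum{}{\rho}a_{n}(z-z_{0})^{n}$ with $a_{n}=\frac{f^{(n)}(z_{0})}{n!}$; here I read $\setconv{\rho}{}(f,c_{k})$ in \ref{enu:identity3} as the set of convergence of the coefficients $\bigl(\frac{f^{(n)}(c_{k})}{n!}\bigr)_{\mathrm{c}}$. I will show $a_{n}=0$ for all $n\in\N$ by induction on $n$.

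By sharp continuity of $f$ (see the hyper-sequential continuity theorem), $a_{0}=f(z_{0})=\hyplimsarg{\rho}{k}f(z_{k})=0$. Now suppose $a_{0}=\dots=a_{m-1}=0$ and write $f(z)=(z-z_{0})^{m}g_{m}(z)$, with $g_{m}(z):=\sum_{n\in\hypNr_{\geq m}}a_{n}(z-z_{0})^{n-m}$, exactly as in the proof of Thm.~\ref{thm:zeros}. Then $g_{m}$ is a convergent HPS, hence a GHF by Thm.~\ref{thm:analyticthenGHF}, so it is sharply continuous and $g_{m}(z_{0})=a_{m}$. For $k$ large, $z_{k}\in B_{r_{1}}(z_{0})$. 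Since $|z_{k}-z_{0}|>0$, the number $(z_{k}-z_{0})^{m}$ is invertible, so $0=f(z_{k})$ forces $g_{m}(z_{k})=0$. Passing to the hyperlimit gives $a_{m}=0$.

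This is where the non-Archimedean subtlety lies. The hypothesis $|z_{k}-z_{0}|>0$ means invertibility on the whole index set, not merely on a subpoint. That is precisely what prevents the Dirac-type counterexamples and avoids the subpoint conclusion of Thm.~\ref{thm:zeros}. With all $a_{n}=0$ for $n\in\N$, the coefficient class $(a_{n})_{\mathrm{c}}$ is zero: by Goursat the coefficients are given by $\eps$-wise Cauchy integrals, and these are uniformly negligible in $n$ in the sense of \eqref{eq:coeffequiv}. I must check this uniformity carefully, perhaps via the Cauchy estimate $|a_{n\eps}|\le M_{\eps}/R_{\eps}^{n}$ on a slightly larger circle together with the vanishing of $f$. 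If that route fails, I would bound the hyper-tail directly. Either way, Def.~\ref{def:setofconvergence}\ref{enu:epsHPS} then gives $f\equiv0$ on $B_{r_{1}}(c_{1})$.

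\emph{Step 2 (continuation along the chain).} Assume $f\equiv0$ on $B_{r_{k}}(c_{k})$. Since $B_{r_{k}}(c_{k})$ is sharply open and $c_{k+1}\in B_{r_{k}}(c_{k})$, $f$ vanishes on a sharp neighbourhood of $c_{k+1}$. Hence all derivatives $f^{(n)}(c_{k+1})=0$, since the derivatives of a GHF are computed locally. By \ref{enu:identity3} and Goursat, the expansion of $f$ around $c_{k+1}$ has zero coefficients, so $f\equiv0$ on $B_{r_{k+1}}(c_{k+1})$. After $N-1$ steps, $f(z)=f(c_{N})=0$, and since $z\in U$ was arbitrary the claim follows.

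The main obstacle is the passage, in Step 1, from ``$a_{n}=0$ for every standard $n$'' to ``the HPS is zero'' (zero in $\RCcomplexrho_{\mathrm{c}}$, or at least zero sum). My plan for it is the Cauchy-integral representation combined with the estimate above.
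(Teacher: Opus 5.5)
You follow the paper's own route. Its Case~I kills the standard Taylor coefficients at $z_0$ via the factorization $f=(z-z_0)^m g$ of Thm.~\ref{thm:zeros}; your direct induction followed by a hyperlimit is a cleaner version of its argument by contradiction, since it avoids subpoints. Its Case~II then propagates along the chain by finite induction.

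\textbf{The gap.} It is exactly the step you flag: passing from ``$a_n=0$ for all $n\in\N$'' to ``$f\equiv0$ on $B_{r_1}(z_0)$''. The same step recurs in your Step~2, from ``$f$ vanishes on a sharp neighbourhood of $c_{k+1}$'' to ``$f\equiv0$ on $B_{r_{k+1}}(c_{k+1})$''. The paper's proof makes this passage silently, but the implication is false, so no estimate can fill it.

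The reason is structural. The hyperlimit defining $\hypersum{}{\rho}a_n(z-z_0)^n$ runs over $M\in\hypNr$, in which $\N$ is not cofinal. The coefficients with infinite index are not determined by the standard ones (cf.\ the final remark in the proof of Lem.~\ref{lem:extensionN}). Your two proposed repairs both fail:
\begin{itemize}
\item If $f$ vanishes on an infinitesimal circle of radius $R_\eps$, the Cauchy estimate only gives $|a_{n\eps}|\le\rho_\eps^{s}R_\eps^{-n}$ for each fixed $s$. This is not uniform in $n$, as \eqref{eq:coeffequiv} requires. On circles of finite radius, where $R^{-n}$ is harmless, $f$ need not vanish.
\item The hyper-tail need not vanish either, as the example below shows.
\end{itemize}

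\textbf{A counterexample to the statement itself.} Let $\nu_\eps:=\lceil\log_2\rho_\eps^{-1}\rceil$, $\nu:=[\nu_\eps]\in\hypNr$, $U:=B_1(0)$ and $f(z):=z^{\nu}=[z_\eps^{\nu_\eps}]$.
\begin{itemize}
\item The coefficients at $0$ are $a_{n\eps}=1$ if $n=\nu_\eps$ and $a_{n\eps}=0$ otherwise. So $a_n=0$ for every $n\in\N$, but $a_\nu=1$.
\item These coefficients are weakly moderate with radius $+\infty$. For $|z|<1$, every hyperfinite sum has modulus at most $1$ and equals $z^{\nu}$ once $M\ge\nu$. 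Also $|\nu_\eps\hat z_\eps^{\nu_\eps-1}|\le\nu_\eps$ for every representative.
\item Hence $U=B_1(0)\subseteq\setconv{\rho}{}\left((a_n)_{\mathrm{c}},0\right)$, so even the paper's Case~I applies. Moreover $f\in\ghf(U)$ by Thm.~\ref{thm:analyticthenGHF}.
\item Take $z_k:=\diff\rho/(k+1)$, $k\in\hypNr$. Then $|z_k|>0$, $z_k\to0$ (Example~\ref{exa:hypsequence}), and $|f(z_k)|\le[\rho_\eps^{\nu_\eps}]=0$ because $\nu_\eps\to+\infty$. So the invertibility of $z_k-z_0$ that you single out does not help.
\item For every $z\in U$, the chain $c_1=0$, $r_1=1$, $c_2=z$, $r_2=\diff\rho^q$ is admissible. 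Indeed, the Taylor coefficients $\binom{\nu_\eps}{n}z_\eps^{\nu_\eps-n}$ at $z$ are bounded by $2^{\nu_\eps}\le2\rho_\eps^{-1}$, so Thm.~\ref{thm:nonemptysetofconvergence} applies.
\item Yet $f(1/2)=[2^{-\nu_\eps}]\ge\diff\rho/2$, so $f(1/2)\neq0$. Equivalently, the hyper-tail $\sum_{n=K}^{M}a_n2^{-n}$ equals $2^{-\nu}\neq0$ whenever $K\le\nu\le M$.
\end{itemize}

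So the theorem cannot be proved as stated, by your argument or the paper's. A correct version needs an additional hypothesis controlling the coefficients with infinite hypernatural index, e.g.\ their uniform negligibility in the sense of \eqref{eq:coeffequiv}. The continuation hypothesis does not provide this.
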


\begin{proof}
We consider two cases:

\noindent Case I: If $U\subseteq\setconv{\rho}{}\left(f,z_{0}\right)$,
then 
\[
f(z)=\hypersum{}{\rho}\frac{f^{(n)}(z_{0})}{n!}(z-z_{0})^{n}\quad\forall z\in U.
\]
By contradiction, assume $\exists n\in\N_{>0}:f^{(n)}(z_{0})\neq0$.
As we did in the proof of Thm.~\ref{thm:zeros}, there exists $m:=\min\left\{ n\in\N_{>0}:f^{(n)}(z_{0})\neq0\right\} >0$,
which implies $f^{(m)}(z_{0})\neq0$, and there exist $L=L(z_{0})\subseteq_{0}I$,
a continuous function $g$, and $r\in\RCrealrho_{>0}$ such that 
\begin{equation}
\forall z\in B_{r}(z_{0}):|z-z_{0}|>0\implies\left|f(z)\right|\mid_{L}=|z-z_{0}|\cdot|g(z)|\mid_{L}>0\label{eq:case1}
\end{equation}
with $g(z_{0})=f^{(m)}(z_{0})\neq0$. However, the assumption $\hyplimsarg{\rho}{k}z_{k}=z_{0}\in U$,
yields the existence of $K\in\hypNr$ such that $z_{K}\in B_{r}(z_{0})$
and $|z_{K}-z_{0}|>0$. This means $f(z_{K})=0$, which implies $g(z_{K})=0$
and hence also $|g(z_{K})|\mid_{L}=0$, which is a contradiction with
\eqref{eq:case1}.

\noindent Case II: Let $z\in U$ and only in this case assume that
$f$ can be extended $z_{0}\ra z$. From Case I, we have $f\mid_{B_{r_{0}}(c_{0})}=0$
becausee $B_{r_{0}}(c_{0})\subseteq\setconv{\rho}{}\left(f,c_{0}\right)$.
Since $c_{1}\in B_{r_{0}}(c_{0})$, there exists $(z_{1k})_{k\in\hypNr}$
sequence of $B_{r_{0}}(c_{0})$ converging to $c_{1}$ and such that
$\left|z_{1k}-c_{1}\right|>0$. Since $\hyplimsarg{\rho}{k}z_{1k}=c_{1}$,
$f(z_{1k})=0$ for all $k\in\N$, we have $f\mid_{B_{r_{1}}(c_{1})}=0$.
Since we only have a finite number of $c_{k}$, by induction, we have
$f\mid_{B_{r_{N}}(c_{N})}=0$ and this proves our claim because $c_{N}=z$.
\end{proof}
\begin{rem}
~
\begin{enumerate}
\item The Dirac $\delta$ can be extended $0\ra1$, but it cannot be $1\ra0$.
Note that $\delta\mid_{B_{r}(0)}\neq0$ whereas $\delta\mid_{B_{s}(1)}=0$,
so \ref{enu:identity2} cannot hold. Similarly, the Heaviside function
on $\RCcomplexrho$, see \cite{NuGi24b}, cannot be continued $1\ra0$
but it can be continued $0\ra1$.
\item The continuation assumption is invoked exclusively in Case II. Since
we used induction, it is important that in this case $N\in\N$ is
a standard natural number.
\end{enumerate}
\end{rem}

\section{A Paley-Wiener type theorem for the hyperfinite Fourier transform}

In this last section, we generalize a Paley-Wiener type theorem to
the framework of GSF and GHF. The classical Paley-Wiener theorems
make use of the holomorphic Fourier transform on classes of square-integrable
functions supported on the real line, and relates decay properties
of a function or distribution at infinity (such as being compactly
supported) with analyticity of its Fourier transform (FT). On the
contrary to the classical setting, we will see that the hyperfinite
Fourier transform (HFT) of \emph{any} GSF $f$ is always a GHF, without
assuming any compactness support of $f$. Moreover, the HFT is always
an entire GHF if the domain of integration is of logarithmic type.

The basic idea to define a very general FT for GSF is the following,
see \cite{MTG}: Since GSF are based on a non-Archimedean ring of
scalars, we can consider a positive \emph{infinite} generalized number
$h$ and define the FT with the usual formula, but integrating over
the interval $[-h,h]$. We know that, although $h$ is an infinite
number (hence, $[-h,h]\supseteq\R$!), this interval behaves like
a compact set for GSF, so that, e.g., on these integration domains
we always have an extreme value theorem and integrals always exist.
Clearly, this leads to a FT, called \emph{hyperfinite} FT, that depends
on the parameter $h$, but, on the other hand, where we can transform
\emph{all} the GSF defined on this interval, and these include all
tempered Schwartz distributions, all tempered Colombeau GF, but also
a large class of non-tempered GF, such as the exponential functions,
or non-linear examples like $\delta^{a}\circ\delta^{b}$, $\delta^{a}\circ H^{b}$,
$a$, $b\in\N$, etc. Not all the properties of the classical FT remain
unchanged for this more general transform, but the final formalism
still retains the useful properties of the FT in dealing with differential
equations. Even more, the new formula for the transform of derivatives
leads us to include also exponential solutions in the Fourier method
applied to the simplest ODE: $y'=y$, $y(0)=1$, see \cite{MTG}.

We start by first recalling the notion of compactly supported GSF,
because we will use it to fully develop the example of the HFT of
Dirac delta. In the generalized functions framework, compactly supported
GSF were introduced in \cite{MTG} for an arbitrary gauge, and in
\cite{Giordano2018} for the gauge $\rho_{\eps}=\eps$.
\begin{defn}
\label{def:compactlysupportedGSF}Assume that $X\subseteq\RCrealrho$
and $f\in\gsf\left(X,\RCcomplexrho\right)$, then
\begin{enumerate}
\item \label{enu:support}${\rm supp}\left(f\right):=\overline{\left\{ x\in X\mid\left|f(x)\right|>0\right\} }$,
where $\overline{\left(\cdot\right)}$ denotes the relative closure
in $X$ with respect to the sharp topology, is called \textit{the
support of} $f$. Note that ${\rm supp}\left(f\right)$ is clearly
always sharply closed. However, in general it is not an internal set,
so it is not a functionally compact set.
\item \label{enu:exterior}For $A\subseteq\RCrealrho$, we call the set
${\rm ext}\left(A\right):=\left\{ x\in\RCrealrho\mid\forall a\in A:\left|x-a\right|>0\right\} $
the strong exterior of $A$. For example, using the generalized Jordan
curve theorem \cite[Thm.~40]{NuGi24b}, it is easy to prove that if
$\gamma\in\gsfk{1}([a,b])$ is closed, simple and positively oriented
path, then ${\rm ext}\left(\overline{I}_{\gamma}\right)=E_{\gamma}$.
\item \label{enu:compactlysupportedfunction}Let $H\Subset_{{\rm f}}\RCrealrho$
be a functionally compact set, we say that $f$ is \emph{compactly
supported in} $H$, and write $f\in\Dgsf\left(H,\RCcomplexrho\right)$,
if $f\in\gsf\left(\RCrealrho,\RCcomplexrho\right)$ and ${\rm supp}(f)\subseteq H$.
We say that $f\in\Dgsf\left(\RCrealrho,\RCcomplexrho\right)$, if
${\rm supp}\left(f\right)\subseteq H$ for some functionally compact
set $H\Subset_{{\rm f}}\RCrealrho$. Since $H\Subset_{{\rm f}}\RCrealrho$,
then $f$ is also sharply bounded by the extreme value Thm~6 of \cite{NuGi24b}.
\end{enumerate}
\end{defn}

\begin{thm}
\label{thm:supportexterior}Let $f\in\gsf\left(\RCrealrho,\RCcomplexrho\right)$
and $\emptyset\neq H\Subset_{{\rm f}}\RCrealrho$. Then,
\begin{enumerate}
\item \label{enu:extopen}${\rm ext}(H)$ is sharply open, and
\item \label{enu:supportiffext}$f\in\Dgsf\left(H,\RCcomplexrho\right)$
if and only if $f\mid_{{\rm ext}(H)}=0$.
\item \label{enu:derivative}If $f\in\Dgsf\left(H,\RCcomplexrho\right)$
then $f^{(n)}(x)=0$ for all $x\in{\rm ext}(H)$ and $n\in\N$. In
particular, if $H\subseteq[-h,h]$, $h>0$, then $f^{(n)}(x)=0$ whenever
$x\in(-\infty,-h]\vee[h,+\infty)$ (see \cite[Def.~35]{NuGi24b} for
the definition of $\vee$).
\end{enumerate}
\end{thm}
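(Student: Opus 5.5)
We prove the three claims in order; the main tools are the characterization of the sharp topology through strict inequalities with invertible radii, and the fact that $H$ is functionally compact, hence internal and sharply closed.

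For \ref{enu:extopen}, fix $x\in{\rm ext}(H)$. The function $a\in H\mapsto|x-a|\in\RCrealrho$ is a GSF on the functionally compact set $H$. By the extreme value theorem for GSF (Thm.~6 of \cite{NuGi24b}) it attains a minimum $m=|x-\bar a|$ at some $\bar a\in H$. Since $x\in{\rm ext}(H)$, we have $m>0$, so $m$ is invertible and $B_{m/2}(x)$ is a sharp neighborhood of $x$. For $y\in B_{m/2}(x)$ and every $a\in H$, the triangle inequality gives $|y-a|\ge|x-a|-|y-x|>m-m/2=m/2>0$. Hence $B_{m/2}(x)\subseteq{\rm ext}(H)$.

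For \ref{enu:supportiffext}, first let ${\rm supp}(f)\subseteq H$ and $x\in{\rm ext}(H)$. Suppose $f(x)\neq0$. Then there is $L\subseteq_{0}I$ with $|f(x)|>_{L}0$. I will use the language of subpoints from \cite{NuGi24a} to reduce to the case where $|f(x)|>0$ holds globally, working on $L$ and using that all the notions involved are stable under restriction to subpoints. By continuity, $|f|>0$ on a small ball around $x$. That ball meets the set $\{|f|>0\}$, so $x\in{\rm supp}(f)\subseteq H$, which contradicts $|x-x|=0$. Thus $f(x)=0$. I expect this subpoint bookkeeping to be the main obstacle, because $f(x)\ne0$ does not mean that $f(x)$ is invertible. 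A cleaner alternative is to show directly that $x\notin\overline{\{|f|>0\}}$: on the ball $B_{m/2}(x)\subseteq{\rm ext}(H)$ from the first part, every $y$ with $|f(y)|>0$ would lie in $H$, and that is impossible. Either way, $f(x)=0$ follows once we know that ${\rm ext}(H)\cap H=\emptyset$ and that ${\rm ext}(H)$ is open. Conversely, assume $f|_{{\rm ext}(H)}=0$. Then $\{|f|>0\}\cap{\rm ext}(H)=\emptyset$, so the set $\{|f|>0\}$ lies in the complement of ${\rm ext}(H)$, which is sharply closed by the first part. It remains to show that this complement essentially equals $H$. For that I would take $x\notin H$ and use that $H$ is internal and functionally compact, so $d(x,H)=\min_{a\in H}|x-a|$ exists. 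I would then show, via the subpoint decomposition, that either $x\in{\rm ext}(H)$ or $x$ lies in the closure of $H$. This step would again be delicate, and I would handle it by proving that $\{|f|>0\}\subseteq H$ pointwise: if $|f(y)|>0$ then $y\notin{\rm ext}(H)$, so $\min_a|y-a|$ is not invertible. Combining this with $|f(y)|>0$ and continuity of $f$ then forces $\min_a|y-a|=0$ on all subpoints, hence $y\in H$. Since $H$ is closed, the closure satisfies ${\rm supp}(f)\subseteq H$.

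For \ref{enu:derivative}, the restriction $f|_{{\rm ext}(H)}$ vanishes by \ref{enu:supportiffext}, and ${\rm ext}(H)$ is sharply open by \ref{enu:extopen}. The derivative of a GSF at an interior point is determined by its values on a neighborhood, through the generalized Fermat–Reyes or limit definition of the derivative. Hence $f'=0$ on ${\rm ext}(H)$, and by induction $f^{(n)}=0$ there. If $H\subseteq[-h,h]$, every $x$ with $x<-h$ or $x>h$ (strictly) belongs to ${\rm ext}(H)$. For the endpoints included in $(-\infty,-h]\vee[h,+\infty)$, I would use sharp continuity of $f^{(n)}$, approximating $x$ by points strictly outside $[-h,h]$, which gives $f^{(n)}(x)=0$ there as well.
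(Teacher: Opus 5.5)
Your argument for (i) is fine after one small repair: $a\mapsto|x-a|$ is not a GSF, because the modulus is not smooth at $0$. Apply the extreme value theorem to $a\mapsto(x-a)^{2}$ instead, or compute $\min_{a\in H}|x-a|=[d(x_{\eps},H_{\eps})]$ directly.

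The genuine gap is the implication ${\rm supp}(f)\subseteq H\Rightarrow f|_{{\rm ext}(H)}=0$ in (ii), and neither of your routes closes it. In the first route, knowing $|f(y)|>_{L}0$ for $y$ near $x$ does not put $y$ into $\{y\mid|f(y)|>0\}$. That set, and hence ${\rm supp}(f)$, only records points where $f$ is invertible on all of $I$, so the support is not stable under restriction to subpoints. In the second route you only obtain ${\rm ext}(H)\cap{\rm supp}(f)=\emptyset$, i.e.\ $f$ is nowhere invertible on ${\rm ext}(H)$. Since $\RCcomplexrho$ has nonzero non-invertible elements, this does not give $f(x)=0$.

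In fact, with the support as defined in the paper, the implication is false. Take $L\subseteq_{0}I$ with $I\setminus L\subseteq_{0}I$, take $\varphi\in\mathcal{D}(\R)$ with ${\rm supp}(\varphi)\subseteq[2,3]$ and $\varphi(5/2)=1$, and set $f:=[\chi_{L}(\eps)\varphi(-)]\in\gsf(\RCrealrho,\RCcomplexrho)$.
\begin{itemize}
\item For every $x$, $f(x)$ is represented by $(\chi_{L}(\eps)\varphi(x_{\eps}))$, which vanishes for $\eps\in I\setminus L$. So $|f(x)|$ is never invertible, and ${\rm supp}(f)=\emptyset\subseteq H:=[0,1]$.
\item Yet $5/2\in{\rm ext}(H)$ and $f(5/2)=[\chi_{L}(\eps)]\neq0$.
\end{itemize}
The same $f$ also violates (iii), with $n=0$, or with $n\geq1$ at a point of $(2,3)$ where $\varphi^{(n)}\neq0$. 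Your argument for (iii) is correct once $f=0$ on ${\rm ext}(H)$ is known, so it inherits the problem. A correct statement needs a subpoint-stable notion of being supported in $H$. One example: require that $|f(x)|>_{L}0$ implies $x=_{L}a$ for some $a\in H$. With such a notion, both directions go through by the argument below.

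The converse in (ii) is true, but the tool that proves it is gluing of representatives, not continuity of $f$. Suppose $|f(y)|>0$ and $m:=\min_{a\in H}|y-a|\neq0$. Then $m>_{L}0$ for some $L\subseteq_{0}I$. Choose $R$ with $H\subseteq B_{R}(0)$, and set $y'_{\eps}:=y_{\eps}$ for $\eps\in L$ and $y'_{\eps}:=R_{\eps}+1$ otherwise.
\begin{itemize}
\item Then $y'\in{\rm ext}(H)$, so $f(y')=0$.
\item On the other hand $f(y')=_{L}f(y)$, which is invertible on $L$. This is a contradiction.
\end{itemize}
Hence $m=0$, so $y$ equals the minimizer in $H$, and ${\rm supp}(f)\subseteq\overline{H}=H$. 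The same gluing, using $x\mp\diff\rho^{k}$ on the two complementary parts of $I$, is what justifies your approximation of points of $(-\infty,-h]\vee[h,+\infty)$ in (iii).
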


\begin{proof}
See \cite[Lem.~3.3]{MTG} and \cite[Thm.~3.4]{MTG}.
\end{proof}
\begin{example}
~
\begin{enumerate}
\item Since the $1$-dimensional Dirac delta $\delta_{1}\in\gsf\left(\RCrealrho,\RCcomplexrho\right)$
and $\delta_{1}(0)>0$, then $\delta_{1}\mid_{B_{s}(0)}>0$ for some
$s\in\RCrealrho_{>0}$ by the sharp continuity of $\delta_{1}$. Hence
$B_{s}(0)\subseteq{\rm supp}\left(\delta_{1}\right)\subseteq[-r,r]$,
for all $r\in\R_{>0}$.
\item Consider the GSF corresponding to the Gaussian $f(x):=e^{-\frac{|x|^{2}}{2}}$,
for all $x\in\RCrealrho$. Then $f\in\gsf\left(\RCrealrho,\RCcomplexrho\right)$
and it is not hard to prove that $0\leq f(x)\leq\left|x\right|^{-q}$
for all $q\in\N$ and for $|x|$ finite sufficiently large. Therefore,
for any strongly infinite numbers $h\in\RCrealrho_{>0}$, we have
$f(x)=0$ for all $x\in{\rm ext}\left(\left[-h,h\right]\right)$.
Hence, ${\rm supp}(f)\subseteq[-h,h]$.
\end{enumerate}
\end{example}

We now define the hyperfinite Fourier transform (HFT). For more details
and proofs about HFT, the reader can refer to \cite{MTG}.
\begin{defn}
\label{def:Fouriertransform}Let $h\in\RCrealrho_{>0}$ be a positive
infinite number, $H:=[-h,h]\subseteq\RCrealrho$, and $f\in\gsf(H,\RCcomplexrho)$.
We define \textit{the hyperfinite Fourier transform (HFT) $\mathcal{F}_{h}(f)$
of $f$ on} $H$ as follows: 
\begin{equation}
\mathcal{F}_{h}(f)(\omega):=\int_{-h}^{h}f(x)e^{-ix\omega}\,\diff x,\quad\forall\omega\in\RCcomplexrho.\label{eq:HFT-1}
\end{equation}
\end{defn}

The term \textit{hyperfinite} captures a deliberate duality: while
$h\in\RCrealrho$ is an infinite number, generalized smooth functions
treat functionally compact sets like $H$ as though they were finite
with compact structures. Thus, \textit{hyperfinite} reflects an entity
that is infinite in scale but bounded in several points of view.

Clearly, $\mathcal{F}_{h}:\gsf(H,\RCcomplexrho)\ra\gsf(\RCcomplexrho,\RCcomplexrho)$
and for all $f\in\gsf(H,\RCcomplexrho)$ and $\omega\in\RCrealrho$
we have 
\begin{equation}
\left|\mathcal{F}_{h}(f)(\omega)\right|\leq\int_{H}|f(x)|\,\diff x,\label{eq:HFT}
\end{equation}
so that the HFT is always sharply bounded on $\RCrealrho$. We also
note that if $h=[h_{\eps}]$, then
\[
\mathcal{F}_{h}\left(f\right)\left(\omega\right)=\left[\intop_{-h_{\eps}}^{h_{\eps}}f_{\eps}\left(x\right)e^{-ix\cdotp\omega_{\eps}}\,\diff{x}\right]=\left[\hat{\mathcal{F}}(\chi_{H_{\eps}}f_{\eps})(\omega_{\eps})\right]\in\RCcomplexrho,
\]
where $\hat{\mathcal{F}}:\mathcal{S}(\R)\ra\mathcal{S}(\R)$ is the
classical FT, and $\chi_{H_{\eps}}$ is the characteristic function
of $H_{\eps}:=[-h_{\eps},h_{\eps}]$.

We naturally define the inverse HFT as follows:
\begin{defn}
\label{def:inverseFT}Let $h\in\RCrealrho_{>0}$ be a positive infinite
number, $H:=[-h,h]\Subset_{{\rm f}}\RCrealrho$, and $f\in\gsf(H,\RCcomplexrho)$.
The inverse HFT is 
\[
\mathcal{F}_{h}^{-1}(f)(x):=\frac{1}{2\pi}\int_{-h}^{h}f(\omega)e^{ix\omega}\,\diff\omega\quad\forall x\in\RCcomplexrho.
\]
\end{defn}

Similarly, $\mathcal{F}_{h}^{-1}:\gsf(H,\RCcomplexrho)\ra\gsf(\RCcomplexrho,\RCcomplexrho)$.
We should clarify that the notation $\mathcal{F}_{h}^{-1}$ is an
abuse of notation, as the codomain of $\mathcal{F}_{h}$ does not
coincide with the domain of $\mathcal{F}_{h}^{-1}$ (and vice-versa).
When dealing with inversion properties, it is therefore preferable
to consider that 
\begin{align*}
\mathcal{F}_{h}\mid_{H} & :=\left(-\right)\mid_{H}\circ\mathcal{F}_{h}:\gsf(H,\RCcomplexrho)\ra\gsf(H,\RCcomplexrho)\\
\mathcal{F}_{h}^{-1}\mid_{H} & :=\left(-\right)\mid_{H}\circ\mathcal{F}_{h}^{-1}:\gsf(H,\RCcomplexrho)\ra\gsf(H,\RCcomplexrho).
\end{align*}

For this HFT, several properties can be extended, see \cite{MTG}.
The main differences concern the HFT of a derivative
\begin{align}
\mathcal{F}_{h}\left(f'\right) & =i\omega\mathcal{F}_{h}\left(f\right)+\Delta_{1h}f\label{eq:HFTDer}\\
\Delta_{1h}f(\omega): & =\left[f(x)e^{-ix\cdot\omega}\right]_{x=-h}^{x=h}.\nonumber 
\end{align}
and the Fourier inversion theorem
\[
\lim_{k\to+\infty}\mathcal{F}_{k}^{-1}\left(\mathcal{F}_{h}(f)\right)(y)=\lim_{k\to+\infty}\mathcal{F}_{k}\left(\mathcal{F}_{h}^{-1}(f)\right)(y)=f(y)\quad\forall y\in\mathring{H}.
\]
We also finally recall that the $\Delta_{1h}f$ term appearing in
\eqref{eq:HFTDer} is essential e.g.~to obtain non trivial solutions
of differential equations, such as the exponential one of the simplest
ODE: $y'=y$, $y(0)=1$ (this example also emphasizes the importance
to consider the HFT $\mathcal{F}_{h}(y)$ for any GSF $y$, not necessarily
of tempered type).

The following result represents the Riemann-Lebesgue lemma in our
framework. It immediately highlights an important difference with
respect to the classical approach since it states that the restriction
to $\RCrealrho$ of the HFT of a very large class of compactly supported
GSF is still compactly supported.
\begin{lem}[Riemann-Lebesgue]
\label{lem:RiemannLebesgue}Let $H=[H_{\eps}]\Subset_{{\rm f}}\RCrealrho$
and $f\in\Dgsf\left(H,\RCcomplexrho\right)$ be a compactly supported
GSF. Assume that $f$ is uniformly bounded by a tame polynomial, i.e.\ref{eq:tamepolynom}
\begin{equation}
\exists C,b\in\RCrealrho_{>0}\,\forall x\in H\,\forall j\in\N:\left|f^{(j)}(x)\right|\leq C\cdot b^{j}.\label{eq:tamepolynom}
\end{equation}
For all $N\in\N$ and $\omega\in\RCrealrho$, if $\omega$ is invertible,
then
\begin{equation}
\left|\mathcal{F}_{h}(f)(\omega)\right|\leq\frac{1}{|\omega|^{N}}\cdot\int_{H}\left|f^{(N)}(x)\right|\,\diff x.\label{eq:HFTbounded}
\end{equation}
Therefore, $\lim_{\omega\rightarrow+\infty}\left|\mathcal{F}_{h}(f)(\omega)\right|=0$
(in the sharp topology). Actually, \eqref{eq:HFTbounded} yields the
stronger result:
\begin{equation}
\exists Q\in\N:\mathcal{F}_{h}(f)|_{\RCrealrho}\in\Dgsf\left(\overline{B_{\diff\rho^{-Q}}(0)},\RCcomplexrho\right).\label{eq:supportHFT}
\end{equation}
\end{lem}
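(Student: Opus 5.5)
The plan is the classical one: integrate by parts $N$ times, use the compact support to kill the boundary terms, and then exploit the tame bound \eqref{eq:tamepolynom} to choose $N$ depending on $\omega$.

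\textbf{Step 1: the bound \eqref{eq:HFTbounded}.} Take $h$ so that $H\subseteq[-h,h]$, with $h$ large enough that $\pm h\in{\rm ext}(H)$. By Thm.~\ref{thm:supportexterior}\ref{enu:derivative}, $f^{(j)}(\pm h)=0$ for every $j\in\N$. Formula \eqref{eq:HFTDer} then gives $\Delta_{1h}f^{(j)}=0$, hence $\mathcal{F}_{h}(f^{(j+1)})=i\omega\mathcal{F}_{h}(f^{(j)})$. Inducting on $N\in\N$, which is legitimate since $N$ is standard, we get $(i\omega)^{N}\mathcal{F}_{h}(f)(\omega)=\mathcal{F}_{h}(f^{(N)})(\omega)$. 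For invertible real $\omega$ we divide by $\omega^{N}$ and apply \eqref{eq:HFT} to $f^{(N)}$; this is exactly \eqref{eq:HFTbounded}.

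\textbf{Step 2: the limit.} The integral $\int_{H}|f^{(N)}|$ is a fixed generalized number. Taking $N=1$, the right-hand side of \eqref{eq:HFTbounded} tends to $0$ as $\omega\to+\infty$ in the sharp topology, which gives $\lim_{\omega\rightarrow+\infty}\left|\mathcal{F}_{h}(f)(\omega)\right|=0$.

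\textbf{Step 3: \eqref{eq:supportHFT}.} This is the main obstacle, because a fixed standard $N$ only gives decay like $|\omega|^{-N}$, not vanishing. I would work at the $\eps$-level, where the argument of Step 1 holds for every $N_{\eps}\in\N$ with representatives $f_{\eps}$, $C_{\eps}$, $b_{\eps}$. Integrating by parts $N_{\eps}$ times gives
\[
|\mathcal{F}_{h}(f)(\omega)_{\eps}|\le\frac{2h_{\eps}C_{\eps}b_{\eps}^{N_{\eps}}}{|\omega_{\eps}|^{N_{\eps}}}.
\]
The uniformity in $j$ of \eqref{eq:tamepolynom} is essential here: it must be read as a bound for $\eps$ small that holds uniformly for all $j$. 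This needs a justification via representatives, namely that the generalized inequality for all $j\in\N$ yields an $\eps$-wise inequality up to a negligible term; that is the point I expect to be delicate.

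Choose $Q$ so that $2b<\diff\rho^{-Q}$. If $|\omega|\ge\diff\rho^{-Q}$ with $\omega$ strongly outside the ball, then $b_{\eps}/|\omega_{\eps}|\le\frac12$. Taking $N_{\eps}:={\rm int}(\rho_{\eps}^{-1/\eps})$, or any $N_{\eps}\to\infty$ with $2^{-N_{\eps}}$ negligible (for example $N_{\eps}=\lceil\rho_{\eps}^{-1}\rceil$, which gives $2^{-\rho_{\eps}^{-1}}\le\rho_{\eps}^{q}$ for all $q$), makes the $\eps$-level bound negligible. Hence $\mathcal{F}_{h}(f)(\omega)=0$ on ${\rm ext}\bigl(\overline{B_{\diff\rho^{-Q}}(0)}\bigr)\cap\RCrealrho$.

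Finally, $\mathcal{F}_{h}(f)|_{\RCrealrho}$ is a GSF. By Thm.~\ref{thm:supportexterior}\ref{enu:supportiffext}, applied to the functionally compact set $\overline{B_{\diff\rho^{-Q}}(0)}$, we conclude ${\rm supp}\subseteq\overline{B_{\diff\rho^{-Q}}(0)}$. The case of $\omega$ not strongly outside the ball but lying only on a subpoint is handled by the same estimate restricted to that subpoint.
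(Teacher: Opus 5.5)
Your Steps 1 and 2 are fine. Step 3, however, has a genuine gap, exactly at the point you flag as delicate, and the proposal leaves it unresolved.

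Integrating by parts $N_\eps\to\infty$ times at the $\eps$-level requires two things. First, you need an estimate $|f_\eps^{(j)}(x)|\le C_\eps b_\eps^{j}$ valid for all $x$ in the integration interval and all $j\le N_\eps$ simultaneously, for $\eps\le\eps_0$. Second, the $\eps$-level boundary terms involving $f_\eps^{(j)}(\pm h_\eps)$, $j<N_\eps$, must have negligible sum; your displayed $\eps$-wise inequality silently drops them. The hypotheses give \eqref{eq:tamepolynom} and $f^{(j)}(\pm h)=0$ only as relations in $\RCrealrho$ for each standard $j$. So both the threshold $\eps_0$ and the negligible corrections depend on $j$, and nothing makes them uniform. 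For instance, add $\rho_\eps^{1/\eps}g$ to a representative, with $g\in\mathcal{D}(\R)$ whose derivatives grow fast enough in $j$. This does not change $f$, but it breaks any bound of the form $|f_\eps^{(N_\eps)}|\le C_\eps b_\eps^{N_\eps}$ with $N_\eps\approx\rho_\eps^{-1}$. You would have to construct a special representative with these uniform properties, and the proposal does not.

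The missing idea is that no $\eps$-level argument and no non-standard $N$ are needed. Your remark that a standard $N$ only gives decay like $|\omega|^{-N}$ overlooks that you may use \emph{all} $N\in\N$ at once, provided the ratio $b/|\omega|$ is infinitesimal rather than merely $\le\frac12$. Choose $R\in\N$ with $b\le\diff\rho^{-R}$ and set $Q:=R+1$, which is the paper's choice. For $\omega\in{\rm ext}\bigl(\overline{B_{\diff\rho^{-Q}}(0)}\bigr)$ you get $|\omega|\ge\diff\rho^{-Q}$. Then \eqref{eq:HFTbounded} and \eqref{eq:tamepolynom} give, for every standard $N$ and with $\lambda(H)$ the generalized measure of $H$,
\[
|\mathcal{F}_h(f)(\omega)|\le\diff\rho^{NQ}\,C\,b^{N}\lambda(H)\le C\lambda(H)\,\diff\rho^{N}.
\]
Since $C\lambda(H)$ is moderate, the right-hand side is below $\diff\rho^{q}$ for every $q\in\N$ once $N$ is large, so $\mathcal{F}_h(f)(\omega)=0$. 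Thm.~\ref{thm:supportexterior}\ref{enu:supportiffext} then concludes, as in your last paragraph. Your choice $2b<\diff\rho^{-Q}$ only bounds the ratio by $\frac12$, and that is exactly what pushed you toward $N_\eps\to\infty$ and the unjustified $\eps$-level estimate.
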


\begin{proof}
For the first part of the proof, see \cite[Lem.~5.1]{MTG}. Here,
we only prove \eqref{eq:supportHFT}. We first recall that $\overline{B_{\diff\rho^{-Q}}(0)}\Subset_{{\rm f}}\RCrealrho$.
Let $C$, $b\in\RCrealrho_{>0}$ from \eqref{eq:tamepolynom} and
\[
\lambda(H):=\lim_{\substack{m\rightarrow+\infty\\
m\in\N
}
}\left[\lambda\left(\overline{\Eball}_{\rho_{\eps}^{m}}\left(H_{\eps}\right)\right)\right]\in\RCrealrho,
\]
where $\lambda$ is the Lebesgue measure. Therefore, $b\leq\diff\rho^{-R}$
for some $R\in\N$, and we can set $Q:=R+1$. We want to prove the
claim using Thm. \ref{thm:supportexterior}\ref{enu:supportiffext},
so that we take $\omega\in{\rm ext}\left(\overline{B_{\diff\rho^{-Q}}(0)}\right)$.
It cannot be $|\omega|<_{{\rm s}}\diff\rho^{-Q}$ because this would
yields $\left|\omega-a\right|=_{{\rm s}}0$ for some $a\in\overline{B_{\diff\rho^{-Q}}(0)}$;
consequently, $|\omega|\geq\diff\rho^{-Q}$. We apply assumption \eqref{eq:tamepolynom}
and inequality \eqref{eq:HFTbounded} with an arbitrary $N\in\N$
to get
\begin{align*}
\left|\mathcal{F}_{h}(f)(\omega)\right| & \leq\frac{1}{|\omega|^{N}}\cdot\int_{H}\left|f^{(N)}(x)\right|\,\diff x\\
 & \leq\diff\rho^{NQ}Cb^{N}\lambda(H)\\
 & \leq C\diff\rho^{N(Q-R)}\lambda(H)=C\diff\rho^{N}\lambda(H).
\end{align*}
For $N\rightarrow+\infty$, we hence have $\mathcal{F}_{h}(f)(\omega)=0$
if $\omega\in\RCrealrho$.
\end{proof}
We close this section with a Paley-Wiener type theorem. We again underscore
that this result states that the HFT $\mathcal{F}_{h}(f)$ of \emph{any}
GSF $f\in\gsf(H,\RCcomplexrho)$ is always an entire GHF without assuming
that $f$ is compactly supported. As it will be clear from the proof,
this is a consequence of the functionally compact integration domain
$H$, even if $H\supseteq\R$. In the end, it is a consequence of
having a good non-Archimedean setting with infinite numbers.
\begin{thm}[Paley-Wiener]
\label{thm:paleywiener}Let $h\in\RCrealrho_{>0}$, set $H:=[-h,h]$,
and let $f\in\gsf(H,\RCcomplexrho)$. Then the HFT $\mathcal{F}_{h}(f)$
satisfies the following properties:
\begin{enumerate}
\item \label{enu:HFT_GHF}$\mathcal{F}_{h}(f)\in\ghf(\RCcomplexrho,\RCcomplexrho)$
is a GHF.
\item \label{enu:HFTEntire}If $h$ is of logarithmic type, i.e.~$h\le Q\log\diff\rho$,
for some $Q\in\R_{>0}$, then $\mathcal{F}_{h}(f)$ is an entire GHF.
\item \label{enu:HFTIntegrabile}It is integrabile on the real axis: $\int_{H}\left|\mathcal{F}_{h}(f)\mid_{\RCrealrho}(\omega)\right|^{2}\diff\omega=2\pi\int_{H}\left|f\right|^{2}\in\RCrealrho$
(Plancherel).
\item \label{enu:HFTExp}It is of exponential type: There exists a constant
$C\in\RCrealrho_{>0}$ and for all $z\in\RCcomplexrho$:
\[
\left|\mathcal{F}_{h}(f)(z)\right|\leq Ce^{h|z|}.
\]
\end{enumerate}
\end{thm}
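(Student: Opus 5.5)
I will treat the four claims separately, working at the level of representatives $f=[f_{\eps}(-)]$, $h=[h_{\eps}]$, so that $\mathcal{F}_{h}(f)(z)=\left[v_{\eps}(z_{\eps})\right]$ with $v_{\eps}(z):=\int_{-h_{\eps}}^{h_{\eps}}f_{\eps}(x)e^{-ixz}\,\diff x$. Each $v_{\eps}$ is an entire function, since we integrate an entire function of $z$ over a compact interval; differentiation under the integral sign gives $v_{\eps}'(z)=-i\int_{-h_{\eps}}^{h_{\eps}}xf_{\eps}(x)e^{-ixz}\,\diff x$.

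For \ref{enu:HFT_GHF}, by \cite[Thm.~26]{NuGi24a} (the same criterion used in Thm.~\ref{thm:analyticthenGHF}) it suffices to show that $(v_{\eps}(z_{\eps}))$ and $(v'_{\eps}(z_{\eps}))$ are $\rho$-moderate for every moderate $(z_{\eps})$. I will bound them by $2h_{\eps}^{2}\sup_{H_{\eps}}|f_{\eps}|\,e^{h_{\eps}|z_{\eps}|}$. Here $\sup|f_{\eps}|$ is moderate by the extreme value theorem on the functionally compact set $H$.

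The main obstacle is the factor $e^{h_{\eps}|\mathrm{Im}\,z_{\eps}|}$. If $h$ is infinite and $z$ is merely moderate, this factor need not be moderate. So I expect \ref{enu:HFT_GHF} in full generality to require restricting to $z$ with $h|\mathrm{Im}\,z|$ of logarithmic type, or else reading the domain as the largest set where moderateness holds. I would try first to show that this set is sharply open and that, on it, the GHF conditions hold by the bound above. For $z$ real, the exponential is $1$ and the claim is immediate.

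For \ref{enu:HFTEntire} I will expand $e^{-ixz}=\sum_{n}\frac{(-ix)^{n}z^{n}}{n!}$ and exchange sum and integral $\eps$-wise; this is legitimate by uniform convergence on $[-h_{\eps},h_{\eps}]$. This gives $v_{\eps}(z)=\sum_{n}a_{n\eps}z^{n}$ with $a_{n\eps}=\frac{(-i)^{n}}{n!}\int_{-h_{\eps}}^{h_{\eps}}x^{n}f_{\eps}(x)\,\diff x$. Then $|a_{n\eps}|\le 2h_{\eps}^{n+1}\sup|f_{\eps}|/n!$, so the coefficients are weakly moderate (Def.~\ref{def:radiusofconvergence}) and the radius is $+\infty$. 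The conditions of Def.~\ref{def:setofconvergence} then reduce, as in Example~\ref{exa:exponential}, to estimates of the form $\sum_{n}\frac{(h_{\eps}|z_{\eps}|)^{n}}{n!}=e^{h_{\eps}|z_{\eps}|}$, and tails are controlled via Stirling as in Example~\ref{exa:diracdelta}. The logarithmic-type hypothesis on $h$ is what keeps these quantities moderate on the admissible set of $z$, so that $f$ is entire at $0$ in the sense of the definition.

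For \ref{enu:HFTExp}, the bound $|v_{\eps}(z)|\le\int|f_{\eps}|\,e^{h_{\eps}|z|}$ gives $C:=\int_{H}|f|$. For \ref{enu:HFTIntegrabile}, I will apply the classical Plancherel theorem $\eps$-wise to $\chi_{H_{\eps}}f_{\eps}\in L^{2}$, which gives $\int_{\R}|v_{\eps}|^{2}=2\pi\int_{H_{\eps}}|f_{\eps}|^{2}$. The stated integral over $H$ is then a truncation of this; to obtain equality I would need the tail outside $H_{\eps}$ to be negligible, which I would try to derive from Lemma~\ref{lem:RiemannLebesgue}. Otherwise I would settle for the inequality, which still gives a moderate bound.
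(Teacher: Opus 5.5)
Your obstruction is genuine, and it defeats the statement itself, not only your route. Take $f=1$ and $z=i\diff\rho^{-k}$. Then $\mathcal{F}_h(1)(z)$ is generated by $2\rho_{\eps}^{k}\sinh(h_{\eps}\rho_{\eps}^{-k})$, which is not $\rho$-moderate once $h\ge\diff\rho^{k-1}$. Every $h\in\RCrealrho_{>0}$ satisfies $h\ge\diff\rho^{m}$ for some $m\in\N$, so this happens for \emph{every} $h$, finite ones included: the culprit is an infinite $\mathrm{Im}\,z$, not only an infinite $h$.

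Hence $\mathcal{F}_h(f)$ is in general not even a map $\RCcomplexrho\to\RCcomplexrho$. Claims (i), (ii) and (iv) can only hold on a domain such as your $U=\{z:\exists K\in\N:\ h|\mathrm{Im}\,z|<-K\log\diff\rho\}$. This set is sharply open, and on it your bounds together with \cite[Thm.~26]{NuGi24a} do give a GHF.

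The paper argues (i) differently, via Fubini, Cauchy--Goursat for $z\mapsto e^{-iz\omega}$, and Morera's theorem. That argument rests on the asserted mapping property $\mathcal{F}_h:\gsf(H,\RCcomplexrho)\to\gsf(\RCcomplexrho,\RCcomplexrho)$, which is exactly what fails for non-real arguments. Your route instead checks the moderateness that is actually needed.

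For (ii), the paper integrates the exponential hyperseries of Example~\ref{exa:exponential} termwise. That requires $|z\omega|$ of logarithmic type, i.e.\ $z$ finite when $h$ is of logarithmic type. A generalized entire function must have set of convergence equal to all of $\RCcomplexrho$, so you cannot conclude ``entire at $0$''. What your bound $|a_{n\eps}|\le 2h_\eps^{n+1}\sup|f_\eps|/n!$ does prove is that the HPS represents $\mathcal{F}_h(f)$ wherever $h|z|$ is of logarithmic type. This holds for every moderate $h$; the logarithmic-type hypothesis only ensures that this region contains all finite $z$.

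For (iii), the step you propose fails: Lemma~\ref{lem:RiemannLebesgue} needs $f$ compactly supported in $H$ and the tame bound \eqref{eq:tamepolynom}, and neither is assumed. More seriously, equality over $H$ is false in general. For $f=1$ one has, $\eps$-wise, $\mathcal{F}_h(1)(\omega)=2\sin(h\omega)/\omega$ on the reals, and
\[
\int_{-h}^{h}\left|\mathcal{F}_h(1)(\omega)\right|^{2}\diff\omega=4h\int_{-h^{2}}^{h^{2}}\frac{\sin^{2}t}{t^{2}}\,\diff t=4\pi h-\frac{4}{h}+O(h^{-3}),
\]
while $2\pi\int_H 1=4\pi h$. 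For $h=\diff\rho^{-1}$ the defect $\approx4\diff\rho$ is not negligible. So your $\eps$-wise Plancherel on $\R$, with the resulting inequality $\int_H|\mathcal{F}_h(f)|^2\le2\pi\int_H|f|^2$, is the correct conclusion. Equality needs $\int_{-k}^{k}$ with $k\to+\infty$; the paper simply cites \cite[Thm.~7.11]{MTG} here.

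For (iv), your direct bound $|e^{-ixz}|\le e^{h|z|}$ on $H$, giving $C=\int_H|f|$, is simpler than the paper's Cauchy--Schwarz route, which yields $C=(2h)^{1/2}\bigl(\int_H|f|^2\bigr)^{1/2}$. In either case replace $C$ by $C+1$ to get an invertible constant. The inequality is meaningful only where $e^{h|z|}$ is moderate.
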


\begin{proof}
Set $\gamma_{s}$ be the circle of radius $s\in\RCrealrho_{>0}$ and
centered at $z\in\RCcomplexrho$. By Fubini theorem, we have
\begin{align*}
\int_{\gamma_{r}}\mathcal{F}_{h}(f)(z)\,\diff z & =\int_{\gamma_{r}}\left(\int_{H}f(\omega)e^{-iz\omega}\,\diff\omega\right)\,\diff z\\
 & =\int_{H}f(\omega)\left(\int_{\gamma_{r}}e^{-iz\omega}\,\diff\omega\right)\,\diff z.
\end{align*}
Since $e^{-iz\omega}$ is a generalized entire function of $z$, the
inner integral $\int_{\gamma_{r}}e^{-iz\omega}\,\diff\omega=0$ by
Cauchy-Goursat's theorem (see \cite[Thm. 45]{NuGi24b}). Therefore,
the whole expression is $0$, and by Morera's theorem (see \cite[Thm. 41]{NuGi24b}),
$\mathcal{F}_{h}(f)$ is a GHF. Now, using the Taylor hyperseries
of $\omega\mapsto e^{-iz\omega}$, see Example \ref{exa:exponential},
since $h$ is of logarithmic type and $\omega\in H$, we have
\begin{align*}
\mathcal{F}_{h}(f)(\omega) & =\int_{H}f(\omega)\hypersum{}{\rho}\frac{(-iz\omega)^{n}}{n!}\,\diff\omega\\
 & =\hypersum{}{\rho}\frac{(-iz)^{n}}{n!}\int_{H}f(\omega)\omega^{n}\,\diff\omega,
\end{align*}
where we use \cite[Thm.~75]{Giordano2021} to exchange hyperseries
and integration over the functionally compact set $H$. This proves
that $\mathcal{F}_{h}(f)$ is an entire function.

By Plancherel's identity (see \cite[Thm. 7.11]{MTG}), we have: 
\begin{align*}
\int_{H}\left|\mathcal{F}_{h}(f)\mid_{\RCrealrho}(\omega)\right|^{2}\diff\omega & =2\pi\int_{H}\left|f\right|^{2}\in\RCrealrho.
\end{align*}
Moreover, let $z=x+iy$, then 
\begin{align*}
\left|\mathcal{F}_{h}(f)(z)\right| & =\left|\int_{H}f(\omega)e^{-iz\omega}\,\diff\omega\right|\\
 & \leq\int_{H}\left|f(\omega)\right|\left|e^{-iz\omega}\right|\,\diff\omega\\
 & =\int_{H}\left|f(\omega)\right|e^{y\omega}\,\diff\omega.
\end{align*}
We then apply the Cauchy-Schwarz inequality, which in our setting
relies upon the monotonicity of the integral, Fubini’s theorem, and
Young’s inequality. Since the classical proofs of these underlying
properties can be directly adapted to our framework, the inequality
holds as expected (see \cite[Thm. 3.10]{MTG}). We have:
\begin{align*}
\left|\mathcal{F}_{h}(f)(z)\right| & \leq\int_{H}\left|f(\omega)\right|e^{y\omega}\,\diff\omega\\
 & \leq\left(\int_{H}\left|f(\omega)\right|^{2}\,\diff\omega\right)^{\nicefrac{1}{2}}\left(\int_{H}e^{2h|y|}\,\diff\omega\right)^{\nicefrac{1}{2}}\\
 & \leq e^{h|z|}\left(\int_{H}\left|f(\omega)\right|^{2}\,\diff\omega\right)^{\nicefrac{1}{2}}\left(\int_{H}\diff\omega\right)^{\nicefrac{1}{2}}\\
 & =Ce^{h|z|}.
\end{align*}
\end{proof}
For example, consider the $1$-dimensional Dirac delta $\delta_{1}(x):=\diff\rho^{-1}\mu(\diff\rho x)$
where $\mu:=\mathcal{F}^{-1}(\beta)$ is an entire mollifier defined
as the inverse Fourier transform of any smooth compactly supported
function $\beta\in\mathcal{D}(\R)$ with $\beta(0)=1$ (see \cite[Sec. 5]{NuGi24a}
for more details). We have $\delta_{1}\in\Dgsf(H,\RCcomplexrho)$.
For all $x\in\RCrealrho$, we have $\delta_{1}^{(j)}(x)=\diff\rho^{-j-1}\mu^{(j)}(\diff\rho^{-1}x)$
and
\begin{align*}
\delta_{1}^{(j)}(x) & =\diff\rho^{-j-1}\left[\frac{{\rm d}^{j}}{{\rm d}x^{j}}\left(\frac{1}{2\pi}\int\beta(t)e^{i\rho_{\eps}^{-1}xt}\,\diff t\right)\right]\\
 & =\frac{\diff\rho^{-j-1}}{2\pi}\left[\int\left(i\rho_{\eps}^{-1}t\right)^{j}\beta(t)e^{i\rho_{\eps}^{-1}xt}\,\diff t\right]\\
\left|\delta_{1}^{(j)}(x)\right| & \leq\frac{\diff\rho^{-2j-1}}{2\pi}\int_{-1}^{1}|t|^{j}\beta(t)\,\diff t=:C\left(\diff\rho^{-2}\right)^{j}.
\end{align*}
Thus, the $1$-dimensional Dirac's delta satisfies condition \eqref{eq:tamepolynom},
i.e.~it is uniformly bounded by a tame polynomial. By Riemann-Lebesgue
Lem.~\ref{lem:RiemannLebesgue}, we have
\[
\exists Q\in\N:f=\mathcal{F}_{h}(\delta_{1})|_{\RCrealrho}\in\Dgsf\left(\overline{B_{\diff\rho^{-Q}}(0)}\right).
\]
On the other hand, the Paley-Wiener Thm.~\ref{thm:paleywiener},
yields that $\mathds{1}:=\mathcal{F}_{h}(\delta_{1})$ is an entire
GHF if $h$ is of logarithmic type, hence it is surely \emph{not}
compactly supported because of Liouville Thm.~\ref{thm:Liouville}.
It is possible to prove that $\mathds{1}(x)=1$ for all \emph{finite}
$x\in\RCrealrho$, see \cite[Rem.~5.2]{MTG}. Moreover, it is also
possible to prove that if $f\in\mathcal{S}(\R)$ and $T\in\mathcal{S}'(\R)$,
then $\mathcal{F}(f)=\mathcal{F}(f\cdot\mathds{1})=\iota(\hat{f})$
and $\mathcal{F}_{h}(\iota(T))=\iota(\hat{T})$, where $\hat{(-)}$
is the classical FT and $\iota$ is the embedding of distributions
(see \cite{NuGi24a}).

The converse of Thm.~\ref{thm:paleywiener} remains an open problem.

\section{Comparison with Colombeau theory and conclusions}

The analyticity of Colombeau holomorphic generalized functions (see
e.g. \cite{Aragona2005,Aragona2012,Colombeau1984,Colombeau1984b,Oberguggenberger2007,Vernaeve2008,Juriaans2020}and
references therein) has been studied at different level of generality.
For instance, \cite{Colombeau1984,Colombeau1984b} note that because
holomorphic generalized functions are defined on $\CC$, classical
Taylor series do not allow one to obtain several desired results.
Specifically, \cite[Sec. 7]{Colombeau1984} provides three examples
of holomorphic generalized functions that fail to satisfy the principle
of analytic continuation.

Conversely, a version of Goursat's theorem was proven in \cite{Aragona2012}
as a consequence of the theory of path integration for generalized
functions on generalized subsets, known in \cite{Aragona2012} as
membranes (notably, a membrane is an internal set). This work was
preceded by the theory of discontinuous Colombeau differential calculus
\cite{Aragona2005}, which allows for the arbitrary composition of
suitable classes of generalized functions. However, in \cite{Aragona2012},
path integral are considered only for the composition $f(\gamma_{t})$
with a Colombeau generalized function $f\in\mathcal{G}(\Omega)$,
$\Omega\subseteq\CC$. This composition is defined only if the path
is generated by a history $(\gamma_{\eps})$, so that $(\gamma_{\eps}([0,1]))$
is a pre-membrane, a condition similar to c-boundedness (see \cite[Def.~3.1, 3.2]{Aragona2012}).
Consequently, Goursat's theorem holds only on the set of compactly
supported points, which, we recall, can be defined as: 
\[
\left\langle \Omega\right\rangle _{{\rm fin}}:=\left\{ z\in\left\langle \Omega\right\rangle \mid z\text{ is finite},\exists r\in\R_{>0}:d\left(z,\partial\Omega\right)\geq r\right\} 
\]
(see \cite[Thm. 5.2]{Aragona2005}).

Subsequently, Vernaeve \cite{Vernaeve2008} used an approach equivalent
to GSF theory, but using quotients instead of directly treating generalized
functions as ordinary maps. As established in \cite[Prop. 4.18]{Vernaeve2008}
(recall also Sec.~\ref{sec:Introduction}), while each complex analytic
Colombeau generalized function can be expressed as a Taylor series,
this expansion is valid only within an infinitesimal neighborhood
of each point.

In general, we can conclude this article by considering that in the
study of PDE, the class of analytic functions is sometimes described
as a too rigid set of solutions. In spite of their good properties
with respect to algebraic operations, composition, differentiation,
integration, inversion, etc., this rigidity is essentially well represented
by the identity principle that necessarily excludes e.g.~solitons
with compact support or interesting generalized functions. Thanks
to the identity principle \ref{thm:identity}, we can state that this
rigidity stems from two main factors: the exclusion of non-Archimedean
numbers from classical analysis and the non-symmetry of analytic continuation
for GHF. The introduction of hyperseries allows one to recover all
these features including also interesting non trivial generalized
functions and compactly supported functions. This development paves
the way for an interesting generalization of the Cauchy-Kowalevski
theorem for GHF that we intend to study in the next work of this series.

\section*{Acknowledgements}

Financial support for an Ernst-Mach PhD grant (Reference number =
MPC-2021-00491) issued by the Federal Ministry of Education, Science
and Research (BMBWF) through the Austrian Agency for Education and
Internationalization (OeAD-Gmbh) within the framework ASEA-UNINET
(https://asea-uninet.org/) for S. Nugraheni is gratefully acknowledged.
This research was also funded in whole or in part by the Austrian
Science Fund (FWF) P34113, 10.55776/P33945, 10.55776/P33538. For open
access purposes, the author has applied a CC BY public copyright license
to any author-accepted manuscript version arising from this submission.

\end{document}